\newtheorem{theorem}{Theorem}[section]
\newtheorem{lemma}{Lemma}
\newtheorem{false statement}{False statement}
\theoremstyle{definition}
\newtheorem{claim}{Claim}
\newtheorem{problem}{Problem}
\newcommand{\bL}{\b{\textit{L}}}
\newcommand{\bN}{\b{\textit{N}}}
\newcommand{\bix}{\textbf{\textit{x}}}
\begin{document}

\title{\bf\Large Spectral analogues of Erd\H{o}s' and Moon-Moser's theorems on Hamilton cycles}

\date{}

\author{Binlong Li\thanks{Department of Applied Mathematics,
Northwestern Polytechnical University, Xi'an, Shaanxi 710072, P. R.
China. European Centre of Excellence NTIS, Plze\v n, 30614, Czech
Republic. This author is
supported by the project NEXLIZ -- CZ.1.07/2.3.00/30.0038.}~ and Bo
Ning\thanks{Corresponding author. Center for Applied Mathematics,
Tianjin University, Tianjin, 300072, P. R. China. email:
bo.ning@tju.edu.cn.}}

\maketitle

\begin{abstract}
In 1962, Erd\H{o}s gave a sufficient condition for Hamilton cycles
in terms of the vertex number, edge number, and minimum degree of graphs which
generalized Ore's theorem. One year
later, Moon and Moser gave an analogous result for Hamilton cycles in
balanced bipartite graphs. In this paper we present the spectral
analogues of Erd\H{o}s' theorem and Moon-Moser's theorem,
respectively. Let $\mathcal{G}_n^k$ be the class of non-Hamiltonian
graphs of order $n$ and minimum degree at least $k$. We determine
the maximum (signless Laplacian) spectral radius of graphs in
$\mathcal{G}_n^k$ (for large enough $n$), and the minimum (signless
Laplacian) spectral radius of the complements of graphs in
$\mathcal{G}_n^k$. All extremal graphs with the maximum (signless
Laplacian) spectral radius and with the minimum (signless Laplacian)
spectral radius of the complements are determined, respectively. We
also solve similar problems for balanced bipartite graphs and the
quasi-complements.
\medskip

\noindent {\bf Keywords:} Hamilton cycle; spectral radius; signless
Laplacian spectral radius; complement; quasi-complement

\smallskip
\noindent {\bf Mathematics Subject Classification (2010): 05C50, 15A18, 05C38}
\end{abstract}

\section{Introduction}

For a graph $G$, we denote by $n(G)$ the order of $G$, by $e(G)$ the
edge number of $G$, by $\delta(G)$ the minimum degree of $G$ and by
$\omega(G)$ the clique number of $G$. For two disjoint graphs $G_1$
and $G_2$, the \emph{union} of $G_1$ and $G_2$, denoted by
$G_1+G_2$, is defined as $V(G_1+G_2)=V(G_1)\cup V(G_2)$ and
$E(G_1+G_2)=E(G_1)\cup E(G_2)$; and the \emph{join} of $G_1$ and
$G_2$, denoted by $G_1\vee G_2$, is defined as $V(G_1\vee
G_2)=V(G_1)\cup V(G_2)$, and $E(G_1\vee G_2)=E(G_1+G_2)\cup
\{xy:x\in V(G_1),y\in V(G_2)\}$. The union of $k$ disjoint copies of
the same graph $G$ is denoted by $kG$.

Let $G$ be a graph, $A$ be the adjacency matrix of $G$ and $D$ be the
degree matrix of $G$. Let $Q=A+D$ be the signless Laplacian
matrix of $G$. The \emph{spectral radius} of $G$, denoted by
$\rho(G)$, is the largest value of eigenvalues of $A$. The
\emph{signless Laplacian spectral radius} of $G$, denoted by $q(G)$,
is the largest value of eigenvalues of $Q$.

A graph $G$ is \emph{Hamiltonian} (\emph{traceable}) if it contains
a Hamilton cycle (Hamilton path), i.e., a cycle (path) containing
all vertices of $G$. Determining whether a given graph is
Hamiltonian or not is an old problem in graph theory. This problem
was proved to be an NP-hard problem \cite{Ka}. For a long time,
graph theorists have been interested in finding sufficient conditions
of Hamilton cycles.
\subsection{Hamiltonicity and traceability of graphs}

In extremal graph theory, a natural problem on Hamilton cycles is,
how many edges can guarantee the existence of a Hamilton cycle in a
graph of order $n$? Ore \cite{O} showed that the condition $e(G)\geq
\binom{n-1}{2}+2$ is the answer.

\begin{theorem}[Ore \cite{O}]\label{ThOr}
Let $G$ be a graph of order $n$. If $$e(G)>\binom{n-1}{2}+1,$$ then
$G$ is Hamiltonian.
\end{theorem}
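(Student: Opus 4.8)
The plan is to prove the contrapositive: I will show that every non-Hamiltonian graph $G$ on $n \ge 3$ vertices satisfies $e(G) \le \binom{n-1}{2}+1$. Since adding edges never decreases the edge count, I may enlarge $G$ to an edge-maximal non-Hamiltonian graph $G'$ on the same vertex set (such a $G'$ exists and is not complete, because $K_n$ is Hamiltonian for $n\ge 3$), so it suffices to bound $e(G')$. The key structural consequence of maximality is a degree-sum inequality of Ore type: for every pair of non-adjacent vertices $u,v$ of $G'$, the graph $G'+uv$ is Hamiltonian, so $G'$ contains a Hamilton path $v_1v_2\cdots v_n$ with $v_1=u$ and $v_n=v$. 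If $u$ were adjacent to some $v_{i+1}$ while $v$ were adjacent to $v_i$, then $v_1\cdots v_i v_n v_{n-1}\cdots v_{i+1}v_1$ would be a Hamilton cycle of $G'$, a contradiction; hence the neighbours of $v$ and the predecessors of the neighbours of $u$ are disjoint subsets of $\{v_1,\dots,v_{n-1}\}$, which yields $\deg_{G'}(u)+\deg_{G'}(v)\le n-1$.

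With this inequality in hand, I will run a counting argument anchored at a vertex $w$ of minimum degree $d=\delta(G')$. Write $N=N_{G'}(w)$ and $S=V(G')\setminus(N\cup\{w\})$, so $|N|=d$ and $|S|=n-1-d$. Every vertex $u\in S$ is non-adjacent to $w$, so the degree-sum bound gives $\deg_{G'}(u)\le n-1-d$; the $d$ vertices of $N$ and the vertex $w$ contribute at most $n-1$ each and exactly $d$ respectively. Summing the degrees over all vertices,
\[
2e(G') \;\le\; d + d(n-1) + (n-1-d)^2 \;=\; d^2+(2-n)d+(n-1)^2 \;=:\; f(d).
\]

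It remains to maximise $f$ over the admissible range of $d$ and to check that the maximum equals $2\binom{n-1}{2}+2=n^2-3n+4$. The range is controlled from below by $d\ge 0$ (the case $d=0$ is immediate, giving $e(G')\le\binom{n-1}{2}$) and from above by Dirac's theorem: since $G'$ is non-Hamiltonian, $\delta(G')<n/2$, so $d\le\lceil n/2\rceil-1$. Because $f$ is an upward parabola with vertex at $d=(n-2)/2$, it is decreasing on $[1,(n-2)/2]$, and a short symmetry check at the right endpoint handles odd $n$; in every case $f(d)\le f(1)=n^2-3n+4$. Therefore $e(G)\le e(G')\le\binom{n-1}{2}+1$, which contradicts the hypothesis $e(G)>\binom{n-1}{2}+1$, and the equality case $d=1$ recovers the extremal graph $K_{n-1}$ with a pendant edge. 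I expect the main obstacle to be the Ore-type rotation argument of the first paragraph, namely verifying carefully that no relabelling of the Hamilton path produces a Hamilton cycle; the quadratic optimisation, though it requires attention to the parity of $n$ near the upper end of the range, is routine.
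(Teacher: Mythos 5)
The paper states this theorem as a known result of Ore \cite{O} and gives no proof, so there is no internal argument to compare with; judged on its own, your proof is correct. The rotation argument on an edge-maximal non-Hamiltonian supergraph $G'$ correctly yields $d_{G'}(u)+d_{G'}(v)\le n-1$ for every non-adjacent pair $u,v$; the degree count $2e(G')\le d+d(n-1)+(n-1-d)^2=f(d)$ anchored at a minimum-degree vertex is valid; and the optimization of $f$ over $1\le d\le\lceil n/2\rceil-1$ (with $d=0$ immediate, and the odd-$n$ endpoint folded back by the symmetry $f\left(\tfrac{n-1}{2}\right)=f\left(\tfrac{n-3}{2}\right)$ of the parabola about $\tfrac{n-2}{2}$) indeed gives $f(d)\le f(1)=n^2-3n+4=2\binom{n-1}{2}+2$, as you claim. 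Two observations. First, the appeal to Dirac's theorem \cite{D} is dispensable: since $G'$ is not complete it has a non-adjacent pair, and your own inequality applied to that pair already forces $2\delta(G')\le n-1$, which bounds the range of $d$ without invoking any outside theorem. Second, the classical (and much shorter) route skips the global degree sum and the quadratic optimization entirely: once a single non-adjacent pair $u,v$ with $d(u)+d(v)\le n-1$ is in hand, split the edges of $G'$ into those incident to $\{u,v\}$ and the rest, giving $e(G')\le \left(d(u)+d(v)\right)+\binom{n-2}{2}\le (n-1)+\binom{n-2}{2}=\binom{n-1}{2}+1$ in one line. Your minimum-degree refinement costs the parity analysis at the right endpoint, but in exchange it exhibits the equality cases ($d=1$, i.e.\ $K_{n-1}$ with a pendant edge, plus the second extremum at $d=2$ when $n=5$), which connects nicely to the extremal graphs $L_n^k$ and $N_n^k$ discussed after Theorems \ref{ThOr} and \ref{ThEr} in the paper.
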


Note that the graph obtained from $K_{n-1}$ by adding a pendent edge
has $\binom{n-1}{2}+1$ edges but is non-Hamiltonian. This example
shows the condition in Theorem \ref{ThOr} is the best possible.
However, the extremal graph has a vertex of degree 1, and is
trivially non-Hamiltonian. In 1962, Erd\H{o}s \cite{E}
generalized Ore's theorem by imposing minimum degree as a new
parameter.

\begin{theorem}[Erd\H{o}s \cite{E}]\label{ThEr}
Let $G$ be a graph of order $n$. If $\delta(G)\geq k$, where $1\leq
k\leq(n-1)/2$, and
$$e(G)>\max\left\{\binom{n-k}{2}+k^2,\binom{\lceil
(n+1)/2\rceil}{2}+\left\lfloor
\frac{n-1}{2}\right\rfloor^2\right\},$$ then $G$ is Hamiltonian.
\end{theorem}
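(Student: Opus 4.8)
The plan is to prove Erdős' theorem by the classical closure/extremal-counting method. The key structural fact underlying Ore-type results is this: if $G$ is a non-Hamiltonian graph on $n$ vertices and $uv \notin E(G)$ is a non-adjacent pair, then in a maximal non-Hamiltonian graph we have $d(u) + d(v) \le n-1$. Indeed, if $G$ is edge-maximal non-Hamiltonian, adding $uv$ creates a Hamilton cycle, which means $G + uv$ contains a Hamilton path from $u$ to $v$; writing this path as $u = x_1 x_2 \cdots x_n = v$, the standard rotation argument shows that if $u$ is adjacent to some $x_{i+1}$ then $v$ cannot be adjacent to $x_i$ (else $x_1 \cdots x_i x_n x_{n-1} \cdots x_{i+1} x_1$ would be a Hamilton cycle already in $G$). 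Counting yields $d(u) + d(v) \le n-1$.

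The main approach I would take is to assume $G$ is edge-maximal non-Hamiltonian with $\delta(G) \ge k$ and maximize $e(G)$, then show that any such $G$ has at most $\max\{\binom{n-k}{2}+k^2, \binom{\lceil(n+1)/2\rceil}{2}+\lfloor(n-1)/2\rfloor^2\}$ edges. First I would pick a vertex $u$ of minimum degree $d = d(u)$; since $\delta(G) \ge k$ we have $d \ge k$. The non-adjacency bound above forces every vertex $v$ not adjacent to $u$ to satisfy $d(v) \le n-1-d$. Let $V_1$ be the set of the $n-1-d$ vertices not adjacent to $u$ (other than $u$ itself), each of degree at most $n-1-d$, and let the remaining vertices (including $u$ and its $d$ neighbors) form a set on which we bound the edge count by that of a clique. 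The edge count then splits: edges within the low-degree part, edges within the high-part, and the crossing edges, all controlled by $d(v) \le n-1-d$ for $v$ in the low part. Summing these contributions gives a bound that is a function $f(d)$ of the minimum degree $d$.

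The crucial step, and the main obstacle, is the extremal analysis of the resulting function $f(d) = \binom{n-d}{2} + d^2$ (coming from the extremal graph $K_d \vee (K_{n-2d} + \overline{K_d})$, i.e., the two vertices of degree $d$ pattern) over the range $k \le d \le \lfloor(n-1)/2\rfloor$. This $f$ is a convex quadratic in $d$, so its maximum on the interval is attained at one of the endpoints: at $d = k$ it gives $\binom{n-k}{2}+k^2$, and at $d = \lfloor(n-1)/2\rfloor$ it gives $\binom{\lceil(n+1)/2\rceil}{2} + \lfloor(n-1)/2\rfloor^2$. Hence $e(G) \le \max$ of these two values, which is exactly the bound in the theorem. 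I would need to verify carefully that the extremal configuration really achieves $\binom{n-d}{2}+d^2$ edges and that no other distribution of degrees in the high-degree part can exceed the clique bound, as well as handle the boundary behavior at $d = \lfloor(n-1)/2\rfloor$ where the low-degree and high-degree sets interact.

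Concluding, since assuming $e(G) > \max\{\cdots\}$ contradicts the edge bound for any edge-maximal non-Hamiltonian $G$ with $\delta(G)\ge k$, the hypothesis forces $G$ to be Hamiltonian. I would also exhibit the two extremal graphs to show the bound is sharp: $K_k \vee (K_{n-2k} + \overline{K_k})$ attaining $\binom{n-k}{2}+k^2$, and the balanced split graph attaining the second term, confirming that the maximum of the two is genuinely required.
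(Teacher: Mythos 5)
The paper never proves Theorem \ref{ThEr} at all (it is quoted from Erd\H{o}s' 1962 paper), so your proposal must stand on its own merits. Your overall plan --- pass to an edge-maximal non-Hamiltonian counterexample, use Ore's bound $d(u)+d(v)\le n-1$ for nonadjacent pairs, derive an edge bound $g(d)=\binom{n-d}{2}+d^2$ for some $d$ with $k\le d\le\lfloor(n-1)/2\rfloor$, and finish by convexity of $g$ (maximum at the endpoints, which evaluate exactly to the two terms in the theorem) --- is indeed the classical route, and your rotation argument and endpoint evaluation are correct. But the counting step that should produce $g(d)$ has a genuine gap. You take $u$ of \emph{minimum} degree $d=d(u)$ and let $V_1$ be its $n-1-d$ non-neighbors; Ore's bound then gives only $d(w)\le n-1-d$ for $w\in V_1$, which is a weak bound precisely because $d$ is small, while the set you can bound by a clique, $N[u]$, has only $d+1$ vertices. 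The best your decomposition can yield is
\[
e(G)\ \le\ \binom{d+1}{2}+\sum_{w\in V_1}d(w)\ \le\ \binom{d+1}{2}+(n-1-d)^2,
\]
which is far weaker than $\binom{n-d}{2}+d^2$; for $d=k=1$ it equals $1+(n-2)^2>\binom{n}{2}$, i.e.\ it is vacuous. The function $f(d)=\binom{n-d}{2}+d^2$ that you then analyze is asserted (``coming from the extremal graph''), not derived from your counting, and that is exactly the step that fails: your argument never produces a large set of genuinely \emph{low}-degree vertices.

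The missing idea is the choice of the pair. Among all nonadjacent pairs choose $u,v$ with $d(u)+d(v)$ \emph{maximum}, say $d(u)\le d(v)$, and set $d=d(u)$. Then $k\le d\le\lfloor(n-1)/2\rfloor$ (from $2d\le d(u)+d(v)\le n-1$); every non-neighbor $w$ of the \emph{high}-degree vertex $v$ satisfies $d(w)\le d(u)=d$, since otherwise the pair $\{w,v\}$ would beat $\{u,v\}$; and the number $a=n-1-d(v)$ of such non-neighbors satisfies $a\ge d$, because $d(v)\le n-1-d$. This gives at least $d$ vertices of degree at most $d$, and hence
\[
e(G)\ \le\ \binom{n-a}{2}+a\,d\ \le\ \binom{n-d}{2}+d^2,
\]
where the second inequality holds because the increment of the middle expression in $a$ is $d-(n-a-1)\le 0$ throughout the range $d\le a\le n-1-d$. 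From here your convexity-and-endpoints analysis applies verbatim and contradicts the hypothesis $e(G)>\max\bigl\{g(k),\,g(\lfloor(n-1)/2\rfloor)\bigr\}$. Your sharpness examples ($N_n^k=K_k\vee(K_{n-2k}+kK_1)$ and the balanced split graph) are fine as stated.
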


By Dirac's theorem \cite{D} which states that every graph of
order $n\geq 3$ is Hamiltonian if $\delta(G)\geq n/2$, we can see
the condition $k\leq(n-1)/2$ in Theorem \ref{ThEr} is reasonable.
Furthermore, by simple computation, we know that if $n\geq 6k-2$,
then $\binom{n-k}{2}+k^2\geq\binom{n-l}{2}+l^2$, where
$l=\lfloor(n-1)/2\rfloor$. So Theorem \ref{ThEr} implies that every
graph of order $n\geq 6k-2$ with $\delta(G)\geq k$ and
$e(G)>\binom{n-k}{2}+k^2$, is Hamiltonian.

In this paper, we define, for $1\leq k\leq(n-1)/2$,
$$L^k_n=K_1\vee(K_k+K_{n-k-1}) \mbox{ and } N^k_n=K_k\vee(K_{n-2k}+kK_1).$$
Note that $L^1_{n}=N^1_{n}$. We remark that the graph $N^k_n$
($n\geq 6k-2$) and the graph $N_n^{\lfloor(n-1)/2\rfloor}$ ($n\leq
6k-3$) show that the condition in Theorem \ref{ThEr} is sharp.

We denote by $\bL_n^k$ and $\bN_n^k$ the graphs obtained from
$L_{n+1}^{k+1}$ and $N_{n+1}^{k+1}$, respectively, by deleting one
vertex of degree $n$, i.e., for $0\leq k\leq n/2-1$,
$$\bL^k_n=K_{k+1}+K_{n-k-1} \mbox{ and } \bN^k_n=K_k\vee(K_{n-2k-1}+(k+1)K_1).$$
In addition, we set
$$\mathcal{H}_n=\{G: K_{\lceil n/2\rceil-1,\lfloor n/2\rfloor+1}\subseteq G\subseteq
K_{\lceil n/2\rceil-1}\vee(\lfloor n/2\rfloor+1)K_1\}.$$ Note that
all graphs in $\mathcal{H}_n$ have the complements with the same
(signless Laplacian) spectral radius. Also note that every graph in
$\mathcal{H}_n$ is a subgraph of $N_n^{(n-1)/2}$ for odd $n$, and a
subgraph of $\bN_n^{n/2-1}$ for even $n$.

\begin{center}
\setlength{\unitlength}{0.8pt} \small
\begin{picture}(535,140)
\thicklines

\put(0,0){\put(70,80){\thinlines\circle{100}} \put(60,75){$K_{n-4}$}
\put(127.9,80){\put(-22.9,0){\circle*{4}} \put(0,31.6){\circle*{4}}
\put(0,-31.6){\circle*{4}} \put(37.1,19.5){\circle*{4}}
\put(37.1,-19.5){\circle*{4}} \put(0,-31.6){\line(0,1){63.2}}
\put(37.1,-19.5){\line(0,1){39}}
\qbezier(-22.9,0)(-11.5,15.8)(0,31.6)
\qbezier(-22.9,0)(7.1,9.8)(37.1,19.5)
\qbezier(37.1,19.5)(18.6,25.6)(0,31.6)
\qbezier(37.1,19.5)(18.6,-5.6)(0,-31.6)
\qbezier(-22.9,0)(-11.5,-15.8)(0,-31.6)
\qbezier(-22.9,0)(7.1,-9.8)(37.1,-19.5)
\qbezier(37.1,-19.5)(18.6,-25.6)(0,-31.6)
\qbezier(37.1,-19.5)(18.6,5.6)(0,31.6)} \put(90,10){$L_n^4$}}

\put(175,0){\put(70,80){\thinlines\circle{100}}
\put(60,75){$K_{n-4}$}
\put(135,80){\multiput(-30,-30)(0,20){4}{\circle*{4}}
\multiput(30,-30)(0,20){4}{\circle*{4}}
\put(-30,-30){\line(1,0){60}} \put(-30,-30){\line(3,1){60}}
\put(-30,-30){\line(3,2){60}} \put(-30,-30){\line(1,1){60}}
\put(-30,-10){\line(3,-1){60}} \put(-30,-10){\line(1,0){60}}
\put(-30,-10){\line(3,1){60}} \put(-30,-10){\line(3,2){60}}
\put(-30,10){\line(3,1){60}} \put(-30,10){\line(1,0){60}}
\put(-30,10){\line(3,-1){60}} \put(-30,10){\line(3,-2){60}}
\put(-30,30){\line(1,0){60}} \put(-30,30){\line(3,-1){60}}
\put(-30,30){\line(3,-2){60}} \put(-30,30){\line(1,-1){60}}}
\put(90,10){$N_n^4$}}

\put(350,0){\put(70,80){\thinlines\circle{100}}
\put(60,75){$K_{n-4}$}
\put(135,80){\multiput(-30,-20)(0,20){3}{\circle*{4}}
\multiput(30,-30)(0,20){4}{\circle*{4}}
\put(-30,-20){\line(6,-1){60}} \put(-30,-20){\line(6,1){60}}
\put(-30,-20){\line(2,1){60}} \put(-30,-20){\line(6,5){60}}
\put(-30,0){\line(2,-1){60}} \put(-30,0){\line(6,-1){60}}
\put(-30,0){\line(6,1){60}} \put(-30,0){\line(2,1){60}}
\put(-30,20){\line(6,1){60}} \put(-30,20){\line(6,-1){60}}
\put(-30,20){\line(2,-1){60}} \put(-30,20){\line(6,-5){60}}}
\put(90,10){$\bN_n^3$}}

\end{picture}

\small Fig. 1. Graphs $L_n^4$, $N_n^4$ and $\bN_n^3$.
\end{center}

Fiedler and Nikiforov \cite{FN} published their
important work on spectral conditions for Hamilton cycles and Hamilton
paths, which stimulated many subsequent researches on this topic.

\begin{theorem}[Fiedler and Nikiforov \cite{FN}]\label{ThFiNi}
Let $G$ be a graph of order $n$. \\
(1) If $\rho(G)\geq n-2$, then $G$ is traceable unless $G=\bN_n^0$.\\
(2) If $\rho(G)>n-2$, then $G$ is Hamiltonian unless $G=N^1_{n}$.
\end{theorem}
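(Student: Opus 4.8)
The plan is to convert the spectral hypothesis into an edge-count hypothesis and then invoke the extremal edge theorems of Ore and Erd\H{o}s recorded above. The one external analytic input I would use is Hong's classical bound: for a connected graph $G$ with $n$ vertices and $m$ edges, $\rho(G)\le\sqrt{2m-n+1}$, equivalently $2m\ge\rho(G)^2+n-1$. I would also use three elementary facts: the spectral radius of a disconnected graph equals the largest spectral radius among its components; adding an edge to a connected graph strictly increases $\rho$; and $\rho(K_{n-1})=n-2$. Traceability I would reduce to Hamiltonicity through the standard device that $G$ is traceable if and only if the join $G\vee K_1$ is Hamiltonian.

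First I would settle the disconnected case, which produces the exceptional graph in part~(1). If $G$ is disconnected, its largest component has at most $n-1$ vertices, so $\rho(G)\le n-2$, with equality forcing that component to be $K_{n-1}$ and the remaining vertex to be isolated, i.e. $G=\bN^0_n=K_{n-1}+K_1$. Hence no disconnected graph has $\rho(G)>n-2$, and the only disconnected candidate with $\rho(G)\ge n-2$ is the non-traceable graph $\bN^0_n$. For connected $G$ with $\rho(G)\ge n-2$, Hong's bound gives $2m\ge(n-2)^2+n-1$, whence $m\ge\binom{n-1}{2}+1>\binom{n-1}{2}$; then $G\vee K_1$ has order $n+1$ and more than $\binom{n}{2}+1$ edges, so it is Hamiltonian by Theorem~\ref{ThOr}, and thus $G$ is traceable. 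Part~(1) follows.

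For part~(2) the same discussion forces $G$ connected, and Hong's bound again yields $m\ge\binom{n-1}{2}+1$. Assume $G$ is non-Hamiltonian. If $m\ge\binom{n-1}{2}+2$ then Theorem~\ref{ThOr} makes $G$ Hamiltonian, so $m=\binom{n-1}{2}+1$, i.e. $e(\overline G)=n-2$. Now I would split on $\delta(G)$. If $\delta(G)\ge 2$, then $e(G)=\binom{n-1}{2}+1=\binom{n-2}{2}+(n-1)>\binom{n-2}{2}+2^2$ for large $n$; since this term then dominates the maximum in Theorem~\ref{ThEr} with $k=2$, Erd\H{o}s' theorem would make $G$ Hamiltonian, a contradiction. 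Hence $\delta(G)=1$ (it cannot be $0$, as $G$ is connected). A vertex of degree $1$ in $G$ has degree $n-2$ in $\overline G$, so it is incident to all $n-2$ edges of $\overline G$; therefore $\overline G=K_{1,n-2}+K_1$ and $G=N^1_n$. Conversely, $N^1_n$ is non-Hamiltonian (it has a vertex of degree $1$) yet connected and strictly contains $K_{n-1}$, so $\rho(N^1_n)>\rho(K_{n-1})=n-2$; it is the genuine exception.

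The step I expect to be hardest is the boundary case $m=\binom{n-1}{2}+1$ in part~(2): a priori several non-Hamiltonian graphs share this edge count, and all but $N^1_n$ must be excluded under the very tight spectral constraint. Splitting on $\delta(G)$ is what tames this, offloading the dense regime $\delta\ge 2$ onto Erd\H{o}s' theorem and leaving only $\delta(G)=1$, which the complement's edge count pins down uniquely. This reduction needs $n$ large enough for the $\binom{n-2}{2}+k^2$ term in Theorem~\ref{ThEr} to dominate the maximum; the finitely many smaller orders can be verified directly.
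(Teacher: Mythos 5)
This statement is quoted by the paper from Fiedler--Nikiforov \cite{FN} and is not proved anywhere in the paper, so there is no in-house proof to compare against; your proposal must stand on its own, and it essentially does. Your route (Hong's inequality $\rho\le\sqrt{2m-n+1}$ for graphs with no isolated vertices, plus Ore-type edge counting) is in fact the same basic strategy as the original proof in \cite{FN}; where you genuinely differ is in the boundary case $m=\binom{n-1}{2}+1$ of part (2). Fiedler and Nikiforov dispose of it by invoking the known characterization of non-Hamiltonian graphs attaining $\binom{n-1}{2}+1$ edges (namely $N^1_n$ and, for $n=5$, the graph $K_2\vee 3K_1=N_5^2$), whereas you re-derive that characterization from results stated in this paper: splitting on $\delta(G)$, killing $\delta\ge 2$ via Erd\H{o}s' theorem (Theorem \ref{ThEr} with $k=2$), and pinning down $\delta=1$ through the complement, which has exactly $n-2$ edges all incident to one vertex. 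That substitution is sound, and the rest of your argument checks out: the disconnected case correctly isolates $\bN_n^0$ in part (1), the parity/integrality step $2m\ge (n-2)^2+n-1\Rightarrow m\ge\binom{n-1}{2}+1$ is valid, and $\rho(N^1_n)>n-2$ follows from proper containment of $K_{n-1}$ in a connected graph.

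One concrete caution about the step you yourself flagged as needing large $n$: the Erd\H{o}s reduction works for all $n\ge 6$ (the inequality $\binom{n-2}{2}+n-1>\max\{\binom{n-2}{2}+4,\ \binom{\lceil(n+1)/2\rceil}{2}+\lfloor(n-1)/2\rfloor^2\}$ holds there), but it genuinely fails at $n=5$, where $e(G)=7$ only ties the Erd\H{o}s bound. This failure is not an artifact: $N_5^2=K_2\vee 3K_1$ is a non-Hamiltonian graph with $\delta=2$ and $7=\binom{4}{2}+1$ edges, and it escapes your contradiction. It is excluded from the theorem only because $\rho(K_2\vee 3K_1)=3=n-2$ exactly, not strictly greater; so the ``direct verification at small orders'' you defer to must actually compute this spectral radius (or otherwise rule this graph out) rather than being a formality. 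With that $n\le 5$ check written out, the proof is complete.
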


\begin{theorem}[Fiedler and Nikiforov \cite{FN}]\label{ThFiNi'}
Let $G$ be a graph of order $n$.\\
(1) If $\rho(\overline{G})\leq\sqrt{n-1}$, then $G$ is traceable
unless $G=\bL^0_{n}$. \\
(2) If $\rho(\overline{G})\leq\sqrt{n-2}$, then $G$ is Hamiltonian
unless $G=L^1_{n}$.
\end{theorem}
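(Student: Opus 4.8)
The plan is to prove both parts by contraposition, in the equivalent form: if $G$ is non-traceable (resp. non-Hamiltonian) then $\rho(\overline{G})\ge\sqrt{n-1}$ (resp. $\ge\sqrt{n-2}$), with equality only for the stated exceptional graph. Write $H=\overline{G}$ and $d(v)=d_H(v)$. First I would reduce to the edge-maximal case: since deleting an edge of $H$ cannot increase $\rho(H)$, and deleting an edge of $H$ is the same as adding an edge to $G$, the quantity $\rho(\overline{G})$ is minimized over non-Hamiltonian (resp. non-traceable) graphs by the edge-maximal such $G$, so I may assume that adding any edge to $G$ creates a Hamilton cycle (resp. path). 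By the Bondy--Chv\'atal closure lemma together with Ore's classical degree condition, such a maximal $G$ satisfies $d_G(u)+d_G(v)\le n-1$ (resp. $\le n-2$) for every non-adjacent pair $u,v$. Passing to the complement via $d(v)=n-1-d_G(v)$, this says precisely that every edge $uv$ of $H$ obeys $d(u)+d(v)\ge n-1$ (resp. $\ge n$).

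The core is then a spectral lemma: if every edge of a graph $H$ on $n$ vertices has endpoint-degree-sum at least $s$, then $\rho(H)\ge\sqrt{s-1}$, with the star as the unique extremal graph. Here I would use the walk-counting bound obtained from the Rayleigh quotient of $A(H)^2$ at the all-ones vector,
$$\rho(H)^2\ge\frac{\mathbf{1}^{\top}A(H)^2\mathbf{1}}{n}=\frac{1}{n}\sum_{v}d(v)^2=\frac{1}{n}\sum_{uv\in E(H)}\bigl(d(u)+d(v)\bigr)\ge\frac{s\,m}{n},$$
where $m=e(H)$. For part (1) ($s=n$) this gives $\rho(H)^2\ge m$, so $m\ge n-1$ finishes the case; and the edge condition forces a vertex of degree at least $n/2$, whence a short estimate of $\sum_v d(v)$ shows $m\ge n-1$ in fact always holds for large $n$. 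For part (2) ($s=n-1$), whenever $m\ge n-1$ the bound yields $\rho(H)^2\ge(n-1)^2/n>n-2$ with strict inequality, disposing of that range.

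The remaining, and genuinely tight, case is $m\le n-2$ in part (2) (and the boundary $m=n-1$ in part (1)). Here I would combine $\Delta(H)\le m$ with the inequality $2m\ge\Delta(n-\Delta)$, obtained by summing degrees over the neighbourhood of a maximum-degree vertex, to force $\Delta(H)=n-2$ (resp. $n-1$). The star-subgraph bound $\rho(H)\ge\sqrt{\Delta}$ then delivers $\rho(H)\ge\sqrt{n-2}$ (resp. $\sqrt{n-1}$), and the equality characterization for that bound---a connected graph attains $\rho=\sqrt{\Delta}$ only if it is $K_{1,\Delta}$---pins the extremal $H$ down to $K_{1,n-2}\cup K_1$ (resp. $K_{1,n-1}$), that is $G=L^1_n$ (resp. $G=\bL^0_n$). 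The main obstacle is exactly this equality analysis: I must verify that no other graph meeting the edge condition reaches the threshold, rule out ties created by extra components (an edge outside the star's component would itself need degree-sum $\ge n-1$, impossible in a small disjoint part), and recover strictness in the reduction step so that any non-maximal $G$ yields $\rho(\overline{G})$ strictly above the threshold. Finally, a handful of small values of $n$, where the counting inequalities degrade, would be settled by direct inspection.
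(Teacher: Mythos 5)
Your proposal is correct in outline, but there is nothing in the paper to compare it against directly: Theorem~\ref{ThFiNi'} is quoted from Fiedler and Nikiforov \cite{FN} without proof. The fair comparison is with the paper's proof of its generalization, Theorem~\ref{ThrhoGC}, whose cases $k=0$ and $k=1$ essentially recover parts (1) and (2). Both arguments start the same way --- reduce to a non-Hamiltonian (non-traceable) graph to which no edge can be added (you via edge-maximality, the paper via the Bondy--Chv\'atal closure, Theorem~\ref{ThBoCh}), and translate the failed Ore condition into the degree-sum condition $d_H(u)+d_H(v)\ge n-1$ (resp.\ $\ge n$) on every edge $uv$ of $H=\overline{G}$ --- but the spectral halves diverge. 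The paper bounds $\rho(H)\ge\min_{uv\in E(H)}\sqrt{d_H(u)d_H(v)}$ (Theorem~\ref{ThBeZh}), uses the equality case of that bound (a regular or semi-regular bipartite component) to upgrade the degree-sum inequality to an equality for all nonadjacent pairs, and then invokes the Ainouche--Christofides structure theorem (Theorem~\ref{ThAiCh}) to identify the extremal graphs. You instead use the walk/Rayleigh bound $\rho(H)^2\ge\frac{1}{n}\sum_v d_H(v)^2\ge s\,e(H)/n$ plus elementary counting, so that in the tight cases $H$ has so few edges that it is forced to be exactly $K_{1,n-1}$ (resp.\ $K_{1,n-2}+K_1$), i.e.\ $G=\bL_n^0$ (resp.\ $G=L_n^1$). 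Your route is more elementary and self-contained: it needs neither a Berman--Zhang-type lemma nor the nontrivial structural theorem of \cite{AC}. Its limitation is that it exploits the fact that for $k\in\{0,1\}$ the extremal complements are stars; for general minimum degree $k$, where the extremal complement is $K_{k,n-k-1}+K_1$ and the family $\mathcal{H}_n$ also appears, edge counting no longer isolates the extremal graphs, and that is precisely what the paper's semi-regularity-plus-\cite{AC} machinery is built to handle.

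One bookkeeping slip to fix: summing degrees over the closed neighbourhood of a maximum-degree vertex gives $2e(H)\ge\Delta+\Delta(s-\Delta)=\Delta(s+1-\Delta)$; the form $2e(H)\ge\Delta(n-\Delta)$ you wrote is only the $s=n-1$ instance. In part (1), where $s=n$ and the boundary case is $e(H)=n-1$, your weaker form yields only $\Delta\ge n-2$, and $\Delta=n-2$ then needs an extra line to exclude (the one edge outside the star would have degree sum at most $4<n$); with the correct form, $\Delta(n+1-\Delta)\le 2n-2$ gives $\Delta\le 2$ or $\Delta\ge n-1$ directly, and $\Delta\ge n/2$ finishes. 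This is a slip in the stated inequality, not in the method.
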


Fiedler and Nikiforov's theorems can be seen as spectral analogues
of Ore's theorem. Motivated by this fact, our first aim of this
paper is to give spectral analogues of Erd\H{o}s' theorem, i.e., to
replace the ¡°edge number condition¡± by ¡°spectral condition¡±
(together with minimum degree condition) to guarantee the existence
of Hamilton cycles (Hamilton paths) in graphs. Our first problem can
be stated as follows.

\begin{problem}\label{PrrhoGGC}
Among all non-Hamiltonian graphs (non-traceable graphs) $G$ of order
$n$ with $\delta(G)\geq k$, to determine the values of $\max\rho(G)$
and $\min\rho(\overline{G})$, respectively.
\end{problem}

The above problem follows some recent trends in extremal graph
theory, and contributes to a new but energetic studied area called
spectral extremal graph theory. For a comprehensive survey on this
area, we refer the reader to \cite{N2011} by Nikiforov.

Besides Theorems \ref{ThFiNi} and \ref{ThFiNi'}, there are also some
other works related to Problem \ref{PrrhoGGC}, see
\cite{LSX,LLT,NG}. However, a complete solution to
the problem is unknown till now. Our partial solution to Problem \ref{PrrhoGGC}
is as follows.

\begin{theorem}\label{ThrhoG}
Let $k$ be an integer, and $G$ be a graph of order $n$.\\
(1) If $\delta(G)\geq k\geq 0$ and $\rho(G)\geq
\rho(\bN_n^k)$, where $n\geq \max\{6k+10,(k^2+7k+8)/2\}$, then $G$ is traceable unless $G=\bN_n^k$; \\
(2) If $\delta(G)\geq k\geq 1$ and $\rho(G)\geq \rho(N_n^k)$, where
$n\geq\max\{6k+5,(k^2+6k+4)/2\}$, then $G$ is Hamiltonian unless $G=N_n^k$.
\end{theorem}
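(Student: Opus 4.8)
It suffices to prove the equivalent extremal statement that every non-Hamiltonian graph of order $n$ with $\delta\ge k\ge 1$ satisfies $\rho\le\rho(N_n^k)$, with equality only for $N_n^k$; statement~(2) is then immediate. So let $G$ attain the maximum of $\rho$ over this class, with $n\ge\max\{6k+5,(k^2+6k+4)/2\}$, and aim to show $G=N_n^k$. Because adding an edge neither destroys $\delta\ge k$ nor decreases $\rho$, and strictly increases $\rho$ in a connected graph, the $\rho$-maximal $G$ must be edge-maximal among non-Hamiltonian graphs with minimum degree at least $k$ (otherwise some $G+e$ would still be non-Hamiltonian with larger $\rho$); in particular $G$ is connected. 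Statement~(1) I would then obtain by the identical scheme, exploiting that $\bN_n^k$ is $N_{n+1}^{k+1}$ with a dominating vertex removed, which mirrors the passage ``$G$ traceable $\iff$ $G\vee K_1$ Hamiltonian''.

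First I would convert the spectral hypothesis into an edge count. Applying the contrapositive of Erd\H{o}s' Theorem~\ref{ThEr} to $G$, and using that $n\ge 6k-2$ (guaranteed by $n\ge 6k+5$) makes $\binom{n-k}{2}+k^2$ the larger of the two terms in that bound, I get $e(G)\le\binom{n-k}{2}+k^2=e(N_n^k)$. On the spectral side, since $K_{n-k}\subseteq N_n^k$ and $N_n^k$ is connected and strictly larger, $\rho(N_n^k)>\rho(K_{n-k})=n-k-1$, so maximality of $G$ yields the clean inequality $\rho(G)>n-k-1$.

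The core is a stability step turning ``$\rho(G)>n-k-1$ together with $e(G)\le\binom{n-k}{2}+k^2$'' into exact structure. Writing $\mathbf{x}$ for the Perron vector normalised so that its largest entry $x_u=1$, I would feed the walk identities $\rho x_u=\sum_{v\sim u}x_v$ and $\rho^2=\rho^2 x_u=\sum_{v\sim u}\sum_{w\sim v}x_w$ into the edge budget to force $u$ to have large degree and its neighbourhood to be nearly complete, confining the deficiency to a set of at most about $k$ vertices. Together with $\delta(G)\ge k$, this pushes $G$ to a clique of order $n-k$ plus $k$ further vertices, each of degree exactly $k$ attached to a common $K_k$. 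The toughness obstruction then does the bookkeeping: a Hamiltonian graph satisfies $c(G-S)\le|S|$ for every $S$, and this fails for $N_n^k$ at its central $K_k$ (removing those $k$ vertices leaves $k+1$ components), so the only way the low-degree vertices can keep $G$ non-Hamiltonian while meeting $\delta(G)\ge k$ is to form an independent set joined precisely to one fixed $K_k$. That configuration is a spanning subgraph of $N_n^k$, and edge-maximality upgrades this to $G=N_n^k$.

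I expect this stability step to be the main obstacle, and it is where the quantitative hypotheses on $n$ are consumed. The inequality $\rho(G)>n-k-1$ is only barely violated by many competing near-extremal graphs, so the delicate point is to control the Perron weights of the $\approx k$ low-degree vertices and show they cannot compensate for the missing clique edges; the bound $n\ge(k^2+6k+4)/2$ is precisely what makes the spectral gap between $N_n^k$ and every competing non-Hamiltonian structure wide enough to exclude impostors. For part~(1) I would run the same argument with the threshold $n-k-1$ replaced by $\rho(K_{n-k-1})$, the clique sitting inside $\bN_n^k$, and with the edge bound obtained by applying Theorem~\ref{ThEr} to $G\vee K_1$ (non-Hamiltonian of order $n+1$ with minimum degree at least $k+1$ whenever $G$ is non-traceable), identifying the extremal graph as $\bN_n^k$.
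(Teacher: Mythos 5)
Your proposal has a genuine gap at exactly the point you flag as ``the core'': the stability step is never carried out, and as sketched its conclusion is false. The paper does not attempt a Perron-vector stability analysis at all. Instead it converts the spectral hypothesis into an edge \emph{lower} bound via Nikiforov's inequality (Theorem \ref{ThNi'}): from $\rho(G)\geq\rho(N_n^k)>n-k-1$ one gets $e(G)>\frac{1}{2}\left(n^2-(2k+1)n+k(2k+1)\right)\geq\binom{n-k-1}{2}+(k+1)^2$, and this is precisely where the hypothesis $n\geq(k^2+6k+4)/2$ is consumed. It then invokes a purely structural, closure-based refinement of Erd\H{o}s' theorem (Lemma \ref{LeeGH}): any non-Hamiltonian $G$ with $\delta(G)\geq k$ and more than $\binom{n-k-1}{2}+(k+1)^2$ edges satisfies $G\subseteq L_n^k$ or $G\subseteq N_n^k$. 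Your route replaces both ingredients by (i) the contrapositive of Erd\H{o}s' theorem, which only yields the edge \emph{upper} bound $e(G)\leq\binom{n-k}{2}+k^2$ and no structure, and (ii) a promised walk-counting argument that would ``confine the deficiency to about $k$ vertices.'' Item (ii) is the entire difficulty of the theorem, and you give no argument for it.

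Moreover, the structural endpoint you claim for the stability step cannot be right, because it omits the second extremal family. The graph $L_n^k=K_1\vee(K_k+K_{n-k-1})$ is non-Hamiltonian, has $\delta=k$, and consists of a clique of order $n-k$ plus $k$ further vertices---but those $k$ vertices form a clique attached to a single cut vertex, not an independent set joined to a $K_k$. Your toughness bookkeeping ($c(G-S)\leq|S|$) does not exclude it: its non-Hamiltonicity comes from the cut vertex, not from a tough set of size $k$. So the correct conclusion of any such stability step must be ``$G\subseteq L_n^k$ or $G\subseteq N_n^k$,'' after which one still needs the spectral comparison $\rho(L_n^k)<\rho(N_n^k)$ (for $k\geq 2$) and $\rho(H)<\rho(N_n^k)$ for every proper subgraph $H$ of either graph. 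The paper obtains this comparison via the Kelmans operation (Theorem \ref{ThCs} and Lemma \ref{LeCompare}(4)); it is not automatic, and your proposal never addresses it. The same omission recurs in your part (1), where $\bL_n^k=K_{k+1}+K_{n-k-1}$ must be excluded by the analogous inequality $\rho(\bL_n^k)<\rho(\bN_n^k)$ from Lemma \ref{LeCompare}(3).
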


We completely determine the values of $\min\rho(\overline{G})$ in
Problem \ref{PrrhoGGC}.

\begin{theorem}\label{ThrhoGC}
Let $k$ be an integer, and $G$ be a graph of order $n$.\\
(1) If $\delta(G)\geq k\geq 0$, $n\geq 2k+2$ and
$\rho(\overline{G})\leq \rho(\overline{\bL_n^k})$, then $G$ is
traceable unless $G=\bL_n^k$, or $n=2k+2$ and
$G\in \mathcal{H}_n$; \\
(2) If $\delta(G)\geq k\geq 1$, $n\geq 2k+1$ and
$\rho(\overline{G})\leq\rho(\overline{L^k_n})$, then $G$ is
Hamiltonian unless $G=L_n^k$, or $n=2k+1$ and $G\in\mathcal{H}_n$.
\end{theorem}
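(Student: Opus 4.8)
The plan is to deduce part (1) from part (2) and to prove part (2) in contrapositive form. First I record the two complement spectra that govern everything: since $\overline{L_n^k}=\overline{K_1\vee(K_k+K_{n-k-1})}=K_1+K_{k,n-k-1}$ and $\overline{\bL_n^k}=\overline{K_{k+1}+K_{n-k-1}}=K_{k+1,n-k-1}$, we have $\rho(\overline{L_n^k})=\sqrt{k(n-k-1)}$ and $\rho(\overline{\bL_n^k})=\sqrt{(k+1)(n-k-1)}$. For the reduction I use that $G$ is traceable if and only if $G\vee K_1$ is Hamiltonian, that $\delta(G\vee K_1)=\delta(G)+1$, and that $\overline{G\vee K_1}=\overline{G}+K_1$ has the same spectral radius as $\overline{G}$ (an isolated vertex only contributes the eigenvalue $0$). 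Applying part (2) to $G\vee K_1$, which has order $n+1$ and minimum degree at least $k+1$, and noting that $G\vee K_1=L^{k+1}_{n+1}$ holds exactly when $G=\bL_n^k$, part (1) follows: the range $n+1\ge 2(k+1)+1$ matches $n\ge 2k+2$, and the boundary family $\mathcal H_{n+1}$ for $G\vee K_1$ transfers to the stated exception $\mathcal H_n$ at $n=2k+2$. So it suffices to treat (2).

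For (2) I argue the contrapositive: assuming $G$ is non-Hamiltonian with $\delta(G)\ge k\ge 1$, $n\ge 2k+1$, and $\rho(\overline{G})\le\sqrt{k(n-k-1)}$, I want to force $G=L_n^k$, or $n=2k+1$ and $G\in\mathcal H_n$. Adding an edge to $G$ deletes an edge from $\overline{G}$ and cannot increase $\rho(\overline{G})$; hence, replacing $G$ by a graph obtained by adding edges while preserving non-Hamiltonicity, I may assume $G$ is \emph{edge-maximal non-Hamiltonian}, still with $\delta(G)\ge k$ and $\rho(\overline{G})\le\sqrt{k(n-k-1)}$. Since $L_n^k$ is itself edge-maximal non-Hamiltonian, recovering it as the unique equality graph at the end will complete the argument.

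The structural engine is that a toughness-violating cut yields a complete multipartite subgraph of $\overline{G}$. If there is a set $S$ with $s=|S|$ and $c(G-S)\ge s+1$, writing the components of $G-S$ as $C_1,\dots,C_t$ with $t\ge s+1$, then $G$ has no edges between distinct $C_i$, so $\overline{G}$ contains the complete $t$-partite graph on $C_1,\dots,C_t$; moreover $\delta(G)\ge k$ forces each $|C_i|\ge k+1-s$. Bounding $\rho(\overline{G})$ below by the spectral radius of this subgraph and using that $x(n-1-x)$ is increasing on $[0,(n-1)/2]$, I optimize over $s$ and the part sizes. For $s=1$ the parts have size $\ge k$ and sum to $n-1$, and $\rho$ of the multipartite subgraph is at least $\sqrt{k(n-k-1)}$, with equality only when $t=2$ and the parts are $\{k,\,n-k-1\}$; tracing equality back (each $C_i$ a clique, $S$ fully joined) gives exactly $G=L_n^k$ when $n>2k+1$. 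At the other extreme $s=k$ the parts are singletons and $\overline{G}$ contains a $K_{k+1}$ of spectral radius $k$; this ties the value $\sqrt{k(n-k-1)}=k$ precisely at $n=2k+1$, and its equality graphs are exactly the family $\mathcal H_n$. For $s\ge 2$ with $n>2k+1$, the extra parts and the clique they induce among the small classes push the multipartite (complete split) spectral radius strictly above $\sqrt{k(n-k-1)}$, so no intermediate configuration ties; this comparison of complete multipartite spectral radii, together with the equality analysis, is the technical heart and is where the precise range of $n$ enters.

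The main obstacle is that non-Hamiltonicity does \emph{not} by itself supply a toughness-violating cut: there are $1$-tough, highly connected, edge-maximal non-Hamiltonian graphs (the Petersen graph being the paradigm) to which the multipartite argument does not apply, since their complements contain no $K_{k,n-k-1}$. These must be excluded a priori, and the role of the hypothesis $\rho(\overline{G})\le\sqrt{k(n-k-1)}$ is precisely to do so: I must show that every $1$-tough edge-maximal non-Hamiltonian graph in the admissible range has $\rho(\overline{G})>\sqrt{k(n-k-1)}$, so that only the toughness-violating graphs survive the hypothesis. For the (near-)regular such graphs this is immediate, since a non-Hamiltonian regular graph has degree at most $\lceil n/2\rceil-1$ by Dirac's theorem, whence $\rho(\overline{G})=n-1-\delta\ge\lfloor n/2\rfloor>\sqrt{k(n-k-1)}$ for $n>2k+1$; the delicate point is extending this lower bound to irregular $1$-tough graphs via their degree sequence. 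The remaining care is at the boundary $n=2k+1$, where $\sqrt{k(n-k-1)}=k$ and the minimum is attained simultaneously by $L_n^k$ (from the $s=1$ end) and by the whole family $\mathcal H_n$ (from the $s=k$ end, whose complements all contain a dominating $K_{k+1}$ of spectral radius $k$); isolating this family and checking that the multipartite optimization and the edge-maximality equality analysis are tight exactly there is the part demanding the most attention.
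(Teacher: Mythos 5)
Your reduction of (1) to (2) via $G\vee K_1$ is exactly right (it is also the paper's reduction), and your computation of the two complement spectral radii is correct. But your proof of (2) has a genuine gap, and you name it yourself: your entire structural engine --- a set $S$ with $c(G-S)\ge |S|+1$ producing a complete multipartite subgraph of $\overline{G}$ --- applies only to graphs that are not $1$-tough, and non-Hamiltonicity does not supply such a cut. To cover the remaining graphs you assert that every $1$-tough edge-maximal non-Hamiltonian graph in the admissible range must satisfy $\rho(\overline{G})>\sqrt{k(n-k-1)}$, but you verify this only for regular graphs (via Dirac), and you leave the irregular case as ``the delicate point,'' to be handled ``via their degree sequence,'' with no argument at all. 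That missing step is not a technicality: for $1$-tough graphs there is no cut structure to exploit, and proving the required spectral lower bound for every such non-Hamiltonian graph is essentially the theorem itself restricted to the hardest class of inputs. As written, your proposal proves the statement only for graphs admitting a toughness-violating cut.

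The paper closes exactly this hole with a different and much more economical mechanism, which is worth internalizing. Take $G'=cl(G)$, the Bondy--Chv\'atal closure (Theorem \ref{ThBoCh}). If $G'$ is non-Hamiltonian, then \emph{every} pair of nonadjacent vertices $u,v$ of $G'$ has $d_{G'}(u)+d_{G'}(v)\le n-1$, i.e., every edge $uv$ of $\overline{G'}$ satisfies $d_{\overline{G'}}(u)+d_{\overline{G'}}(v)\ge n-1$; combined with $k\le d_{\overline{G'}}\le n-k-1$ (from $\delta(G)\ge k$) this yields $d_{\overline{G'}}(u)d_{\overline{G'}}(v)\ge k(n-k-1)$ on every edge of $\overline{G'}$, so Theorem \ref{ThBeZh} gives $\rho(\overline{G'})\ge\sqrt{k(n-k-1)}$ with no case distinction on toughness whatsoever: the Petersen-type graphs you worry about are handled automatically, because the closure converts non-Hamiltonicity into degree information on every complement edge. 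The equality case of Theorem \ref{ThBeZh} (the relevant component of $\overline{G'}$ is regular or semi-regular bipartite) then forces all nonadjacent degree sums in $G'$ to equal $n-1$ exactly, and Theorem \ref{ThAiCh} (Ainouche--Christofides) identifies $G'$ as $L_n^k$ or, for $n=2k+1$, a member of $\mathcal{H}_n$; the passage from $G'$ back to $G$ is the same spanning-subgraph monotonicity you describe. If you want to salvage your own outline, the closure is the tool you are missing: your edge-maximality reduction already implies $G$ is closed (adding any edge between a nonadjacent pair of degree sum at least $n$ would make $G$ Hamiltonian), so you inherit the Ore-type failure on all nonadjacent pairs and no longer need cuts, toughness, or the multipartite optimization at all.
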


For the signless Laplacian, Zhou \cite{Z}, Nikiforov
\cite{N2002}, Yu and Fan \cite{YF} and Liu et al.
\cite{LSX} gave some sufficient conditions for Hamilton
cycles or Hamilton paths in terms of signless Laplacian spectral
radii of a graph and its complement. We list the following result
which is closely related to our topic.

\begin{theorem}[Yu and Fan \cite{YF}]\label{ThYuFa}
Let $G$ be a graph of order $n\geq 6$.\\
(1) If $q(G)\geq 2n-4$, then $G$ is traceable unless
$G=\bN_n^0$.\\
(2) If $q(G)>2n-4$, then $G$ is Hamiltonian unless $G=N^1_{n}$.
\end{theorem}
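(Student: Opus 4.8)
The plan is to convert the spectral hypothesis into an edge‑count condition and then apply Ore's extremal theorem (Theorem~\ref{ThOr}) together with a stability analysis at the threshold. The single analytic ingredient I need is the upper bound
\[
q(G)\le \frac{2e(G)}{n-1}+n-2,
\]
valid for every graph $G$ of order $n$; this is a standard estimate for the signless Laplacian spectral radius, and it is attained by $K_n$, by $K_{1,n-1}$, and—crucially for the extremal analysis—by $K_{n-1}+K_1$. Everything else in the argument is combinatorial.

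For part~(2), the hypothesis $q(G)>2n-4$ combined with the displayed bound yields $\frac{2e(G)}{n-1}>n-2$, that is, $e(G)\ge\binom{n-1}{2}+1$. I would split into two cases. If $e(G)\ge\binom{n-1}{2}+2$, then Ore's theorem (Theorem~\ref{ThOr}) immediately gives that $G$ is Hamiltonian. The remaining case $e(G)=\binom{n-1}{2}+1$ is the delicate one: here Ore's bound is missed by exactly one edge, so I would appeal to the stability form of Ore's theorem, which classifies the non‑Hamiltonian graphs on $n$ vertices with $\binom{n-1}{2}+1$ edges (their complements have only $n-2$ edges, forcing a very rigid structure). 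Among this short list of candidate non‑Hamiltonian graphs I would then discard every one whose signless Laplacian spectral radius fails to exceed $2n-4$; a direct computation should leave $N_n^1=K_1\vee(K_{n-2}+K_1)$ as the only survivor, which is exactly the asserted exception.

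For part~(1) I would use the fact that a graph $G$ is traceable if and only if $G\vee K_1$ is Hamiltonian, so the combinatorial content mirrors part~(2) with $n$ replaced by $n+1$. The bound gives, from $q(G)\ge 2n-4$, that $e(G)\ge\binom{n-1}{2}$; equivalently, $G\vee K_1$ has $\binom{n}{2}+1$ edges, precisely Ore's threshold on $n+1$ vertices. If this inequality is strict then $G$ is traceable, while if $e(G)=\binom{n-1}{2}$ the displayed bound is attained with equality, $q(G)=2n-4$. I would then invoke the extremal characterization of non‑traceable graphs with $\binom{n-1}{2}$ edges—equivalently, via $G\vee K_1$, the edge‑maximal non‑Hamiltonian graphs on $n+1$ vertices—and filter by the equality $q(G)=2n-4$ to conclude that a non‑traceable $G$ must equal $K_{n-1}+K_1=\bN_n^0$.

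The main obstacle, in both parts, is the boundary edge‑count case rather than the passage from spectra to edges. Concretely, the hard step is to show that the spectral condition is strong enough to single out $N_n^1$ (respectively $\bN_n^0$) from all other non‑Hamiltonian (respectively non‑traceable) graphs sitting at the extremal edge count: this requires the full stability/characterization theorem for edge‑maximal non‑Hamiltonian graphs and an honest computation of $q$ for each competing extremal family, verifying that only the claimed graph simultaneously meets the spectral bound and fails to be Hamiltonian (respectively traceable). The hypothesis $n\ge 6$ is exactly what makes these finitely many comparisons behave uniformly.
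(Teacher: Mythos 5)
This theorem is one the paper \emph{imports} from Yu and Fan \cite{YF} (with the $n\geq 6$ correction credited to \cite{LSX}); the paper contains no proof of it, so there is no in-paper argument to compare yours against line by line. That said, your proposal is correct and essentially complete, and it follows the same two-step scheme the paper itself uses to prove its generalization, Theorem \ref{ThqG}: convert the spectral hypothesis into an edge count via the Feng--Yu bound (Theorem \ref{ThFeYu}), then finish with a stability statement at the edge threshold. Your arithmetic is right: $q(G)>2n-4$ gives $e(G)\geq\binom{n-1}{2}+1$, and $q(G)\geq 2n-4$ gives $e(G)\geq\binom{n-1}{2}$. The one ingredient you invoke on faith --- the equality characterization in Ore's theorem --- is indeed classical and does exactly what you need; it can even be derived from the paper's own Theorem \ref{ThBoCh}: if $G$ is non-Hamiltonian there exist nonadjacent $u,v$ with $d(u)+d(v)\leq n-1$, so $e(G)\leq\binom{n-2}{2}+(n-1)=\binom{n-1}{2}+1$, and equality forces $G-\{u,v\}$ to be complete with $d(u)+d(v)=n-1$. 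If $d(u),d(v)\geq 2$ one builds a Hamilton cycle through a common neighbour of $u$ and $v$ whenever $n\geq 6$, so $d(u)=1$, $d(v)=n-2$, and $G=N_n^1$: for $n\geq 6$ your ``short list'' is the singleton $\{N_n^1\}$, and no spectral filtering is needed at all. (At $n=5$ the list also contains $K_{1,1,3}=K_2\vee 3K_1$, whose signless Laplacian spectral radius $(7+\sqrt{33})/2$ exceeds $2n-4=6$; this is precisely the counterexample the paper mentions, and it is the only real role of the hypothesis $n\geq 6$.) Your part (1) is also sound: in the boundary case $e(G)=\binom{n-1}{2}$, the cone $G\vee K_1$ is non-Hamiltonian with $\binom{n}{2}+1$ edges on $n+1\geq 7$ vertices, hence equals $N_{n+1}^1$; since the apex is the unique vertex of degree $n$ in $N_{n+1}^1$, deleting it yields $G=K_{n-1}+K_1=\bN_n^0$. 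So, modulo writing out (or properly citing) the Ore-equality analysis, your proof stands; in the paper's framework the role of that classification is played by Lemmas \ref{LeeGH} and \ref{LeeGT}, which are the minimum-degree-$k$ analogues of the same stability fact.
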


In \cite{YF}, the bound of $n\geq 6$ is missed,
and in fact there are counterexamples of small order, namely
$K_{1,3}$ for traceability, and $K_{1,1,3}$ for Hamiltonicity. This
tiny flaw has already been pointed out in \cite{LSX} by Liu et al.

Motivated by Problem \ref{PrrhoGGC} and Theorem \ref{ThYuFa}, we have the following problem:

\begin{problem}\label{PrqG}
Among all non-Hamiltonian graphs (non-traceable graphs) $G$ of order
$n$ with $\delta(G)\geq k$, to determine the values of $\max q(G)$ and
$\min q(\overline{G})$, respectively.
\end{problem}

Our partial answer to Problem \ref{PrqG} is as follows.

\begin{theorem}\label{ThqG}
Let $k$ be an integer, and $G$ be a graph of order $n$.\\
(1) If $\delta(G)\geq k\geq 0$ and $q(G)\geq
q(\bN_n^k)$, where $n\geq\max\{6k+10,(3k^2+9k+8)/2\}$, then $G$ is traceable unless $G=\bN_n^k$; \\
(2) If $\delta(G)\geq k\geq 1$ and $q(G)\geq q(N_n^k)$,
where $n\geq\max\{6k+5,(3k^2+5k+4)/2\}$, then $G$ is
Hamiltonian unless $G=N_n^k$.
\end{theorem}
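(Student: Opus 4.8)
The plan is to prove the Hamiltonicity statement (2) in detail and to run the traceability statement (1) in parallel, exploiting that $\bN_n^k$ is to traceability what $N_n^k$ is to Hamiltonicity. Concretely, $\bN_n^k\vee K_1=N_{n+1}^{k+1}$, and deleting the clique $K_k$ from $N_n^k$ leaves $k+1$ components while deleting it from $\bN_n^k$ leaves $k+2$; this single structural difference is what produces the two slightly different thresholds on $n$. Throughout, write $q=q(G)$ and let $x\ge 0$ denote a Perron vector of $Q(G)=A+D$ with entries $x_v$.

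Step 1 (forcing density). First I would compute $q(N_n^k)$ exactly. The partition of $V(N_n^k)$ into $V(K_k)$, $V(K_{n-2k})$, and the $k$ independent vertices is equitable for $Q$, so $q(N_n^k)$ is the largest root of the characteristic polynomial of the quotient matrix
$$B=\begin{pmatrix} n+k-2 & n-2k & k\\ k & 2n-3k-2 & 0\\ k & 0 & k \end{pmatrix},$$
which gives $q(N_n^k)=2n-2k-2+O(k^2/n)$. Now let $G$ be non-Hamiltonian with $\delta(G)\ge k$ and $q\ge q(N_n^k)$. Reading the eigen-equation at a vertex $u$ maximizing $x_u$ gives $q\,x_u=d(u)x_u+\sum_{w\sim u}x_w\le 2d(u)x_u$, hence $\Delta(G)\ge q/2\ge n-k-1$; and the standard bound $q\le \tfrac{2e(G)}{n-1}+n-2$ together with the value of $q(N_n^k)$ forces $e(G)\ge e(N_n^k)-O(k^2)=\binom{n-k}{2}+k^2-O(k^2)$. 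Thus $G$ already has a vertex of nearly full degree and an edge count within $O(k^2)$ of the extremal value; this $O(k^2)$ slack is exactly the source of the quadratic-in-$k$ lower bound required on $n$.

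Step 2 (structural reduction and identification). Since adjoining an edge strictly increases $q$ on a connected graph, and the Bondy--Chv\'atal closure preserves both $\delta\ge k$ and non-Hamiltonicity, I may assume $G$ equals its own closure; it then suffices to prove $G=N_n^k$, because $G\subseteq C(G)=N_n^k$ with $q\ge q(N_n^k)$ forces equality by Perron--Frobenius strictness. The near-universal vertices supplied by Step 1, together with non-Hamiltonicity and Erd\H{o}s's edge bound (Theorem~\ref{ThEr}), localize the obstruction: there is a set $S$ with $|S|=s\le k$ whose deletion leaves at least $s+1$ components, each of which is forced by the density of Step~1 to be a clique, with $S$ complete and joined to all of $G-S$. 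In other words $G$ is confined to a spanning subgraph of $K_s\vee(K_{a_0}+K_{a_1}+\cdots+K_{a_s})$, the generic dense non-Hamiltonian structure. The constraint $\delta\ge k$ forces $a_i\ge k-s+1$, so maximizing the edge count (hence pushing up $q$) drives $s=k$, all small cliques down to singletons, and the remaining clique up to $K_{n-2k}$ -- that is, toward $N_n^k$. To finish, one shows $N_n^k$ strictly dominates every competing configuration in $q$: for each competitor the equitable-partition quotient matrix again reduces the comparison to estimating the largest roots of small characteristic polynomials, and the hypothesis $n\ge(3k^2+5k+4)/2$ is exactly what makes $s=k$ with singleton cliques win (and, separately, lets $N_n^k$ beat the ``balanced'' graph $N_n^{\lfloor(n-1)/2\rfloor}$ occurring in Erd\H{o}s's maximum). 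For (1) the identical scheme applies with the non-traceability localization $c(G-S)\ge|S|+2$, the family $\bN_n^k$, and the threshold $n\ge(3k^2+9k+8)/2$.

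The hard part will be the spectral comparison in Step 2. For the adjacency matrix one can lean on Kelmans-type edge-shifting transformations that monotonically increase $\rho$, but for the signless Laplacian $Q=A+D$ such a move alters two diagonal entries as well and is no longer monotone, so the ordering of the competing graphs must be extracted directly from sharp root estimates of their (cubic and slightly larger) quotient polynomials. Making these estimates precise enough both to single out $N_n^k$ among all $K_s$-join configurations and to pin down the exact quadratic threshold $n\ge\max\{6k+5,(3k^2+5k+4)/2\}$, rather than a cruder linear-in-$k$ bound, is the crux of the argument.
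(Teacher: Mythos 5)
Your skeleton matches the paper's at the top level: both proofs convert the spectral hypothesis into an edge count via the same inequality $q(G)\leq \frac{2e(G)}{n-1}+n-2$ (Theorem \ref{ThFeYu}) together with the easy lower bound $q(N_n^k)>q(K_{n-k})=2n-2k-2$, and both then pass to the Bondy--Chv\'atal closure. But the two load-bearing steps of your Step 2 are not carried out, and the justifications you sketch for them are flawed. First, your structural localization --- ``non-Hamiltonicity localizes the obstruction: there is a set $S$ with $|S|=s\leq k$ whose deletion leaves at least $s+1$ components, each of which is forced by the density to be a clique, with $S$ joined to all of $G-S$'' --- is not a valid deduction: a non-Hamiltonian graph need not contain any separator $S$ with $c(G-S)>|S|$ (non-Hamiltonian graphs can be $1$-tough), so this cannot be read off from non-Hamiltonicity plus an edge count; it has to be proved. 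This is precisely the content of the paper's Lemma \ref{LeomegaG} (a clique-number bound for closed dense graphs, proved by a two-case edge-counting argument) and Lemmas \ref{LeeGH}--\ref{LeeGT} (a frontier-vertex analysis of the closure), and these lemmas give a conclusion strictly sharper than yours: the closure is \emph{exactly} $L_n^k$ or $N_n^k$, all intermediate join configurations $K_s\vee(K_{a_0}+\cdots+K_{a_s})$ with $2\leq s\leq k-1$ being eliminated \emph{combinatorially}, by explicitly exhibiting a Hamilton cycle in them --- not spectrally.

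Second, the step you yourself identify as ``the crux'' --- ordering $q$ over all competing configurations by root estimates of quotient polynomials --- is left undone, and the premise that forces you into it is false: the Kelmans operation \emph{is} monotone for the signless Laplacian spectral radius. The paper proves this (Theorem \ref{ThCs}(2)): with $x_u\geq x_v$ the diagonal perturbation contributes $(x_u^2-x_v^2)|W|\geq 0$, so $\bix^T(A'+D')\bix\geq\bix^T(A+D)\bix$; the degree change you worry about helps rather than hurts. This monotonicity (together with edge-deletion monotonicity) is exactly what makes the only comparison actually needed, $q(N_n^k)>q(L_n^k)$ for $k\geq 2$, a one-line consequence (Lemma \ref{LeCompare}(4)): $k-1$ Kelmans operations turn $L_n^k$ into a proper subgraph of $N_n^k$. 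So your plan is workable only after (i) replacing the fallacious toughness-style localization by a genuine structure theorem for dense closed non-Hamiltonian graphs, and (ii) realizing that the spectral comparison is easy once that structure theorem pins the closure down to $L_n^k$ or $N_n^k$; as written, both the structural step and the comparison step are genuine gaps.
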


Nikiforov mentioned a result on the signless Laplacian spectral
radius of the complement of a graph and Hamiltoncity, see \cite[Section 3.8]{N2011}
for details.
\subsection{Hamiltonicity of balanced bipartite graphs}

Let $G$ be a bipartite graph with partite sets $\{X,Y\}$. We use
$\widehat{G}$ to denote the \emph{quasi-complement} of $G$, i.e.,
the graph with vertex set $V(\widehat{G})=V(G)$ and for any $x\in X$
and $y\in Y$, $xy\in E(\widehat{G})$ if and only if $xy\notin E(G)$.
The bipartite graph $G$ is called \emph{balanced} if $|X|=|Y|$. Note
that every Hamiltonian bipartite graph is balanced.

Our second aim of this paper is to find spectral analogues of Moon
and Moser's theorem, which is a bipartite analogue of Erd\H{o}s'
theorem and given as follows.

\begin{theorem}[Moon and Moser \cite{MM}]\label{ThMoMo}
Let $G$ be a balanced bipartite graph of order $2n$ with
$\delta(G)\geq k$, where $1\leq k\leq n/2$. If
$$e(G)>\max\left\{n(n-k)+k^2,n(n-\left\lfloor\frac{n}{2}\right\rfloor)+{\left\lfloor\frac{n}{2}\right\rfloor}^2\right\},$$
then $G$ is Hamiltonian.
\end{theorem}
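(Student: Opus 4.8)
The plan is to establish the contrapositive in a sharp quantitative form: I will show that every non-Hamiltonian balanced bipartite graph $G$ with classes $X,Y$ of size $n$ and $\delta(G)\ge k$ satisfies $e(G)\le n(n-k)+k^2$, which coincides with the displayed maximum. The first step is a reduction to an edge-maximal instance. Among all non-Hamiltonian balanced bipartite graphs on the same vertex classes $X,Y$ that contain $G$ as a subgraph, I would choose one, $G^\ast$, with the greatest number of edges. Then for every non-adjacent pair $x_0\in X$, $y_0\in Y$ the graph $G^\ast+x_0y_0$ is Hamiltonian (it has more edges and still contains $G$, so maximality forces Hamiltonicity), while $e(G)\le e(G^\ast)$ and $\delta(G^\ast)\ge\delta(G)\ge k$ since adding $X$--$Y$ edges cannot lower any degree. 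As $K_{n,n}$ is Hamiltonian we have $G^\ast\ne K_{n,n}$, so it suffices to bound $e(G^\ast)$.

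The combinatorial engine is a bipartite Ore-type inequality: for this edge-maximal $G^\ast$, every non-adjacent pair $x_0,y_0$ obeys $d(x_0)+d(y_0)\le n$. To obtain it I would take a Hamilton cycle of $G^\ast+x_0y_0$; it must use the new edge, so deleting that edge leaves a Hamilton path $P\colon x_0=u_1,u_2,\dots,u_{2n}=y_0$ in $G^\ast$, which alternates between $X$ and $Y$, the odd-indexed vertices lying in $X$ and the even-indexed ones in $Y$. The crossing (rotation) step is the crux: if some index $i$ satisfied both $u_{2i}\in N(x_0)$ and $u_{2i-1}\in N(y_0)$, then $u_1u_2\cdots u_{2i-1}u_{2n}u_{2n-1}\cdots u_{2i}u_1$ would be a Hamilton cycle of $G^\ast$, a contradiction. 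Hence the assignment $u_{2i}\mapsto u_{2i-1}$ is an injection from $N(x_0)$ into $X\setminus N(y_0)$, which yields $d(x_0)\le n-d(y_0)$. I expect this rotation step to be the main obstacle, as it is the only place where Hamiltonicity is genuinely exploited; everything afterwards is counting.

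It then remains to count edges through a single well-chosen vertex. Since $G^\ast\ne K_{n,n}$, some $y_0\in Y$ has $d(y_0)<n$ and therefore has a non-neighbour $x_0\in X$; write $b=d(y_0)$. The minimum-degree hypothesis gives $b\ge k$ and $d(x_0)\ge k$, and the inequality above applied to $x_0,y_0$ forces $b\le n-k$. Moreover, for \emph{every} non-neighbour $x$ of $y_0$ the same inequality gives $d(x)\le n-b$, while the $b$ neighbours of $y_0$ have degree at most $n$. Summing degrees over $X$,
\begin{equation*}
e(G^\ast)=\sum_{x\in X}d(x)\le bn+(n-b)(n-b)=n^2-nb+b^2=:f(b).
\end{equation*}
The quadratic $f$ is convex and symmetric about $b=n/2$, so on the feasible interval $b\in[k,n-k]$ it is maximized at the symmetric endpoints, where $f(k)=f(n-k)=n(n-k)+k^2$. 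Thus $e(G^\ast)\le n(n-k)+k^2$, and as $e(G)\le e(G^\ast)$ the bound holds for $G$.

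Finally I would match this to the stated threshold. The second entry of the maximum equals $f(\lfloor n/2\rfloor)=n(n-\lfloor n/2\rfloor)+\lfloor n/2\rfloor^2$, the value of $f$ at the largest admissible degree; since $f$ is decreasing on $[0,n/2]$ and $k\le\lfloor n/2\rfloor$, we have $f(k)\ge f(\lfloor n/2\rfloor)$, so $\max\{\,n(n-k)+k^2,\ f(\lfloor n/2\rfloor)\,\}=n(n-k)+k^2$, the second entry being present only for parallelism with Erd\H{o}s' theorem. Hence $e(G)>\max\{\cdots\}$ forces $G$ to be Hamiltonian. Sharpness is witnessed by the graph obtained from $K_{n,n}$ by deleting all edges between a fixed $k$-subset $A\subseteq X$ and a fixed $(n-k)$-subset $B\subseteq Y$: it has $\delta=k$ and exactly $n(n-k)+k^2$ edges, and it is non-Hamiltonian because deleting the $k$ fully-joined vertices $Y\setminus B$ isolates the $k$ vertices of $A$ and leaves $k+1$ components.
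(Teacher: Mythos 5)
Your proof is correct. The edge-maximal counterexample $G^\ast$ is well defined and inherits $\delta\ge k$; the rotation step is sound (the Hamilton cycle of $G^\ast+x_0y_0$ must use the new edge, the resulting Hamilton path alternates sides with odd-indexed vertices in $X$, and a crossing pair $u_{2i}\in N(x_0)$, $u_{2i-1}\in N(y_0)$ would close a Hamilton cycle inside $G^\ast$), yielding the bipartite Ore bound $d(x_0)+d(y_0)\le n$; the degree count $e(G^\ast)\le bn+(n-b)^2=n^2-nb+b^2$ with $b=d(y_0)\in[k,n-k]$, together with convexity and the symmetry $f(k)=f(n-k)$, gives $e(G^\ast)\le n(n-k)+k^2$; and your observation that the second entry of the displayed maximum is dominated by the first (since $f$ decreases on $[0,n/2]$ and $k\le\lfloor n/2\rfloor$) is also right, as is the sharpness witness, which is exactly the graph $B_n^k$ of the paper. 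Note, however, that there is no in-paper proof to compare against: Theorem \ref{ThMoMo} is quoted from Moon and Moser \cite{MM} as background. The closest the paper comes is Lemma \ref{LeeBGH}, a sharpened version (threshold $n(n-k-1)+(k+1)^2$ for $n\ge 2k+1$, with conclusion ``Hamiltonian unless $G\subseteq B_n^k$''), and its proof takes a genuinely different route: pass to the bipartite closure $cl_B(G)$ (Theorem \ref{ThBoCh'}), use edge counting on the closed graph to grow a complete bipartite subgraph $K_{n,n-k}$ (Lemma \ref{LezBG}), and then identify the closed graph as $B_n^k$. Your maximal-counterexample-plus-rotation argument is the classical Ore-type proof: it is more elementary and self-contained, needing no closure machinery, but it only delivers the edge bound; the paper's closure route costs more work and buys the extremal characterization $G\subseteq B_n^k$, which is precisely the stability information its spectral theorems require.
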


Moon and Moser \cite{MM} also pointed out that a balanced
bipartite graph $G$ of order $2n$ is Hamiltonian if $\delta(G)>n/2$.

Let $B^k_n$ ($1\leq k\leq n/2$) be the graph obtained from $K_{n,n}$
by deleting all edges in its one subgraph $K_{n-k,k}$. Note that
$e(B_n^k)=n(n-k)+k^2$ and $B_n^k$ is not Hamiltonian. This type of
graphs shows the edge number condition in Theorem \ref{ThMoMo} is
sharp. We denote by $\mathcal{B}_n^k$ ($1\leq k\leq n/2$) the set of
balanced bipartite graphs in which each graph is obtained from a
bipartite graph $H$ with two partite sets $\{X,Y\}$ of size $k$ and
$n-k$, respectively, by adding $k$ additional vertices each of which
is adjacent to every vertex in $X$, and $n-k$ additional vertices
each of which is adjacent to every vertex in $Y$. Note that $B_n^k$
is the graph in $\mathcal{B}_n^k$ with the largest edge number. (In
this case, $H$ is a complete bipartite graph.)

\begin{center}
\setlength{\unitlength}{1.2pt} \small
\begin{picture}(190,70)
\thicklines

\multiput(20,10)(0,10){6}{\circle*{4}}
\multiput(70,10)(0,10){6}{\circle*{4}}
\multiput(120,20)(0,10){4}{\circle*{4}}
\multiput(170,20)(0,10){4}{\circle*{4}}

\put(20,10){\line(1,0){50}} \put(20,10){\line(5,1){50}}
\put(20,10){\line(5,2){50}} \put(20,10){\line(5,3){50}}
\put(20,10){\line(5,4){50}} \put(20,10){\line(1,1){50}}
\put(20,20){\line(1,0){50}} \put(20,20){\line(5,1){50}}
\put(20,20){\line(5,2){50}} \put(20,20){\line(5,3){50}}
\put(20,20){\line(5,4){50}} \put(20,20){\line(5,-1){50}}
\put(20,30){\line(1,0){50}} \put(20,30){\line(5,1){50}}
\put(20,30){\line(5,2){50}} \put(20,30){\line(5,3){50}}
\put(20,30){\line(5,-2){50}} \put(20,30){\line(5,-1){50}}
\put(20,40){\line(1,0){50}} \put(20,40){\line(5,1){50}}
\put(20,40){\line(5,2){50}} \put(20,40){\line(5,-3){50}}
\put(20,40){\line(5,-2){50}} \put(20,40){\line(5,-1){50}}
\put(20,50){\line(1,0){50}} \put(20,50){\line(5,1){50}}
\put(20,50){\line(5,-4){50}} \put(20,50){\line(5,-3){50}}
\put(20,50){\line(5,-2){50}} \put(20,50){\line(5,-1){50}}
\put(20,60){\line(1,0){50}} \put(20,60){\line(1,-1){50}}
\put(20,60){\line(5,-4){50}} \put(20,60){\line(5,-3){50}}
\put(20,60){\line(5,-2){50}} \put(20,60){\line(5,-1){50}}

\put(70,10){\line(5,1){50}} \put(70,10){\line(5,2){50}}
\put(70,10){\line(5,3){50}} \put(70,10){\line(5,4){50}}
\put(70,20){\line(5,1){50}} \put(70,20){\line(5,2){50}}
\put(70,20){\line(5,3){50}} \put(70,20){\line(1,0){50}}
\put(70,30){\line(5,1){50}} \put(70,30){\line(5,2){50}}
\put(70,30){\line(5,-1){50}} \put(70,30){\line(1,0){50}}
\put(70,40){\line(5,1){50}} \put(70,40){\line(5,-2){50}}
\put(70,40){\line(5,-1){50}} \put(70,40){\line(1,0){50}}
\put(70,50){\line(5,-3){50}} \put(70,50){\line(5,-2){50}}
\put(70,50){\line(5,-1){50}} \put(70,50){\line(1,0){50}}
\put(70,60){\line(5,-3){50}} \put(70,60){\line(5,-2){50}}
\put(70,60){\line(5,-1){50}} \put(70,60){\line(5,-4){50}}

\put(120,20){\line(5,1){50}} \put(120,20){\line(5,2){50}}
\put(120,20){\line(5,3){50}} \put(120,20){\line(1,0){50}}
\put(120,30){\line(5,1){50}} \put(120,30){\line(5,2){50}}
\put(120,30){\line(5,-1){50}} \put(120,30){\line(1,0){50}}
\put(120,40){\line(5,1){50}} \put(120,40){\line(5,-2){50}}
\put(120,40){\line(5,-1){50}} \put(120,40){\line(1,0){50}}
\put(120,50){\line(5,-3){50}} \put(120,50){\line(5,-2){50}}
\put(120,50){\line(5,-1){50}} \put(120,50){\line(1,0){50}}

\end{picture}

\small Fig. 2. The Graph $B_{10}^4$.
\end{center}

We remark that all graphs in $\mathcal{B}_n^k$ have the
quasi-complements of the same (signless Laplacian) spectral radius,
and for any (spanning) subgraph $G$ of $B_n^k$,
$\rho(\widehat{G})=\rho(\widehat{B_n^k})$
(resp. $q(\widehat{G})=q(\widehat{B_n^k})$) if and only if
$G\in\mathcal{B}_n^k$.

In this subsection, we consider a problem similar to Problems
\ref{PrrhoGGC} and \ref{PrqG} for balanced bipartite graphs.

\begin{problem}\label{PrrhoBGBGCqBGBGC}
Among all non-Hamiltonian balanced bipartite graphs $G$ of order
$2n$ with $\delta(G)\geq k$, to determine the values $\max\rho(G),
\min\rho(\widehat{G}), \max q(G)$ and $\min q(\widehat{G})$,
respectively.
\end{problem}

There are some results related to this problem, see
\cite{LSX,LLT}. Our partial solution to Problem
\ref{PrrhoBGBGCqBGBGC} is given as follows. The two special graphs
$\varGamma_1$ and $\varGamma_2$ are shown in Fig. 3 \cite[Fig.1]{FJP}.

\begin{theorem}\label{ThrhoBGBGCqBG}
Let $G$ be a balanced bipartite graph of order $2n$ and of minimum
degree $\delta(G)\geq k\geq 1$. \\
(1) If $n\geq (k+1)^2$ and $\rho(G)\geq\rho(B^k_n)$, then $G$ is
Hamiltonian unless $G=B^k_n$.\\
(2) If $n\geq (k+1)^2$ and $q(G)\geq q(B^k_n)$, then $G$ is
Hamiltonian unless $G=B^k_n$.\\
(3) If $n\geq 2k$ and $\rho(\widehat{G})\leq\rho(\widehat{B^k_n})$,
then $G$ is Hamiltonian unless $G\in\mathcal{B}_n^k$, or
$G=\varGamma_1$ or $\varGamma_2$ for $n=4$ and $k=2$.
\end{theorem}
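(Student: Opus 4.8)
All three parts follow a common two-step scheme: use Moon and Moser's theorem (Theorem~\ref{ThMoMo}) to convert non-Hamiltonicity into the presence of a large \emph{complete bipartite hole}, and then carry out a spectral optimization over the resulting graphs. Concretely, I would first establish the structural core: if $G$ is a non-Hamiltonian balanced bipartite graph of order $2n$ with $\delta(G)=d$ (we may assume $d\le n/2$, since $d>n/2$ already forces a Hamilton cycle), then after relabelling there are $S\subseteq X$ and $T\subseteq Y$ spanning no edge of $G$ and with $|S|+|T|\ge n$; together with the degree bound $|S|,|T|\le n-d$ this yields $|S|\cdot|T|\ge d(n-d)\ge k(n-k)$. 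Equivalently, the quasi-complement $\widehat G$ contains a $K_{a,b}$ with $ab\ge k(n-k)$ and $a,b\le n-k$, and $G$ is a spanning subgraph of $K_{n,n}-K_{a,b}$. I would derive this from the bipartite Bondy--Chv\'atal closure of $G$: closing $G$ preserves non-Hamiltonicity, and in the closure every non-edge $xy$ satisfies $d(x)+d(y)\le n$, from which the hole $S,T$ is extracted. I expect this lemma to be the genuine combinatorial heart, and I anticipate that the few graphs for which the extraction degenerates at the boundary $n=2k$ are exactly the exceptional $\varGamma_1,\varGamma_2$.

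For parts (1) and (2) I would combine the hole with monotonicity of $\rho$ (resp.\ $q$) under spanning subgraphs: since $B_n^k=K_{n,n}-K_{n-k,k}$, it suffices to show
\[
\rho\bigl(K_{n,n}-K_{a,b}\bigr)\le\rho(B_n^k)\qquad(\text{and similarly for }q)
\]
for every admissible pair with $ab\ge k(n-k)$ and $a,b\le n-k$, with equality only at $(a,b)=(n-k,k)$. The graph $K_{n,n}-K_{a,b}$ has an equitable partition into four cells of sizes $a,n-a,b,n-b$, so $\rho$ and $q$ equal the Perron root of an explicit $4\times4$ quotient matrix, and the task reduces to analysing this root as a function of $(a,b)$. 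The expected behaviour is that removing more edges lowers the root (so the maximum lies on the curve $ab=k(n-k)$) and that along this curve the most skew corner $(n-k,k)$ dominates; a direct check (e.g.\ $n=10,k=2$, where $\rho(B_{10}^2)\approx 8.99$ beats $\rho(K_{10,10}-K_{4,4})\approx 8.75$) confirms the mechanism. Turning this into the full inequality, locating the threshold $n\ge(k+1)^2$ (resp.\ the signless-Laplacian threshold), and pinning down the equality case to recover $G=B_n^k$ through the characteristic-polynomial estimates, is the main technical obstacle here.

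Part (3) is cleaner and needs only $n\ge 2k$, precisely because the four-cell optimization is replaced by a one-line bound. Applying the structural core, $\widehat G\supseteq K_{a,b}$ with $ab\ge k(n-k)$, hence $\rho(\widehat G)\ge\rho(K_{a,b})=\sqrt{ab}\ge\sqrt{k(n-k)}=\rho(\widehat{B_n^k})$, the last equality because $\widehat{B_n^k}=K_{n-k,k}$ together with $n$ isolated vertices. The hypothesis $\rho(\widehat G)\le\rho(\widehat{B_n^k})$ then forces equality throughout: $ab=k(n-k)$, and with $a,b\le n-k$ and $a+b\ge n$ the only solution is $(a,b)=(n-k,k)$. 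Finally $\rho(\widehat G)=\sqrt{ab}=\rho(K_{a,b})$ with $K_{a,b}\subseteq\widehat G$ forces this $K_{n-k,k}$ to be a full connected component of $\widehat G$ (any further edge meeting it would strictly raise the Perron root), while the remaining part, living on the complementary $k$ and $n-k$ vertices, may be arbitrary since it automatically has spectral radius at most $\sqrt{k(n-k)}$. Translating this back shows $G\in\mathcal{B}_n^k$. At the boundary $n=2k$ the two cells have equal size $k$ and the componentwise argument no longer isolates $\mathcal{B}_n^k$ uniquely; for $n=4,k=2$ I would finish by a direct enumeration, which produces the additional extremal graphs $\varGamma_1$ and $\varGamma_2$.

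In summary, the principal difficulty is the structural core (producing a bipartite hole of size at least $k(n-k)$ with the correct dimension bounds) and, for (1) and (2), the monotonicity and shape-optimization of the $4\times4$ quotient Perron root that pins the maximum to $B_n^k$; once these are in hand, part (3) reduces to the displayed lower bound together with the equality analysis of $\rho(\widehat G)$ and a finite check at $n=2k$.
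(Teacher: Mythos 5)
Your ``structural core'' is false, and since all three parts of your scheme start from it, the proposal has a genuine gap. For any $n\ge 4$ consider the balanced bipartite graph $G_n$ with parts $X=\{x_1,\dots,x_n\}$, $Y=\{y_1,\dots,y_n\}$ and edges $\{x_1y_j:1\le j\le n\}\cup\{x_iy_1:1\le i\le n\}\cup\{x_iy_i:2\le i\le n\}$; for $n=4$ this is exactly $\varGamma_2$. Then $\delta(G_n)=2$; $G_n$ is non-Hamiltonian (each $x_i$ with $i\ge2$ has degree $2$, so both of its edges would lie on a Hamilton cycle, forcing $y_1$ to have $n-1\ge3$ cycle edges); and $G_n$ is B-closed, since every non-edge $x_iy_j$ ($i\ne j$, $i,j\ge2$) has degree sum $4\le n$. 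Yet if $S\subseteq X$, $T\subseteq Y$ are nonempty and span no edge, then $x_1\notin S$, $y_1\notin T$, and the index sets of $S$ and $T$ must be disjoint subsets of $\{2,\dots,n\}$, so $|S|+|T|\le n-1<n$. Hence a non-Hamiltonian, even B-closed, balanced bipartite graph with $\delta\ge k$ need not contain any hole with $|S|+|T|\ge n$, and---contrary to your expectation---the failure is not confined to the boundary $n=2k$: it occurs for every $n\ge4$ with $k=2$, and analogous families (with $k-1$ universal vertices on each side plus a perfect matching on the rest) exist for every $k\ge2$. So non-Hamiltonicity cannot be converted into a large complete bipartite hole; the obstructions that survive closure are degree-sum-tight graphs in the sense of Theorem~\ref{ThFeJaPo}, which exist at all scales, not just at $n=2k$. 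Consequently your plan to finish by a finite check at $n=4$, $k=2$ cannot work either.

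This is precisely the trap the paper's proof avoids, in two different ways. For (1) and (2) it never extracts structure from non-Hamiltonicity alone: the spectral hypothesis is first turned into an edge count ($\rho(G)>\sqrt{n(n-k)}$ with Theorem~\ref{ThBhFrPe} gives $e(G)>n(n-k)$, and similarly for $q$ via Theorem~\ref{ThLiNi}), after which Lemma~\ref{LeeBGH}---a closure-plus-edge-count lemma, where the hole genuinely does exist \emph{because} of the edge count---yields $G\subseteq B_n^k$; this also makes your $4\times4$ quotient-root optimization unnecessary, which is just as well since you acknowledge that step (including locating the threshold $n\ge(k+1)^2$ and the equality case) as an unresolved obstacle. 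For (3) the logical order is reversed relative to your plan: B-closedness gives $d_{\widehat{G'}}(x)+d_{\widehat{G'}}(y)\ge n$ for nonadjacent pairs, hence with $\delta\ge k$ one gets $d_{\widehat{G'}}(x)\,d_{\widehat{G'}}(y)\ge k(n-k)$ edgewise in $\widehat{G'}$, and Theorem~\ref{ThBeZh} forces $\rho(\widehat{G'})\ge\sqrt{k(n-k)}$; the spectral hypothesis then forces equality, a semi-regular component, degree sums exactly $n$, and Theorem~\ref{ThFeJaPo} finishes. On $G_n$ with $n\ge5$ this argument correctly produces a contradiction, since $\rho(\widehat{G_n})=n-2>\sqrt{2(n-2)}$, whereas your hole extraction has nothing to grab onto. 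Your equality analysis at the end of (3) (that $K_{n-k,k}$ must be a full component of $\widehat G$, recovering $\mathcal{B}_n^k$) is fine as far as it goes, but it sits on top of a lemma that is not true.
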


\begin{center}
\begin{picture}(190,80)
\thicklines

\put(0,0){\multiput(20,30)(20,0){4}{\circle*{4}}
\multiput(20,70)(20,0){4}{\circle*{4}}
\multiput(80,30)(-20,0){3}{\line(0,1){40}}
\put(20,30){\line(1,2){20}} \put(20,30){\line(1,1){40}}
\put(20,30){\line(3,2){60}} \put(20,70){\line(1,-2){20}}
\put(20,70){\line(1,-1){40}} \put(20,70){\line(3,-2){60}}
\put(45,10){$\varGamma_1$} }

\put(90,0){\multiput(20,30)(20,0){4}{\circle*{4}}
\multiput(20,70)(20,0){4}{\circle*{4}}
\multiput(80,30)(-20,0){4}{\line(0,1){40}}
\put(20,30){\line(1,2){20}} \put(20,30){\line(1,1){40}}
\put(20,30){\line(3,2){60}} \put(20,70){\line(1,-2){20}}
\put(20,70){\line(1,-1){40}} \put(20,70){\line(3,-2){60}}
\put(45,10){$\varGamma_2$} }

\end{picture}

\small Fig. 3. Graphs $\varGamma_1$ and $\varGamma_2$.
\end{center}

For the signless Laplacian spectral radius of the quasi-complement
of a balanced bipartite graph, we have the following result. Note
that one cannot get a better bound on $q(\widehat{G})$ even if
one adds the minimum degree condition in Theorem \ref{ThqBGC}.

\begin{theorem}\label{ThqBGC}
Let $G$ be a balanced bipartite graph of order $2n$. If
$q(\widehat{G})\leq n$, then $G$ is Hamiltonian unless
$G\in\bigcup_{k=1}^{\lfloor n/2\rfloor}\mathcal{B}_n^k$, or
$G=\varGamma_1$ or $\varGamma_2$ for $n=4$.
\end{theorem}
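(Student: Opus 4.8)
The plan is to prove Theorem \ref{ThqBGC} by reducing it, via an eigenvalue-interlacing argument, to the signless-Laplacian analogue of Moon--Moser's edge-count condition (Theorem \ref{ThMoMo}), and then tracing through the equality cases. First I would set $H=\widehat{G}$, so that $H$ is a balanced bipartite graph of order $2n$ with partite sets $X,Y$ of size $n$, and the hypothesis becomes $q(H)\le n$. The key observation is that $G$ is Hamiltonian if and only if $H$ is ``co-Hamiltonian'' in the appropriate sense, i.e.\ the edges missing from $H$ across the bipartition admit a Hamilton cycle in $G$. The natural strategy is to show that $q(H)\le n$ forces $H$ to have few edges, and that a bipartite graph with few edges has a quasi-complement satisfying Moon--Moser's degree/edge conditions.

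Concretely, the main technical step is to convert the spectral bound into an edge bound. For a bipartite graph $H$, the signless Laplacian $Q=A+D$ satisfies $q(H)\ge \frac{2e(H)}{n}$ by the standard Rayleigh-quotient estimate (testing $Q$ against the all-ones vector, whose quotient is the average of $d_u+d_v$ over oriented incidences, equivalently $\tfrac{1}{2n}\sum_v d_v^2 \cdot(\text{correction})$); more sharply, one uses $q(H)\ge \frac{4e(H)}{2n}=\frac{2e(H)}{n}$ together with refinements that detect degree irregularity. From $q(H)\le n$ I would extract $e(H)\le \tfrac{n^2}{2}$, and then, reading $e(G)=n^2-e(H)$, obtain a \emph{lower} bound $e(G)\ge \tfrac{n^2}{2}$, which is exactly the regime controlled by Theorem \ref{ThMoMo}. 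The delicate part is that Moon--Moser requires a minimum-degree hypothesis $\delta(G)\ge k$ that is not assumed here, so a purely edge-counting reduction is insufficient at the boundary; this is why the extremal family $\bigcup_{k=1}^{\lfloor n/2\rfloor}\mathcal{B}_n^k$ and the sporadic graphs $\varGamma_1,\varGamma_2$ appear.

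To handle the equality/near-equality stratum, I would argue directly with the eigenvector. Let $\bix$ be a Perron eigenvector of $Q(H)$ for $q(H)$; since $H$ is bipartite and we are near the threshold $q(H)=n$, the eigenvalue equation $q(H)x_u=d_u x_u+\sum_{v\sim u}x_v$ pins down the degree sequence of $H$ quite rigidly. The goal is to show that unless $H$ is (quasi-complement-)isomorphic to the deletion pattern defining some $\mathcal{B}_n^k$, the inequality $q(H)\le n$ must be strict once $G$ fails to be Hamiltonian. I would combine this with the structural fact, already recorded in the excerpt, that for any spanning subgraph $G$ of $B_n^k$ one has $q(\widehat G)=q(\widehat{B_n^k})$ if and only if $G\in\mathcal{B}_n^k$; this identifies precisely which graphs sit on the spectral boundary and must therefore be listed as exceptions. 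The sporadic cases $\varGamma_1,\varGamma_2$ at $n=4$ are finite and can be dispatched by direct computation of $q(\widehat{\varGamma_i})$.

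The hard part will be the rigidity analysis at equality, i.e.\ showing that $q(\widehat G)\le n$ with $G$ non-Hamiltonian forces $G$ into one of the listed families rather than merely bounding $e(G)$. A clean edge-count reduction to Theorem \ref{ThMoMo} proves \emph{some} non-Hamiltonian $G$ must be extremal, but it does not by itself rule out other graphs achieving $q(\widehat G)=n$, nor does it explain why \emph{no} minimum-degree hypothesis is needed (the remark preceding the theorem signals that the spectral condition already encodes the requisite degree control). I expect the crux to be an eigenvector-based case analysis of which balanced bipartite $H$ with $q(H)=n$ can occur, using the bipartite structure to relate the two halves of $\bix$ and exploiting that the threshold $n$ is attained exactly by the quasi-complements of the $\mathcal{B}_n^k$ family; the finitely many low-order exceptions are then checked by hand.
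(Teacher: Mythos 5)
Your reduction to Moon--Moser's theorem fails quantitatively at its first step, and this is a genuine gap, not a boundary technicality. The Rayleigh-quotient bound you invoke gives only $e(\widehat{G})\leq n^2/2$, i.e.\ $e(G)\geq n^2/2$. But the threshold in Theorem \ref{ThMoMo} is at least $3n^2/4$ for every admissible $k$: the minimum of $n(n-k)+k^2$ over $1\leq k\leq n/2$ is attained at $k=n/2$ and equals $3n^2/4$, and the second term of the maximum is of the same order. So the edge count you can extract is far below anything Moon--Moser can use, even before one worries about the missing minimum-degree hypothesis. Moreover, there really are non-Hamiltonian graphs sitting exactly at your edge bound: for even $n$, the disconnected graph $G=K_{n/2,n/2}+K_{n/2,n/2}$ lies in $\mathcal{B}_n^{n/2}$ (take the graph $H$ in the definition of $\mathcal{B}_n^{n/2}$ to be edgeless), is non-Hamiltonian, satisfies $q(\widehat{G})=q(K_{n/2,n/2})=n$, and has $e(G)=n^2/2$. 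Hence at density $n^2/2$ the Hamiltonian and non-Hamiltonian cases cannot be separated by counting edges, and the entire burden of the proof falls on your ``rigidity analysis at equality,'' which you describe only as an expectation, not an argument. Note also that your eigenvector analysis, even if completed, would treat only graphs with $q(\widehat{G})=n$; you must additionally show that $q(\widehat{G})<n$ forces Hamiltonicity, and nothing in your outline does so.

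The paper's proof rests on three ingredients that your proposal never touches, and they are precisely what closes this gap. First, pass to the bipartite closure $G'=cl_B(G)$ (Theorem \ref{ThBoCh'}); if $G'$ is non-Hamiltonian then, $G'$ being B-closed, every \emph{edge} $xy$ of $\widehat{G'}$ satisfies $d_{\widehat{G'}}(x)+d_{\widehat{G'}}(y)\geq n$ --- this is how degree control appears without any minimum-degree hypothesis. Second, instead of the average-degree bound, one uses the edge-degree-sum bound of Theorem \ref{ThAnMo}, $q(\widehat{G'})\geq\min_{xy\in E(\widehat{G'})}\bigl(d(x)+d(y)\bigr)$, which together with $q(\widehat{G'})\leq q(\widehat{G})\leq n$ sandwiches $q(\widehat{G'})=n$; the equality characterization in Theorem \ref{ThAnMo} (the relevant component is regular or semi-regular bipartite) supplies the structural rigidity you were hoping to get from the eigenvector equation. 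Third, either that component is some $K_{k,n-k}$, whence $G'\subseteq B_n^k$, or it is the unique nontrivial component of $\widehat{G'}$ and every nonadjacent cross pair in $G'$ has degree sum at least $n$, at which point the Ferrara--Jacobson--Powell theorem (Theorem \ref{ThFeJaPo}) --- not Moon--Moser --- produces the exceptional families $\bigcup_{k}\mathcal{B}_n^k$, $\varGamma_1$, $\varGamma_2$. A final monotonicity argument (any proper spanning subgraph of the extremal graphs has quasi-complement with $q>n$) pulls the conclusion back from $G'$ to $G$. Without the closure step and without an equality-characterized lower bound of Anderson--Morley type, your outline has no mechanism to reach these conclusions.
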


\section{Preliminaries}

In this section, we will list our main tools. The first two
subsections contain useful structural theorems for general graphs
and for balanced bipartite graphs, respectively. The last subsection
includes some lower and upper bounds involving (signless Laplacian)
spectral radii of graphs.

\subsection{Structural lemmas for graphs}

The closure theory introduced by Bondy and Chv\'atal
\cite{BC} is a powerful tool for Hamiltonicity of graphs.
Let $G$ be a graph of order $n$. The \emph{closure} of $G$, denoted
by $cl(G)$, is the graph obtained from $G$ by recursively joining
pairs of nonadjacent vertices whose degree sum is at least $n$ until
no such pair remains. Bondy and Chv\'atal \cite{BC} proved
that the closure of $G$ is uniquely determined.

\begin{theorem}[Bondy and Chv\'{a}tal \cite{BC}]\label{ThBoCh}
A graph $G$ is Hamiltonian if and only if $cl(G)$ is Hamiltonian.
\end{theorem}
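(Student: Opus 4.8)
The plan is to follow the classical Bondy--Chv\'atal argument: reduce the theorem to a single edge-addition lemma of Ore type, and then iterate that lemma along the sequence of edges produced by the closure operation. The forward implication needs no work, since $cl(G)$ is a spanning supergraph of $G$ on the same vertex set and adding edges can never destroy an existing Hamilton cycle; thus $G$ Hamiltonian immediately gives $cl(G)$ Hamiltonian. All the content lies in the converse, and the plan is to extract a purely local statement about adding one edge at a time.

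First I would isolate the following claim, which is the core of the proof. Let $H$ be a graph of order $n$ and let $u,v$ be nonadjacent vertices of $H$ with $d_H(u)+d_H(v)\geq n$; then $H$ is Hamiltonian if and only if $H+uv$ is Hamiltonian. One direction is again trivial, so I would assume $H+uv$ is Hamiltonian but $H$ is not. Every Hamilton cycle of $H+uv$ must use the new edge $uv$, for otherwise it would already be a Hamilton cycle of $H$; deleting that edge yields a Hamilton path $P=v_1 v_2\cdots v_n$ of $H$ with $v_1=u$ and $v_n=v$. The key step is then a crossing/rotation argument on $P$. I would set
$$S=\{i:1\leq i\leq n-1,\ v_1 v_{i+1}\in E(H)\},\qquad T=\{i:1\leq i\leq n-1,\ v_i v_n\in E(H)\},$$
so that $|S|=d_H(u)$ and $|T|=d_H(v)$, with both sets contained in $\{1,\dots,n-1\}$. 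Since $|S|+|T|=d_H(u)+d_H(v)\geq n>n-1$, the pigeonhole principle forces $S\cap T\neq\emptyset$; choosing $i\in S\cap T$ gives $v_1 v_{i+1}\in E(H)$ and $v_i v_n\in E(H)$, and then
$$v_1 v_{i+1} v_{i+2}\cdots v_n v_i v_{i-1}\cdots v_2 v_1$$
is a Hamilton cycle of $H$, contradicting the assumption that $H$ is non-Hamiltonian. This proves the claim. The hard part is precisely this reconnection: one must verify that the displayed walk is a genuine spanning cycle. I expect this to be bookkeeping rather than a conceptual obstacle, and it is even simplified by noting that the nonadjacency of $u,v$ rules out $i=1$ and $i=n-1$, so the crossing index always lies safely in the interior $\{2,\dots,n-2\}$.

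Finally I would deduce the theorem by induction along the closure. By definition $cl(G)$ arises from $G$ through a chain $G=G_0\subseteq G_1\subseteq\cdots\subseteq G_m=cl(G)$ in which each $G_{j+1}=G_j+u_jv_j$ is obtained by joining a nonadjacent pair with $d_{G_j}(u_j)+d_{G_j}(v_j)\geq n$. Applying the claim with $H=G_j$ at each step shows that $G_j$ is Hamiltonian if and only if $G_{j+1}$ is Hamiltonian, and chaining these equivalences yields that $G=G_0$ is Hamiltonian if and only if $cl(G)=G_m$ is Hamiltonian. I would remark that the degree-sum threshold is invoked only locally at each step, so the equivalence is insensitive to the order in which edges are added; the uniqueness of $cl(G)$ recorded above is a convenience for speaking of ``the'' closure but is not actually needed for this argument.
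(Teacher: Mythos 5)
Your proposal is correct: the edge-addition claim (Ore's rotation/pigeonhole argument on the Hamilton path $v_1\cdots v_n$ with the sets $S$ and $T$), followed by iteration along the closure sequence, is exactly the classical Bondy--Chv\'atal proof, and every step checks out, including the counts $|S|=d_H(u)$, $|T|=d_H(v)$ and the reconnection into the cycle $v_1 v_{i+1}\cdots v_n v_i\cdots v_2 v_1$. Note that the paper itself states this theorem as a cited result from Bondy and Chv\'atal without giving any proof, so there is no in-paper argument to compare against; your write-up supplies the standard proof that the citation points to.
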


Let $G$ be a graph and $H$ be a subgraph of $G$.
For any vertex $v\in V(G)$, we define $N_{H}(v)=N(v)\cap V(H)$
and $d_{H}(v)=|N_{H}(v)|$. A graph $G$ is \emph{closed} if $G=cl(G)$, i.e., if any two
nonadjacent vertices of $G$ have degree sum less than $n(G)$. Now we
prove a lemma on the clique number of closed graphs.

\begin{lemma}\label{LeomegaG}
Let $G$ be a closed graph of order $n\geq 6k+5$, where $k\geq 1$. If
$$e(G)>\binom{n-k-1}{2}+(k+1)^2,$$
then $\omega(G)\geq n-k$.
\end{lemma}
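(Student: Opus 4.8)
The plan is to argue by contraposition: assume $G$ is closed with $\omega(G)\le n-k-1$, and prove $e(G)\le\binom{n-k-1}{2}+(k+1)^2$. Fix a \emph{maximum} clique $C$ with $|C|=\omega:=\omega(G)$, and write $R=V(G)\setminus C$ and $r=|R|=n-\omega\ge k+1$. I will bound $e(G)=\binom{\omega}{2}+e(C,R)+e(G[R])$, where $e(C,R)$ counts edges between $C$ and $R$ and $G[R]$ is the subgraph induced by $R$.

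The decisive step, and the first place the closure hypothesis enters, is a uniform degree bound on $R$. Since $C$ is a maximum clique, every $u\in R$ has a non-neighbour $w\in C$; as $w$ is joined to the other $\omega-1$ vertices of $C$ we have $d(w)\ge\omega-1$, and since $u$ and $w$ are non-adjacent in the closed graph $G$ we get $d(u)+d(w)\le n-1$. Hence $d(u)\le n-1-(\omega-1)=r$ for \emph{every} $u\in R$. Summing gives $\sum_{u\in R}d(u)\le r^2$, and because this sum equals $2e(G[R])+e(C,R)$ we obtain $e(G[R])+e(C,R)\le r^2$, so that $e(G)\le\binom{\omega}{2}+r^2=\binom{n-r}{2}+r^2=:g(r)$.

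It then remains to check $g(r)\le\binom{n-k-1}{2}+(k+1)^2=g(k+1)$ for all admissible $r$. Equivalently, writing the non-edge count as $\binom n2-g(r)=\tfrac12\,r(2n-3r-1)=:\tfrac12\phi(r)$, I must show $\phi(r)\ge\phi(k+1)$. The function $\phi$ is a downward parabola with maximum at $r=(2n-1)/6$, and the hypothesis $n\ge 6k+5$ forces $k+1<(2n-1)/6$, so $\phi$ is increasing on $[k+1,(2n-1)/6]$; by the symmetry of $\phi$ the inequality $\phi(r)\ge\phi(k+1)$ therefore holds for all $r$ from $k+1$ up to the reflected point near $2n/3$. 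This settles every case in which the clique $C$ is reasonably large.

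The main obstacle is the complementary regime of a \emph{small} clique (roughly $r>2n/3$), where $\phi$ has dropped below $\phi(k+1)$ and the crude bound $g(r)$ no longer suffices. The slack is that $\sum_{u\in R}d(u)\le r^2$ ignores the fact that $G[R]$ has clique number at most $\omega=n-r$, now far smaller than $r$. The plan is to sharpen the estimate of $e(G[R])+e(C,R)$ by feeding in Tur\'an's bound $e(G[R])\le(1-\tfrac1{\omega})\tfrac{r^2}{2}$ together with the cap $e(C,R)\le\omega r$ and, where needed, the pairwise closure inequality $d(u)+d(u')\le n-1$ for non-adjacent $u,u'\in R$; optimising $e(G[R])+e(C,R)$ under these constraints pushes the edge count well below $\binom{n-k-1}{2}+(k+1)^2$ once $r$ is large, with the extreme case $r=n-1$ (an edgeless graph) as a sanity check. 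Carrying out this optimisation and verifying that the resulting one-variable inequality in $r$ stays below the target across the whole range $k+1\le r\le n-1$, uniformly under the single hypothesis $n\ge 6k+5$, is the genuinely technical part: the clean closure argument of the second paragraph does the conceptual work, while this interpolation over small cliques is where the explicit numeric threshold on $n$ is spent.
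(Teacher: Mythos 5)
Your second paragraph is correct and is precisely the paper's argument for the large-clique range: closure against a non-neighbour in the maximum clique gives $d(u)\le r$ for every $u\in R$, hence $e(G)\le\binom{n-r}{2}+r^2$, and the parabola analysis shows this is at most $\binom{n-k-1}{2}+(k+1)^2$ exactly when $k+1\le r\le\frac{2n-1}{3}-(k+1)$. The genuine gap is the small-clique regime, which you leave as a plan — and the plan as stated cannot be completed, because the constraint set you propose to optimise over (the degree bound $e(C,R)+2e(G[R])=\sum_{u\in R}d(u)\le r^2$, Tur\'an's bound on $e(G[R])$, and the cap $e(C,R)\le\omega r$) is provably too weak under the hypothesis $n\ge 6k+5$. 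Concretely, take $k=20$, $n=6k+5=125$, $\omega=12$, $r=113$: the target is $\binom{104}{2}+441=5797$, but your constraints admit $e(G[R])=\frac{r(r-\omega)}{2}=5706.5$ (permitted by Tur\'an, since $(1-\frac{1}{12})\frac{113^2}{2}\approx 5852$) together with $e(C,R)=\omega r=1356$, so the optimisation only yields $e(G)\le\binom{12}{2}+\frac{r^2+\omega r}{2}=7128.5$, far above $5797$. Working out the worst case of this relaxation (it occurs near $\omega\approx\sqrt n$) shows your method needs a threshold of order $n\ge(2k+4)^2$, i.e. quadratic in $k$, not the linear bound $n\ge 6k+5$ that the lemma asserts.

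The missing idea — the paper's key move — is to exploit closure \emph{inside} $R$ through heavy vertices: in a closed graph any two vertices of degree at least $n/2$ are adjacent, so the heavy vertices form a clique, and one chooses $C$ to be a maximum clique \emph{containing all heavy vertices} (such a $C$ is still maximal, so $d_C(u)\le|C|-1$ for $u\notin C$, though it need not be a maximum clique of $G$). Then every $u\in R$ has $d(u)\le(n-1)/2$, and the exact identity $e(G[R])+e(C,R)=\frac{1}{2}\left(\sum_{u\in R}d(u)+\sum_{u\in R}d_C(u)\right)$ gives $e(G)\le\binom{t}{2}+\frac{1}{2}(n-t)\left(\frac{n-1}{2}+t-1\right)=\frac{n+1}{4}t+\frac{n(n-3)}{4}$, a bound linear in $t$ with slope $\frac{n+1}{4}$ that stays below the target for all $t\le n/3+k+1$ once $n\ge 6k+5$; combined with your large-clique case this covers every $t\le n-k-1$. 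Your passing mention of the pairwise closure inequality for non-adjacent $u,u'\in R$ gestures at this, but it is never used quantitatively, and without the heavy-vertex clique observation (or an equivalent) the small-clique case — which, as you yourself note, is where the whole hypothesis $n\ge 6k+5$ is spent — remains unproven.
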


\begin{proof}
A vertex of $G$ is called \emph{heavy} if it has degree at least
$n/2$. Since $G$ is closed, any two heavy vertices are adjacent in
$G$. Let $C$ be the vertex set of a maximum clique of $G$ containing all heavy
vertices and let $H=G-C$. Let $t=|C|$.

Suppose first that $1\leq t\leq n/3+k+1$. Then for every $v\in V(H)$,
we have $d_C(v)\leq t-1$ and $d(v)\leq (n-1)/2$. Note that
$$e(G[C])={t\choose 2} \mbox{ and } e(H)+e(V(H),C)=\frac{\sum_{v\in V(H)}d(v)+\sum_{v\in V(H)}d_C(v)}{2}.$$
Thus
\begin{align*}
e(G)    & =e(G[C])+e(H)+e(V(H),C)\\
        & \leq{t\choose 2}+\frac{(t-1+(n-1)/2)(n-t)}{2}\\
        &=\frac{n+1}{4}t+\frac{n(n-3)}{4}\\
        &\leq\frac{n+1}{4}\left(\frac{1}{3}n+k+1\right)+\frac{n(n-3)}{4}\\
        &=\frac{1}{3}n^2+\left(\frac{1}{4}k-\frac{5}{12}\right)n+\frac{k+1}{4}\\
        &\leq{n-k-1 \choose 2}+(k+1)^2\\
        &<e(G),
\end{align*}
a contradiction.

Suppose now that $(n+3)/3+k<t\leq n-k-1$. Note that $d(v)\leq
n-t$ for every $v\in V(H)$ (for otherwise $v$ will be adjacent to
every vertex of $C$). Since
$$e(G[C])={t\choose 2} \mbox{ and } e(H)+e(V(H),C)\leq\sum_{v\in V(H)}d(v),$$
we have
\begin{align*}
e(G)    & =e(G[C])+e(H)+e(V(H),C)\\
        & \leq{t\choose 2}+(n-t)^2\\
        & =\frac{3}{2}t^2-\left(2n+\frac{1}{2}\right)t+n^2\\
        & \leq{n-k-1 \choose 2}+(k+1)^2\\
        &<e(G),
\end{align*}
also a contradiction.

So we conclude that $t\geq n-k$ and $\omega(G)\geq n-k$.
\end{proof}

Armed with Lemma 1, we prove the following lemma which refines
Erd\H{o}s' theorem (Theorem 1.2) in some sense.

\begin{lemma}\label{LeeGH}
Let $G$ be a graph of order $n\geq 6k+5$, where $k\geq 1$. If
$\delta(G)\geq k$ and
$$e(G)>\binom{n-k-1}{2}+(k+1)^2,$$ then $G$ is Hamiltonian unless
$G\subseteq L^k_n$ or $N^k_n$.
\end{lemma}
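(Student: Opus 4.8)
The plan is to pass to the closure, reduce the problem to a rigid join structure, and then decide Hamiltonicity by a path-cover argument. First I would replace $G$ by its closure $G'=cl(G)$: since the closure only adds edges, $G\subseteq G'$, $\delta(G')\ge\delta(G)\ge k$ and $e(G')\ge e(G)>\binom{n-k-1}{2}+(k+1)^2$, while by Theorem~\ref{ThBoCh} $G$ is Hamiltonian iff $G'$ is. Hence it suffices to show that a closed non-Hamiltonian $G'$ with these parameters equals $L^k_n$ or $N^k_n$, for then $G\subseteq G'$ yields the claim. Lemma~\ref{LeomegaG} applies to $G'$ and produces a clique of order at least $n-k$.

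The heart of the argument is a structural reduction. Let $C$ be a maximum clique of $G'$ and $W=V(G')\setminus C$, so $|W|=n-\omega(G')\le k$. Every $w\in W$ has a non-neighbour $c\in C$ (otherwise $C\cup\{w\}$ is a larger clique); as $c$ lies in a clique of order $\omega(G')\ge n-k$ we get $d(c)\ge\omega(G')-1\ge n-k-1$, and closedness forces $d(w)+d(c)<n$, hence $d(w)<n-\omega(G')+1=|W|+1$. Together with $\delta(G')\ge k\ge|W|$ this pins down $|W|=k$, $\omega(G')=n-k$, and $d(w)=k$ for all $w\in W$. The same inequality shows that any vertex of $C$ with a non-neighbour in $W$ has degree exactly $n-k-1$, i.e.\ it is joined to all of $C$ but to no vertex of $W$. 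Thus $C=C_0\cup C_1$, where $C_1$ is the set of vertices of $C$ adjacent to all of $W$ and $C_0\neq\emptyset$ the set adjacent to none of $W$; consequently $N(w)\cap C=C_1$ for every $w$, and on setting $s=|C_1|$ the graph $W':=G'[W]$ is $(k-s)$-regular on $k$ vertices with
$$G'=K_s\vee\bigl(K_{n-k-s}+W'\bigr),\qquad 1\le s\le k.$$

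It remains to determine for which $s$ and $W'$ this graph is non-Hamiltonian. Writing $\mathrm{pc}(H)$ for the least number of vertex-disjoint paths covering $V(H)$, I would use the elementary equivalence that, on at least three vertices, $K_r\vee H$ is Hamiltonian iff $\mathrm{pc}(H)\le r$ (deleting the $r$ apex vertices from a Hamilton cycle leaves at most $r$ paths, and conversely the apices stitch any path cover into a cycle). Since $\mathrm{pc}$ is additive over disjoint unions and $\mathrm{pc}(K_{n-k-s})=1$, this gives that $G'$ is Hamiltonian iff $\mathrm{pc}(W')\le s-1$. To bound $\mathrm{pc}(W')$ I would apply the equivalence in reverse: in $K_r\vee W'$, which has $k+r$ vertices, any two non-adjacent vertices lie in $W'$ and have degree sum at least $2(k-s)+2r$, so by Ore's theorem $K_r\vee W'$ is Hamiltonian once $2(k-s)+2r\ge k+r$, i.e.\ $r\ge 2s-k$; hence $\mathrm{pc}(W')\le\max\{1,2s-k\}$. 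For $2\le s\le k-1$ this is at most $s-1$, so $G'$ would be Hamiltonian, a contradiction. Therefore $s\in\{1,k\}$: if $s=1$ then $W'$ is $(k-1)$-regular, forcing $W'=K_k$ and $G'=K_1\vee(K_{n-k-1}+K_k)=L^k_n$; if $s=k$ then $W'=kK_1$ and $G'=K_k\vee(K_{n-2k}+kK_1)=N^k_n$.

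The main obstacle is this last step: one must establish the Hamiltonicity criterion for a join $K_r\vee H$ and, above all, show that for every intermediate value $2\le s\le k-1$ the $(k-s)$-regular graph $W'$ is covered by at most $s-1$ paths, as this is the only place where these values of $s$ are ruled out. Reducing the path-cover bound to Ore's classical degree condition through the join equivalence is what keeps this clean; by contrast, the reduction to the join form follows directly from the closure inequality and Lemma~\ref{LeomegaG}, and the two boundary cases $s\in\{1,k\}$ identify the extremal graphs outright.
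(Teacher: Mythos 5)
Your proof is correct, and it shares the paper's skeleton: pass to the closure, invoke Lemma~\ref{LeomegaG}, and pin down the structure $G'=K_s\vee\bigl(K_{n-k-s}+H\bigr)$ with $H$ a $(k-s)$-regular graph on $k$ vertices, $1\le s\le k$ --- the paper derives exactly this decomposition via its ``frontier vertices'', which are your set $C_1$, and it handles the boundary cases $s=1$ and $s=k$ the same way you do. (Two small points you should make explicit: a non-Hamiltonian closed graph is not complete, so $W\neq\emptyset$, which your pinning-down step needs; and your join equivalence requires $r\ge 1$, which does hold in both of your applications.) Where you genuinely diverge is in ruling out $2\le s\le k-1$, which is the real combinatorial content. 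The paper does it by hand: Dirac's theorem gives a path of order at least $k-s+1$ in $H$, and a two-case analysis (according to whether $H$ contains a path of order $k-s+2$) produces an explicit Hamilton cycle of $G'$ threading the $s$ frontier vertices through the vertices and paths of $H$; the second case is somewhat delicate, requiring the observations that $xx'\in E(H)$ and that $H-P'$ contains an edge. You instead encode Hamiltonicity of a join by path covers: $G'$ non-Hamiltonian forces $\mathrm{pc}(H)\ge s$, while applying the same equivalence to $K_r\vee H$ with $r=\max\{1,2s-k\}$ and Ore's theorem yields $\mathrm{pc}(H)\le\max\{1,2s-k\}\le s-1$, a contradiction. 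Your finish is cleaner and less error-prone --- it replaces the explicit stitching by a one-line degree computation --- at the cost of introducing and proving the path-cover/join equivalence (whose two directions you correctly sketch); the paper's construction is longer but self-contained, using nothing beyond Dirac's theorem.
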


\begin{proof}
Let $G'=cl(G)$. If $G'$ is Hamiltonian, then so is $G$ by Theorem
\ref{ThBoCh}. Now we assume that $G'$ is not Hamiltonian. Note that
$\delta(G')\geq\delta(G)$ and $e(G')\geq e(G)$. By Lemma
\ref{LeomegaG}, $\omega(G')\geq n-k$. Let $C$ be a maximum clique of
$G'$ and $H=G'-C$.

We claim that $\omega(G')=n-k$. Suppose that $\omega(G')\geq n-k+1$.
Since $G'$ is not a clique, $V(H)\neq\emptyset$. Let $v$ be a vertex
in $H$. Note that $d_{G'}(u)\geq n-k$ for every $u\in C$ and
$d_{G'}(v)\geq \delta(G')\geq k$. This implies $v$ is adjacent to
every vertex of $C$ in $G'$, contradicting that $C$ is a maximum
clique of $G'$. So $\omega(G')=n-k$, as we claimed.

Note that every vertex in $C$ has degree at least $n-k-1$ in $G'$.
We say that a vertex in $C$ is a \emph{frontier vertex} if it has
degree at least $n-k$ in $G'$, i.e., it has at least one neighbor in
$H$. Let $F=\{u_1,u_2,\ldots,u_s\}$ be the set of frontier vertices.
From the fact that $G'$ is closed, we can see that every vertex in
$H$ has degree exactly $k$ in $G'$, and every vertex in $H$ is
adjacent to every frontier vertex in $G'$. Moreover, since
$|V(H)|=k$, we can see that $1\leq s\leq k$.

If $s=1$, then $H$ is a clique and $G'=L^k_n$; if $s=k$, then
$H$ is an independent set and $G'=N^k_n$. In both cases we have
$G\subseteq L^k_n$ or $N^k_n$. Now we assume that $2\leq s\leq k-1$.
Let $P$ be a Hamilton path of $G'[(C-F)\cup\{u_1,u_s\}]$
from $u_1$ to $u_s$.

Note that every vertex in $H$ has degree $k-s$ in $H$. By Dirac's
theorem \cite{D}, $H$ has a path of order at least $k-s+1$.
First we assume that $H$ has a path $P'$ of order $k-s+2$. Let
$x,x'$ be the two end-vertices of $P'$ and
$V(H-P')=\{v_1,\ldots,v_{s-2}\}$. Then $u_1v_1u_2v_2\cdots
u_{s-2}v_{s-2}u_{s-1}xP'x'u_sPu_1$ is a Hamilton cycle of $G'$, a
contradiction.

Now we assume that $H$ has no paths of order more than $k-s+1$. Let
$P'$ be a path of order $k-s+1$ in $H$, and $x,x'$ be the two end-vertices
of $P'$. Clearly $x$ has no neighbor in $V(H-P')$, which implies
that $xx'\in E(H)$. Since $H$ has no path longer than $P'$, every
vertex in $V(H-P')$ has no neighbor in $P'$, specially, $H-P'$ has
an edge $v_1v_2$. Let $V(H-P')=\{v_1,v_2,\ldots,v_{s-1}\}$. Then
$u_1v_1v_2u_2\cdots v_{s-1}u_{s-1}xP'x'u_sPu_1$ is a Hamilton cycle
of $G'$, also a contradiction.
\end{proof}

We also have an analogue of Lemma \ref{LeeGH} for traceable graphs.

\begin{lemma}\label{LeeGT}
Let $G$ be a graph of order $n\geq 6k+10$, where $k\geq 0$. If
$\delta(G)\geq k$ and
\[
e(G)>\binom{n-k-2}{2}+(k+1)(k+2),
\]
then $G$ is traceable unless $G\subseteq \bL_n^k$ or $\bN_n^k$.
\end{lemma}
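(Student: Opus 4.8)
The plan is to mirror the proof of Lemma \ref{LeeGH} closely, but reduce to it by a \emph{graph transformation} that converts the traceability question into a Hamiltonicity question on a graph of one larger order. Specifically, I would set $G^{*}=K_{1}\vee G$, the graph obtained by adding a new universal vertex $w$ joined to every vertex of $G$. The key observation is that $G$ is traceable (has a Hamilton path) if and only if $G^{*}$ is Hamiltonian: a Hamilton cycle of $G^{*}$ must use $w$ with its two incident cycle-edges going to two vertices of $G$, and deleting $w$ from such a cycle leaves a Hamilton path of $G$; conversely any Hamilton path of $G$ closes up into a Hamilton cycle through $w$. This reduction is what lets me reuse the already-proven structural machinery instead of redeveloping the clique argument from scratch.

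Next I would compute the relevant parameters of $G^{*}$ so as to feed them into Lemma \ref{LeeGH} with $k$ replaced by $k+1$. We have $n(G^{*})=n+1$ and $\delta(G^{*})\geq \delta(G)+1\geq k+1$, and $e(G^{*})=e(G)+n$. The edge hypothesis on $G$, namely $e(G)>\binom{n-k-2}{2}+(k+1)(k+2)$, translates into $e(G^{*})>\binom{n-k-2}{2}+(k+1)(k+2)+n$, and I would check by a routine computation that the right-hand side equals $\binom{(n+1)-(k+1)-1}{2}+((k+1)+1)^{2}$, which is exactly the threshold in Lemma \ref{LeeGH} applied to $G^{*}$ with parameter $k+1$. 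I would likewise verify the order bound: Lemma \ref{LeeGH} needs $n(G^{*})\geq 6(k+1)+5$, i.e. $n+1\geq 6k+11$, i.e. $n\geq 6k+10$, which is precisely the hypothesis here.

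With these identifications in hand, Lemma \ref{LeeGH} applied to $G^{*}$ yields that either $G^{*}$ is Hamiltonian (whence $G$ is traceable and we are done) or $G^{*}\subseteq L^{k+1}_{n+1}$ or $G^{*}\subseteq N^{k+1}_{n+1}$. In the latter cases I would recover the conclusion on $G$ by deleting the universal vertex $w$. Here I would invoke the paper's own definitions: $\bL^k_n$ and $\bN^k_n$ were defined as the graphs obtained from $L^{k+1}_{n+1}$ and $N^{k+1}_{n+1}$ by deleting one vertex of degree $n$. Since $w$ is a vertex of full degree $n$ in $G^{*}$, deleting it from $G^{*}\subseteq L^{k+1}_{n+1}$ gives $G\subseteq \bL^{k}_{n}$, and similarly $G^{*}\subseteq N^{k+1}_{n+1}$ gives $G\subseteq \bN^{k}_{n}$, exactly the desired exceptional structure.

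The step I expect to require the most care is not the logic but the bookkeeping at the two boundaries of the reduction. First, I must confirm that $w$ can legitimately be taken as the degree-$n$ vertex that is deleted: in $L^{k+1}_{n+1}=K_1\vee(K_{k+1}+K_{n-k-1})$ the apex vertex is unique, but in $N^{k+1}_{n+1}=K_{k+1}\vee(K_{n-2k-2}+(k+1)K_1)$ there are several vertices of degree $n$, so I should argue that $w$ (being universal in $G^{*}$) maps onto one of these and that deleting any such vertex yields $\bN^{k}_{n}$, so that the containment $G\subseteq \bN^k_n$ holds regardless. Second, I should double-check the edge-count identity $\binom{n-k-2}{2}+(k+1)(k+2)+n=\binom{n-k-1}{2}+(k+2)^{2}$ rather than assume it, since an off-by-one there would misalign the whole reduction; this is the one genuinely computational point, and it is the linchpin that makes the clean $k\mapsto k+1$, $n\mapsto n+1$ substitution work.
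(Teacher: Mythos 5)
Your proposal is correct and is essentially the paper's own proof: the paper likewise sets $G'=G\vee K_1$, notes $n(G')=n+1\geq 6(k+1)+5$, $\delta(G')\geq k+1$, and $e(G')=e(G)+n>\binom{n-k-1}{2}+(k+2)^2$, and applies Lemma~\ref{LeeGH} with parameter $k+1$ to conclude $G'$ is Hamiltonian unless $G'\subseteq L^{k+1}_{n+1}$ or $N^{k+1}_{n+1}$, hence $G$ is traceable unless $G\subseteq \bL_n^k$ or $\bN_n^k$. Your added verification that the universal vertex must map to a degree-$n$ vertex of the host graph (so that its deletion yields exactly $\bL_n^k$ or $\bN_n^k$) is a correct, slightly more explicit treatment of a step the paper leaves implicit.
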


\begin{proof}
Let $G'=G\vee K_1$. Note that $G$ is traceable if and only if $G'$
is Hamiltonian. We have $n(G')=n+1\geq 6(k+1)+5$, $\delta(G')\geq
k+1\geq 1$ and
\[
e(G')=e(G)+n>\binom{n-k-2}{2}+(k+1)(k+2)+n=\binom{n-k-1}{2}+(k+2)^2.
\]
By Lemma \ref{LeeGH}, $G'$ is Hamiltonian unless $G'\subseteq
L^{k+1}_{n+1}$ or $N^{k+1}_{n+1}$. Thus $G$ is traceable unless
$G\subseteq \bL_n^k$ or $\bN_n^k$.
\end{proof}

We will also use the following result. It was originally related to \cite{AC},
and was strengthened in \cite{FJH}. For details, see Theorem 3.1 in \cite{FJH}.

\begin{theorem}[Ainouche and Christofides \cite{AC}]\label{ThAiCh}
Let $G$ be a non-Hamiltonian graph. If $d(u)+d(v)\geq n-1$ for every
two nonadjacent vertices $u,v\in V(G)$, then either $G=L_n^k$ for
$1\leq k\leq(n-1)/2$, or $n$ is odd and $G\in\mathcal{H}_n$.
\end{theorem}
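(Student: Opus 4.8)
The plan is to reduce to the closed case via the Bondy--Chv\'atal closure and then read off the structure from the complement. First I would set $C=cl(G)$. By Theorem~\ref{ThBoCh}, $C$ is non-Hamiltonian. Since the closure only adds edges, any two vertices $u,v$ nonadjacent in $C$ are nonadjacent in $G$, so $d_C(u)+d_C(v)\ge d_G(u)+d_G(v)\ge n-1$; on the other hand $C$ is closed, so $d_C(u)+d_C(v)\le n-1$. Hence every nonadjacent pair of $C$ has degree sum exactly $n-1$; equivalently, in the complement $\overline{C}$ every edge $uv$ satisfies $d_{\overline{C}}(u)+d_{\overline{C}}(v)=n-1$.

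Next I would exploit this complement condition. Along any path of $\overline{C}$ the degrees alternate between two values $p$ and $n-1-p$, so each component of $\overline{C}$ is either bipartite with the two sides of constant degrees $p$ and $n-1-p$, or it carries an odd closed walk, which forces $p=(n-1)/2$ (so $n$ is odd and the component is $(n-1)/2$-regular). A bipartite component with side-degrees $p,n-1-p$ has at least $p+(n-1-p)=n-1$ vertices, and a $(n-1)/2$-regular component has at least $(n+1)/2$ vertices; since two nontrivial components would already exceed $n$ vertices, $\overline{C}$ has at most one nontrivial component. As $C$ is non-Hamiltonian it is not complete, so $\overline{C}$ has exactly one nontrivial component $D$, the other vertices being isolated in $\overline{C}$ (universal in $C$); writing $U$ for the universal set, $C=K_{|U|}\vee \overline{D}$.

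I would then split on the type of $D$. If $D$ is bipartite, the size bound together with the degree-sum identity forces $|D|=n-1$ and $D$ to be complete bipartite (any deficiency on one side contradicts the degree of the opposite side), so $\overline{D}$ is a union of two cliques; since $C$ is connected, $|U|=1$, giving $C=K_1\vee(K_k+K_{n-k-1})=L_n^k$ for some $1\le k\le(n-1)/2$. If $D$ is non-bipartite, then $n$ is odd and $D$ is $(n-1)/2$-regular, and the goal is to show $D$ is complete, i.e. $D=K_{(n+1)/2}$, so that $C=K_{(n-1)/2}\vee\big((n+1)/2\big)K_1=N_n^{(n-1)/2}$. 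I expect this last step to be the main obstacle: one must exclude non-complete $(n-1)/2$-regular $D$ by proving that then $C=K_{|U|}\vee\overline{D}$ is Hamiltonian. The natural route is that $K_{|U|}\vee\overline{D}$ is Hamiltonian precisely when $\overline{D}$ has a spanning system of at most $|U|$ vertex-disjoint paths, so one needs a good bound on the path-cover number of the $r$-regular graph $\overline{D}$ with $r=|D|-(n+1)/2\ge1$; the delicate configurations are those with very few universal vertices, and the boundary case $|U|=0$ reduces to the Hamiltonicity of a $(n-1)/2$-regular graph on $n$ vertices, which seems to require a separate argument about dense regular graphs.

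Finally I would recover $G$ from $C$. If $C=L_n^k$, then for $G\subsetneq L_n^k$ some vertex $v$ of the $K_k$- or $K_{n-k-1}$-part has smaller degree in $G$ than in $L_n^k$; pairing $v$ with any vertex on the opposite part (always nonadjacent) gives degree sum at most $n-2<n-1$, a contradiction, so every vertex of the two parts is full and $G$ contains all edges incident to them, which are all the edges of $L_n^k$; hence $G=L_n^k$. If $C=N_n^{(n-1)/2}$, let $P$ be the $(n-1)/2$-clique and $I$ the independent set of size $(n+1)/2$; for any two vertices of $I$, which are nonadjacent in $G$, the hypothesis together with $d_G\le(n-1)/2$ forces both to have degree exactly $(n-1)/2$, i.e. to be joined to all of $P$. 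Since $|I|\ge2$ this makes every $I$--$P$ edge present while $I$ stays independent and $P$ is unconstrained, which is exactly $K_{(n-1)/2,(n+1)/2}\subseteq G\subseteq K_{(n-1)/2}\vee\big((n+1)/2\big)K_1$, that is, $G\in\mathcal{H}_n$.
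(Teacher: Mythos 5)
Your reduction is sound as far as it goes: passing to the closure $C=cl(G)$, noting that every nonadjacent pair of $C$ has degree sum exactly $n-1$ (at least $n-1$ by hypothesis and monotonicity, at most $n-1$ since $C$ is closed), translating this into the statement that every edge of $\overline{C}$ has endpoint degree sum $n-1$, and concluding that $\overline{C}$ has a unique nontrivial component $D$ which is either semi-regular bipartite with side-degrees $p$ and $n-1-p$, or non-bipartite, forcing $n$ odd and $D$ to be $(n-1)/2$-regular. The bipartite branch (giving $C=L_n^k$) and the final recovery of $G$ from $C$ in both cases (for $C=L_n^k$ the degree-sum condition forces $G=C$; for $C=N_n^{(n-1)/2}$ it forces $K_{(n-1)/2,(n+1)/2}\subseteq G$, i.e.\ $G\in\mathcal{H}_n$) are correct. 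But the non-bipartite branch is not proved, and you say so yourself: you must show that if $D$ is connected, non-bipartite, $(n-1)/2$-regular and \emph{not} complete, then $C=K_{|U|}\vee\overline{D}$ is Hamiltonian, contradicting non-Hamiltonicity. This is not a deferrable verification; it is the substance of the theorem. Note that every vertex of $V(D)$ has degree exactly $(n-1)/2$ in $C$, so in the boundary case $U=\emptyset$ your claim is precisely the theorem of Nash-Williams that every $k$-regular graph on $2k+1$ vertices is Hamiltonian --- a genuinely hard result whose known proofs require ideas (Chv\'atal--Erd\H{o}s-type arguments, or Jackson's theorem on regular graphs) absent from your sketch --- and the intermediate cases $0<|U|$ need the path-cover analysis of the regular graph $\overline{D}$ that you only gesture at. Such configurations really do occur and must be killed one by one (e.g.\ $n=9$, $U=\emptyset$, $D$ the square of a $9$-cycle), so the gap is not vacuous.

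For the comparison you were asked about: the paper does not prove Theorem \ref{ThAiCh} at all. It is imported as a black box from Ainouche--Christofides \cite{AC}, in the strengthened form of Theorem 3.1 of \cite{FJH}, and is then applied in the proofs of Theorem \ref{ThrhoGC} (and, via its bipartite analogue Theorem \ref{ThFeJaPo}, of Theorems \ref{ThrhoBGBGCqBG} and \ref{ThqBGC}). So there is no in-paper argument to measure your attempt against; judged on its own, your proposal correctly isolates the structure of the extremal graphs but leaves unproven exactly the case that makes the statement a theorem rather than an exercise.
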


\subsection{Structural lemmas for balanced bipartite graphs}

Let $G$ be a balanced bipartite graph of order $2n$. The
\emph{bipartite closure} (or briefly, \emph{B-closure}) of $G$,
denoted by $cl_B(G)$, is the graph obtained from $G$ by recursively
joining pairs of nonadjacent vertices in different partite sets
whose degree sum is at least $n+1$ until no such pair remains.

\begin{theorem}[Bondy and Chv\'{a}tal \cite{BC}]\label{ThBoCh'}
A balanced bipartite graph $G$ is Hamiltonian if and only if
$cl_B(G)$ is Hamiltonian.
\end{theorem}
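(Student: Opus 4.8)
The plan is to reduce the equivalence to a single-edge-addition lemma and then iterate it along a closure sequence. Concretely, I would first establish the following: if $G$ is a balanced bipartite graph of order $2n$ with partite sets $X$ and $Y$, and $u\in X$, $v\in Y$ are nonadjacent vertices with $d(u)+d(v)\geq n+1$, then $G+uv$ is Hamiltonian if and only if $G$ is Hamiltonian. The direction ``$G$ Hamiltonian $\Rightarrow G+uv$ Hamiltonian'' is immediate, as a Hamilton cycle of $G$ survives the addition of an edge. Granting the lemma, the theorem follows at once: choosing a closure sequence $G=G_0\subseteq G_1\subseteq\cdots\subseteq G_m=cl_B(G)$ in which each $G_{i+1}=G_i+u_iv_i$ arises by joining a nonadjacent pair in different partite sets whose degree sum in $G_i$ is at least $n+1$, the lemma gives that $G_i$ is Hamiltonian if and only if $G_{i+1}$ is, and chaining these equivalences yields the claim.

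The heart of the matter is the nontrivial direction of the lemma, for which I would use the classical rotation technique. Suppose $G+uv$ is Hamiltonian but $G$ is not. Then any Hamilton cycle of $G+uv$ must use the edge $uv$ (otherwise it would already lie in $G$), so deleting $uv$ leaves a Hamilton path $P=v_1v_2\cdots v_{2n}$ of $G$ with $v_1=u$ and $v_{2n}=v$. Since $G$ is bipartite with $u\in X$ and $v\in Y$, the vertices alternate so that the odd positions lie in $X$ and the even positions in $Y$. I would then search for a \emph{crossing index}, namely an $i$ such that $u=v_1$ is adjacent to $v_{i+1}$ and $v=v_{2n}$ is adjacent to $v_i$; for such an $i$ the cycle $v_1v_2\cdots v_iv_{2n}v_{2n-1}\cdots v_{i+1}v_1$ is a Hamilton cycle of $G$ that avoids $uv$, contradicting the non-Hamiltonicity of $G$.

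It then remains to produce a crossing index by counting, and this is the step needing the most care. Because $N(u)\subseteq Y$ and $u\not\sim v$, every neighbor of $u$ occupies an even position in $\{2,4,\ldots,2n-2\}$, so $A=\{i\ \mathrm{odd}:v_{i+1}\in N(u)\}$ has exactly $d(u)$ elements and is contained in $\{1,3,\ldots,2n-3\}$; symmetrically, since $N(v)\subseteq X$ and $v\not\sim u$, the set $B=\{i\ \mathrm{odd}:v_i\in N(v)\}$ has exactly $d(v)$ elements and is contained in $\{3,5,\ldots,2n-1\}$. Both $A$ and $B$ are drawn from the common pool $O=\{1,3,\ldots,2n-1\}$ of the $n$ odd indices, so if they were disjoint we would have $d(u)+d(v)=|A|+|B|=|A\cup B|\leq n$, contradicting $d(u)+d(v)\geq n+1$. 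Hence $A\cap B\neq\emptyset$, which supplies the desired crossing index and completes the lemma. The main obstacle is exactly this parity bookkeeping: one must check that removing $uv$ excises precisely the boundary positions ($v_{2n}$ from the admissible positions for $N(u)$ and $v_1$ from those for $N(v)$), so that $|A|=d(u)$ and $|B|=d(v)$ hold with equality and both sets live in the same $n$-element pool, which is what makes the degree-sum threshold $n+1$ just large enough to force an intersection.
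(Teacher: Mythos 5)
Your proof is correct. Note that the paper does not prove this statement at all---it is quoted as a known result of Bondy and Chv\'atal \cite{BC}---so there is no internal proof to compare against; what you have written is the standard closure argument from the literature, and you have executed it accurately. The reduction to the single-edge lemma and the iteration along a closure sequence are exactly right (each added edge satisfies the degree-sum condition in the graph current at the moment of its addition, which is all the lemma needs), and your parity bookkeeping is the step that makes the bipartite threshold $n+1$ work: both index sets $A$ and $B$ live in the $n$-element pool of odd positions, with $|A|=d(u)$ and $|B|=d(v)$ exact because non-adjacency of $u$ and $v$ excludes precisely the two boundary positions, so $d(u)+d(v)\geq n+1$ forces $A\cap B\neq\emptyset$ and the crossing index yields a Hamilton cycle of $G$ avoiding $uv$. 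One small observation worth making explicit (though it costs nothing): a crossing index is automatically odd in the bipartite setting, since $u\sim v_{i+1}$ forces $v_{i+1}\in Y$, so restricting attention to odd $i$ loses no generality.
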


A balanced bipartite graph $G$ of order $2n$ is \emph{B-closed} if
$G=cl_B(G)$, i.e., if every two nonadjacent vertices in distinct
partite sets of $G$ have degree sum at most $n$. We have the
following result on B-closed balanced
bipartite graphs.

\begin{lemma}\label{LezBG}
Let $G$ be a B-closed balanced bipartite graph of order $2n$. If
$n\geq 2k+1$ for some $k\geq 1$ and $$e(G)>n(n-k-1)+(k+1)^2,$$ then
$G$ contains a complete bipartite graph of order $2n-k$.
Furthermore, if $\delta(G)\geq k$, then $K_{n,n-k}\subseteq G$.
\end{lemma}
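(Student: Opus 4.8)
The plan is to pass to the quasi-complement $\widehat{G}$ and phrase everything through its edges (the non-edges of $G$) and degrees. Write $\bar d(v)=n-d_G(v)$ for the degree of $v$ in $\widehat{G}$ (each vertex of $X$ has all of $Y$ as potential $\widehat G$-neighbours, and vice versa). Then the B-closed hypothesis says exactly that every edge $xy$ of $\widehat{G}$ (with $x\in X$, $y\in Y$) satisfies $\bar d(x)+\bar d(y)\ge n$, since $d_G(x)+d_G(y)\le n$; and the edge hypothesis becomes $e(\widehat{G})=n^2-e(G)<(k+1)(n-k-1)$.

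For the first conclusion I would show $\widehat{G}$ has no matching of size $k+1$. If $\{x_iy_i\}_{i=1}^{k+1}$ were such a matching, summing the degree-sum inequality over its edges gives $\sum_i(\bar d(x_i)+\bar d(y_i))\ge (k+1)n$; counting the edges of $\widehat{G}$ incident with the matched vertices by inclusion--exclusion (those meeting $\{x_i\}$, plus those meeting $\{y_i\}$, minus the at most $(k+1)^2$ edges joining the two sets) yields $e(\widehat{G})\ge (k+1)n-(k+1)^2=(k+1)(n-k-1)$, contradicting the edge bound. Hence the maximum matching of $\widehat{G}$ has size at most $k$, and by K\"onig's theorem $\widehat{G}$ has a vertex cover $W$ with $|W|\le k$. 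Deleting $W$ from $G$ destroys every non-edge, so $G-W$ is complete bipartite of order $\ge 2n-k$, and since each part retains at least $n-k\ge k+1$ vertices it contains a complete bipartite graph of order exactly $2n-k$. This step uses only B-closedness, not the minimum degree.

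For the second conclusion the whole point is to make the cover \emph{one-sided}: $K_{n,n-k}\subseteq G$ is equivalent to one part, say $Y$, having at most $k$ vertices of positive $\widehat{G}$-degree (the remaining $\ge n-k$ vertices of $Y$ are then adjacent to all of $X$). So I must prove $\min(p,q)\le k$, where $p,q$ count the positive-$\widehat G$-degree vertices of $X,Y$. First I would record a degree dichotomy: looking at the $\widehat G$-neighbours of a single positive vertex $v$, each forced to have $\widehat G$-degree at least $n-\bar d(v)$, gives $\bar d(v)\,(n-\bar d(v))\le e(\widehat{G})<(k+1)(n-k-1)$, so by concavity every positive vertex is \emph{small} ($\bar d\le k$) or \emph{large} ($\bar d\ge n-k$); with $\delta(G)\ge k$, i.e. $\bar d\le n-k$, a large vertex has $\bar d=n-k$ exactly. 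Since two small vertices have degree sum $\le 2k<n$, no edge of $\widehat{G}$ joins two small vertices, so the large vertices $L_X\cup L_Y$ form a vertex cover; an easy count ($|L_X|\,(n-k)\le e(\widehat G)$) gives at most $k$ large vertices in each part.

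The crux, and the step I expect to be the main obstacle, is the structural claim that if a part contains a large vertex then it contains exactly $k$ of them, forced by the \emph{opposite} part. Indeed a large $x\in X$ has $n-k>k$ non-neighbours in $Y$, more than the number of large vertices there, so one such non-neighbour $y$ is small; being small and non-adjacent to $x$ it has $\bar d(y)=k$ with all $k$ of its $\widehat G$-neighbours large in $X$, whence $|L_X|\ge k$, hence $=k$. Now suppose for contradiction $p,q\ge k+1$. As each part has at most $k$ large vertices, each part has a small vertex; a small vertex of $X$ is non-adjacent only to large vertices of $Y$, forcing a large vertex in $Y$ and hence $|L_Y|=k$, and symmetrically $|L_X|=k$. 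Since every edge of $\widehat{G}$ meets a large vertex, counting gives $e(\widehat{G})=2k(n-k)-(\text{edges of }\widehat G\text{ between }L_X\text{ and }L_Y)$, so
\[
e(\widehat{G})\ \ge\ 2k(n-k)-k^{2}\ >\ (k+1)(n-k-1),
\]
a contradiction for all $k\ge 1$ and $n\ge 2k+1$. Therefore $\min(p,q)\le k$, and the part attaining the minimum yields $K_{n,n-k}\subseteq G$. (The inequality is essentially tight: $B_n^k$ itself has its $k$ large vertices on a single side, which is exactly why one-sidedness, not just a small cover, is what must be proved.)
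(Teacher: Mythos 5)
Your proof is correct, and it takes a genuinely different route from the paper's. The paper argues inside $G$ itself: it counts vertices of degree exceeding $n/2$ to plant a $K_{k+1,k+1}$, grows this by two edge-counting claims to a $K_{t,t}$ with $t\geq n-k$ and then to a $K_{s,t}$ with $s+t\geq 2n-k$ (the first conclusion), and then gets the second conclusion almost for free from this structure: if $K_{n,n-k}\not\subseteq G$ then $t\geq n-k+1$, so B-closedness together with $\delta(G)\geq k$ makes every vertex of $X\setminus X'$ complete to $Y'$, forcing $K_{n,n-k+1}\subseteq G$, a contradiction. You instead work entirely in the quasi-complement, and your first part is shorter and more conceptual than the paper's: a matching of size $k+1$ in $\widehat{G}$ would force $e(\widehat{G})\geq (k+1)n-(k+1)^2$, so K\"onig's theorem hands you a vertex cover of size at most $k$, whose deletion immediately yields the complete bipartite subgraph; this also isolates exactly where the threshold $(k+1)(n-k-1)$ comes from. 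The trade-off is in the second part: the paper's is three lines because it reuses the maximal $K_{s,t}$, whereas yours is self-contained (it never invokes your first part) and has to redo the structure theory --- the small/large degree dichotomy, the claim that one large vertex forces exactly $k$ of them via a small $\widehat{G}$-neighbour on the opposite side, and the final count $e(\widehat{G})\geq 2k(n-k)-k^2$. I checked the inequality $2k(n-k)-k^2>(k+1)(n-k-1)$ on which your contradiction rests: the difference equals $n(k-1)-2k^2+2k+1$, which is $1$ for $k=1$ and at least $k$ for $k\geq 2$ when $n\geq 2k+1$, so the step is sound (note that for $n=2k+1$ the dichotomy is vacuous and your argument in fact shows no \emph{large} vertices exist at all, which causes no harm). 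Your closing observation about $B_n^k$ correctly pinpoints why one-sidedness, rather than a small cover, is the real content of the second conclusion; the paper extracts it from the maximality of $s$ and $t$, you from the location of the large vertices.
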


\begin{proof}
Let $X,Y$ be the two partite sets of $G$. We denote by $h(X)$ and
$h(Y)$ the number of vertices in $X$ and $Y$, respectively, with
degree larger than $n/2$. Then $$nh(X)+\frac{1}{2}n(n-h(X))\geq
e(G).$$ Thus
$$h(X)\geq \frac{2e(G)}{n}-n\geq\frac{2n(n-k-1)+2(k+1)^2+2}{n}-n.$$
One can compute that $h(X)>k$ when $n\geq 2k+1$. Similarly we have
$h(Y)>k$. Clearly every vertex in $X$ with degree more than $n/2$
and every vertex in $Y$ with degree more than $n/2$ are adjacent.
This implies that $K_{k+1,k+1}\subseteq G$. Now let $t$ be the
maximum integer such that $K_{t,t}\subset G$. Thus $t\geq k+1$.

\begin{claim}
$t\geq n-k$.
\end{claim}

\begin{proof}
Suppose not. Then $k+1\leq t\leq n-k-1$. Let $X'\subset X$,
$Y'\subset Y$ such that $G[X'\cup Y']=K_{t,t}$. Recall that $G$ is
B-closed. If for any $x\in X\backslash X'$, there exists $y\in Y'$
such that $xy\notin E(G)$, then for any $x\in X\backslash X'$,
$d(x)\leq n-t$; if there exits $x\in X\backslash X'$, such that for
any $y\in Y'$, $xy\in E(G)$, then for any $y\in Y\backslash Y'$,
$d(y)\leq n-t$. Without loss of generality, assume that for any
$y\in Y\backslash Y'$, $d(y)\leq n-t$. Then
\begin{align*}
e(G)    &=e(X',Y')+e(X\backslash X',Y')+e(X,Y\backslash Y')\\
        &\leq t^2+t(n-t)+(n-t)^2\\
        &=nt+(n-t)^2\\
        &\leq n(n-k-1)+(k+1)^2\\
        &<e(G),
\end{align*}
a contradiction.
\end{proof}

Now let $s$ be a largest integer such that $K_{s,t}\subset G$. Thus
$s\geq t$.

\begin{claim}
$s+t\geq 2n-k$.
\end{claim}

\begin{proof}
Suppose not. Then $n-k\leq t\leq n-(k+1)/2$ and $t\leq s\leq
2n-k-t-1$. Without loss of generality, let $X'\subset X$,
$Y'\subset Y$, such that $G[X',Y']=K_{s,t}$. Then for any $x\in
X\backslash X'$, $d(x)\leq n-s$; and for any $y\in Y\backslash Y'$,
$d(y)\leq n-t$. Thus
\begin{align*}
e(G)    &\leq e(X',Y')+e(X\backslash X',Y)+e(X,Y\backslash Y')\\
        &\leq st+(n-s)^2+(n-t)^2\\
        &=s^2-(2n-t)s+n^2+(n-t)^2\\
        &\leq (2n-k-t-1)^2-(2n-t)(2n-k-t-1)+n^2+(n-t)^2\\
        &=t^2-(2n-k-1)t+n^2+(n-k-1)^2\\
        &\leq(n-k)^2-(2n-k-1)(n-k)+n^2+(n-k-1)^2\\
        &=n(n-k-1)+k^2+k+1\\
        &<e(G),
\end{align*}
a contradiction.
\end{proof}

By Claim 2, $K_{s,t}$ is a complete bipartite graph with order at
least $2n-k$. This completes the proof of the first part.

Suppose that $\delta(G)\geq k$. If $K_{n,n-k}\not\subseteq G$, then
$n-k+1\leq t\leq s\leq n$. Let $X'\subset X, Y'\subset Y$, such that
$G[X',Y']=K_{s,t}$. Then for any $x\in X\backslash X'$, $x$ is
adjacent to any vertex of $Y'$, this implies that $s=n$. Thus
$K_{n,n-k+1}\subseteq G$, a contradiction.
\end{proof}

\begin{lemma}\label{LeeBGH}
Let $G$ be a balanced bipartite graph of order $2n$. If
$\delta(G)\geq k\geq 1$, $n\geq 2k+1$ and
$$e(G)>n(n-k-1)+(k+1)^2,$$ then $G$ is Hamiltonian unless $G\subseteq B_n^k$.
\end{lemma}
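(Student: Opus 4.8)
The plan is to prove this bipartite statement by mimicking the structure of the proof of Lemma \ref{LeeGH}, but replacing the ordinary closure by the bipartite closure. First I would set $G'=cl_B(G)$. By Theorem \ref{ThBoCh'}, $G$ is Hamiltonian if and only if $G'$ is, so it suffices to show that either $G'$ is Hamiltonian or $G'\subseteq B_n^k$; since the $B$-closure only adds edges, $G\subseteq G'$ and the latter conclusion yields $G\subseteq B_n^k$. Note $G'$ is $B$-closed, $\delta(G')\ge\delta(G)\ge k$, and $e(G')\ge e(G)>n(n-k-1)+(k+1)^2$. Assume from now on that $G'$ is not Hamiltonian, and aim to force $G'=B_n^k$.

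The first real step is to locate a large complete bipartite subgraph. Since $G'$ is $B$-closed with $\delta(G')\ge k$ and edge count above the threshold, Lemma \ref{LezBG} applies and gives $K_{n,n-k}\subseteq G'$. Writing $X,Y$ for the partite sets, this means one full side, say $X$, is completely joined to a set of $n-k$ vertices of $Y$. I would then let $U$ be the set of vertices of $Y$ adjacent to all of $X$, and put $Y''=Y\setminus U$ and $m=|Y''|$; then $|U|\ge n-k$, so $m\le k$. If $m=0$ then $G'=K_{n,n}$, which is Hamiltonian as $n\ge 2k+1\ge 3$, a contradiction; hence $m\ge 1$.

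The heart of the argument is the $B$-closed degree-sum condition. Every vertex of $X$ is adjacent to all of $U$, so $d(x)\ge|U|=n-m$ for each $x\in X$. Fixing any $y\in Y''$, by definition $y$ has a non-neighbour $x\in X$, and $B$-closedness gives $d(x)+d(y)\le n$, hence $d(y)\le m$. Combined with $d(y)\ge\delta(G')\ge k$ and $m\le k$, this collapses the chain $k\le d(y)\le m\le k$ and forces $m=k$ and $d(y)=k$ for every $y\in Y''$. Next I would run the ``frontier'' argument: if $x\in X$ has a neighbour in $Y''$, then $d(x)\ge(n-k)+1$, so $x$ can have no non-neighbour in $Y''$, since another non-edge would force $d(x)\le n-k$ via $B$-closedness. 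Thus every such $x$ is joined to all of $Y''$, so the $k$ vertices of $Y''$ share a common neighbourhood $F\subseteq X$ with $|F|=d(y)=k$. Reading off the edges of $G'$ — all of $X\times U$, all of $F\times Y''$, and none between $X\setminus F$ and $Y''$ — then shows $G'=B_n^k$, whence $G\subseteq B_n^k$.

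The step I expect to be most delicate is the one that, in Lemma \ref{LeeGH}, required explicit Hamilton-cycle constructions: handling ``intermediate'' configurations where the deficient vertices are only partially forced. In the general-graph setting the deficient set $H$ could absorb degree through internal edges, producing the whole range $L_n^k,\dots,N_n^k$ and forcing a case analysis on the number of frontier vertices. Here the decisive simplification is that $Y''$ lies inside one partite set and so spans no edges, meaning each $y\in Y''$ must spend all of its degree on $X$; this is exactly what makes $m=k$ and $N(y)=F$ rigid and removes the need for any insertion argument. My main worry is therefore to confirm that the degree bookkeeping genuinely excludes $1\le m\le k-1$ rather than only the extreme configurations, and to check that the boundary $n=2k+1$, where $e(B_n^k)$ only meets the threshold, is consistently handled by the strict edge inequality.
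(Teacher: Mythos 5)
Your proof is correct and follows essentially the same route as the paper's: pass to the $B$-closure $G'$, invoke Lemma \ref{LezBG} to obtain $K_{n,n-k}\subseteq G'$, then use the $B$-closed degree-sum condition to force the deficient vertices of $Y$ to have degree exactly $k$ and to share a common frontier neighbourhood of size $k$, yielding $G'=B_n^k$. The only cosmetic difference is that you define the common neighbourhood $U$ directly and close the chain $k\le d(y)\le m\le k$, whereas the paper takes the maximal $t$ with $K_{n,t}\subseteq G'$ and rules out $t\ge n-k+1$ by observing that $G'$ would then be complete bipartite.
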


\begin{proof}
Let $G'=cl_B(G)$. If $G'$ is Hamiltonian, then so is $G$ by Theorem
\ref{ThBoCh'}. Now we assume that $G'$ is not Hamiltonian. Note that
$\delta(G')\geq\delta(G)$ and $e(G')\geq e(G)$. By Lemma
\ref{LezBG}, $K_{n,n-k}\subseteq G'$. Let $t$ be the largest integer
such that $K_{n,t}\subseteq G$. Clearly $n-k\leq t<n$. Let $X,Y$ be
the partite sets of $G$, and $Y'\subset Y$ such that $G[X\cup Y']=K_{n,t}$.

We claim that $t=n-k$. Suppose that $t\geq n-k+1$. Note that every
vertex in $X$ has degree at least $t\geq n-k+1$ in $G'$ and every
vertex in $Y$ has degree at least $k$. This implies that $G'$ is a
complete bipartite graph, a contradiction. Thus $t=n-k$, as we claimed.

Note that every vertex in $X$ has degree at least $n-k$ in $G'$. We
say here that a vertex in $X$ is a \emph{frontier vertex} if it has
degree at least $n-k+1$ in $G'$, i.e., it has at least one neighbor
in $Y\backslash Y'$. From the fact that $G'$ is closed, we can see
that every vertex in $Y\backslash Y'$ has degree exactly $k$ in
$G'$, and every vertex in $Y\backslash Y'$ is adjacent to every
frontier vertex in $G'$. Thus there are exactly $k$ frontier
vertices in $X$ and $G'=B_n^k$. So $G\subseteq B_n^k$.
\end{proof}

The following result is a balanced bipartite graph version of Theorem \ref{ThAiCh}.

\begin{theorem}[Ferrara, Jacobson, and Powell \cite{FJP}]\label{ThFeJaPo}
Let $G$ be a non-Hamiltonian balanced bipartite graph. If
$d(x)+d(y)\geq n$ for every two nonadjacent vertices $x,y$ in
distinct partite sets, then either
$G\in\bigcup_{k=1}^{n/2}\mathcal{B}_n^k$, or $G=\varGamma_1$ or
$\varGamma_2$ for $n=4$.
\end{theorem}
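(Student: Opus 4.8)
The plan is to first reduce to the \emph{B-closed} case and then read the structure off the quasi-complement. Set $G^{*}=cl_B(G)$. By Theorem~\ref{ThBoCh'}, $G^{*}$ is non-Hamiltonian if and only if $G$ is, and the B-closure only adds edges, so $d_{G^{*}}(x)\ge d_{G}(x)$ for every vertex while the set of cross non-adjacent pairs can only shrink. Hence any pair $x,y$ still non-adjacent in $G^{*}$ was non-adjacent in $G$, so $d_{G^{*}}(x)+d_{G^{*}}(y)\ge d_{G}(x)+d_{G}(y)\ge n$; since $G^{*}$ is B-closed this sum is also at most $n$. Thus in $G^{*}$ \emph{every} cross non-adjacent pair has degree sum exactly $n$. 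This exact equality is the rigidity I want to exploit, so I would work with $G^{*}$ and recover $G$ only at the very end.

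Next I would pass to the quasi-complement $\widehat{G^{*}}$. Writing the two sides as $X,Y$ with $|X|=|Y|=n$, an edge of $\widehat{G^{*}}$ is exactly a cross non-edge of $G^{*}$, and for such a pair the identity $d_{G^{*}}(x)+d_{G^{*}}(y)=n$ translates into $d_{\widehat{G^{*}}}(x)+d_{\widehat{G^{*}}}(y)=n$. So every edge of $\widehat{G^{*}}$ has endpoint degree-sum equal to the constant $n$, and in a connected bipartite graph this forces bi-regularity: all vertices on one side of a component share a degree $a$ and all on the other side degree $n-a$, because the relation propagates along edges. Consequently each nontrivial component of $\widehat{G^{*}}$ is bi-regular, while the isolated vertices of $\widehat{G^{*}}$ are precisely those complete to the opposite side in $G^{*}$. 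A \emph{complete} bipartite component $K_{n-k,k}$ corresponds to deleting a $K_{n-k,k}$ from $K_{n,n}$, i.e.\ to $B_n^{k}$ (one component) or to a block graph such as $K_{k,k}+K_{n-k,n-k}$ (two components), all of which lie in $\bigcup_k\mathcal{B}_n^{k}$. The heart of the argument, and the step I expect to be the main obstacle, is to show that a bi-regular component which is \emph{not} complete bipartite always lets one weave a Hamilton cycle of $G^{*}$ once $n\ge 5$: one threads the surplus edges of $G^{*}$ meeting that component into a spanning cycle, contradicting non-Hamiltonicity. The slack in this weaving disappears only for $n=4$, where a non-complete bi-regular missing component (a $6$-cycle on a $3+3$ subgraph) survives and yields exactly the exceptional graphs $\varGamma_1$ and $\varGamma_2$; these small cases I would settle by direct inspection.

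Finally I would descend from $G^{*}$ to $G$. In the connected case $G^{*}=B_n^{k}$ I use the notation of $\mathcal{B}_n^{k}$: sides $X\cup Y'$ and $Y\cup X'$ with $|X|=|X'|=k$, $|Y|=|Y'|=n-k$, where $X'$ is complete to $X$, $Y'$ is complete to $Y$, and the $X'$--$Y'$ block is deleted. Each vertex of $X'$ is non-adjacent to every vertex of $Y'$ with degree sum exactly $n$, and symmetrically for $Y'$; hence deleting from $G^{*}$ any edge incident to $X'$ or to $Y'$ would drop one of these tight sums below $n$ and violate the hypothesis on $G$. Therefore the only edges of $B_n^{k}$ that $G$ may omit are the $X$--$Y$ edges, and omitting any collection of them keeps every remaining $X$--$Y$ non-edge at degree sum $n+d_H(x)+d_H(y)\ge n$; this is exactly the freedom defining $\mathcal{B}_n^{k}$, so $G\in\mathcal{B}_n^{k}$. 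The analogous bookkeeping in the disconnected cases and in the $n=4$ exceptions completes the classification, matching the statement of Theorem~\ref{ThFeJaPo}.
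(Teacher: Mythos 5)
First, a point of orientation: the paper itself gives no proof of this statement --- it is quoted from Ferrara, Jacobson, and Powell \cite{FJP}, and it is invoked in the proofs of Theorems \ref{ThrhoBGBGCqBG}(3) and \ref{ThqBGC} precisely to dispose of the configuration that your sketch also runs into. So your attempt must stand on its own, and it does not. The sound parts are: the reduction to the B-closure $G^{*}=cl_B(G)$, the observation that every cross non-adjacent pair of $G^{*}$ has degree sum exactly $n$, the consequent semi-regularity of each nontrivial component of $\widehat{G^{*}}$, the classification when all such components are complete bipartite (a single component $K_{a,n-a}$ gives $B_n^{\min(a,n-a)}$; two components are forced to be $K_{a,n-a}$ and $K_{n-a,a}$, giving $K_{a,a}+K_{n-a,n-a}$, which lies in $\mathcal{B}_n^{\min(a,n-a)}$ with $H$ edgeless), and the descent from $G^{*}$ back to $G$ via the tight pairs. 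But the entire content of the theorem is the step you yourself label ``the main obstacle'': that a semi-regular component of $\widehat{G^{*}}$ which is \emph{not} complete bipartite forces $G^{*}$ to be Hamiltonian once $n\geq 5$. For this you offer only the sentence ``one threads the surplus edges of $G^{*}$ meeting that component into a spanning cycle,'' which is a restatement of the claim, not an argument.

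And this step is genuinely nontrivial; it is exactly where \cite{FJP} does its work. If the exceptional component has parts $A\subseteq X$, $B\subseteq Y$ with degrees $d_1$ and $d_2=n-d_1$, non-completeness gives $|A|>d_2$ and $|B|>d_1$, so $|A|+|B|>n$; in $G^{*}$ the vertices of $A$ and $B$ then have degrees $d_2$ and $d_1$, whence $\delta(G^{*})=\min(d_1,d_2)\leq n/2$, while every non-adjacent cross pair has degree sum exactly $n$. Thus both Hamiltonicity criteria available in this circle of ideas --- the Moon--Moser minimum-degree bound $\delta>n/2$ and the closure threshold $d(x)+d(y)\geq n+1$ --- fail by exactly one, and no lemma quoted in the paper applies. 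A concrete test: take $n=6$ and let $\widehat{G^{*}}$ be the $3$-cube $Q_3$ (connected, $3$-regular bipartite on $4+4$ vertices, not complete bipartite) together with four isolated vertices; your argument must produce a Hamilton cycle of $K_{6,6}-E(Q_3)$, and nothing in the sketch indicates how. (Your $n=4$ analysis also needs to rule out a $C_8$ component, not only $C_6$; that case happens to leave a Hamiltonian graph, but it has to be checked.) Until the non-complete case is actually proved, what you have is a correct reduction wrapped around an unproved core, so the proposal does not constitute a proof of the theorem.
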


\subsection{Spectral inequalities}

We will use the following spectral inequalities for graphs and
bipartite graphs.

\begin{theorem}[Nikiforov \cite{N2002}]\label{ThNi'}
Let $G$ be a graph of order $n$ with $\delta(G)\geq k$. Then
$$\rho(G)\leq\frac{k-1}{2}+\sqrt{2e(G)-nk+\frac{(k+1)^2}{4}}.$$
\end{theorem}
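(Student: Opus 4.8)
The plan is to convert the stated bound into an equivalent quadratic inequality and then to establish that quadratic inequality by a Rayleigh-quotient argument based on the Perron--Frobenius eigenvector. Writing $\rho=\rho(G)$ and $e=e(G)$, one checks by completing the square that $\rho\le\frac{k-1}{2}+\sqrt{2e-nk+\frac{(k+1)^2}{4}}$ is equivalent to
$$\rho^2-(k-1)\rho\le 2e-(n-1)k,$$
and, via the factorization $\rho^2-(k-1)\rho-k=(\rho-k)(\rho+1)$ together with $2e-(n-1)k=\sum_{v\in V(G)}(d(v)-k)+k$, to the clean form
$$(\rho-k)(\rho+1)\le\sum_{v\in V(G)}\bigl(d(v)-k\bigr).$$
This reformulation is convenient because the right-hand side is manifestly nonnegative under $\delta(G)\ge k$ and vanishes exactly for $k$-regular graphs, which is precisely the equality case I expect; moreover, since $\rho\ge 2e/n\ge\delta(G)\ge k$, both factors on the left are nonnegative. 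As a preliminary normalization I would note that the right-hand side of the theorem is nonincreasing in the parameter $k$ (an implicit-differentiation check using $\rho\le n-1$), so it suffices to prove the inequality with $k$ replaced by the true minimum degree $\delta(G)$; hence I may assume $k=\delta(G)$ and work with $d(v)\ge k$ throughout.

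For the main inequality I would take $\mathbf{x}$ to be a nonnegative unit Perron eigenvector of the adjacency matrix $A$, restricting to a single connected component realizing $\rho$ so that the relevant entries are positive, and record the two identities $\rho=\mathbf{x}^{T}A\mathbf{x}=\sum_i x_i s_i$ and $\rho^2=\mathbf{x}^{T}A^2\mathbf{x}=\sum_i s_i^2$, where $s_i=\sum_{j\sim i}x_j=\rho x_i$. The idea is to expand $\mathbf{x}^{T}(A^2-(k-1)A)\mathbf{x}=(\rho-k)(\rho+1)+k$ through walks of length two: the diagonal of $A^2$ contributes $\sum_i d(i)x_i^2$, and the off-diagonal contributes a sum over common neighbours. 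I would then estimate the length-two walk contribution by Cauchy--Schwarz applied neighbourhood-by-neighbourhood, using $d(i)\ge k$ and the excluded-vertex count $\sum_{v\notin N(i)}d(v)\ge d(i)+k(n-d(i)-1)$, so that the surplus beyond the regular case is absorbed exactly into $\sum_v(d(v)-k)$.

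The step I expect to be the main obstacle is making this length-two estimate sharp enough. The naive bounds --- replacing each $x_j$ by the maximum entry, or symmetrizing the common-neighbour term as $c_{il}x_ix_l\le c_{il}x_i^2$ --- are each individually valid but overshoot: they reduce the problem to controlling $d(p)$ or $\sum_i d(i)x_i^2$ by $\rho$, and for both quantities the Perron relation forces the inequality in the wrong direction (one only gets $d(p)\ge\rho$ and $\sum_i d(i)x_i^2\ge\rho$). Consequently a single-vertex (maximum-entry) argument cannot attain the sharp constant for $k\ge 2$, and the estimate must be carried out with the full eigenvector, exploiting the ordering $x_1\ge\cdots\ge x_n$ (a majorization/rearrangement of the neighbour sums $s_i$) so that equality propagates only when every $d(i)=k$. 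I would therefore organize the core around the ordered Perron vector, bounding $\sum_i s_i^2$ against $\sum_i d(i)x_i^2$ and the degree deficits simultaneously, and close the argument by verifying that equality in the chain forces $k$-regularity, matching the target $(\rho-k)(\rho+1)=\sum_v(d(v)-k)$.
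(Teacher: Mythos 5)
This theorem is quoted by the paper from Nikiforov \cite{N2002} and no proof of it appears in the paper, so your proposal has to stand on its own; as written, it is not yet a proof. The preliminary reductions are correct and cleanly done: the bound is indeed equivalent to $(\rho-k)(\rho+1)\le\sum_{v\in V(G)}(d(v)-k)$, the reduction to $k=\delta(G)$ is legitimate, and your diagnosis of why the naive estimates fail is accurate (the maximum-entry argument only yields $d(p)\ge\rho$, and $\sum_i d(i)x_i^2=x^TDx\ge x^TAx=\rho$ because $D-A$ is positive semidefinite). But the proposal stops exactly where a proof would have to begin: the length-two walk estimate that you yourself flag as ``the main obstacle'' is never carried out, and the closing plan --- ``bounding $\sum_i s_i^2$ against $\sum_i d(i)x_i^2$ and the degree deficits simultaneously'' using the ordered Perron vector --- names the desired conclusion rather than supplying an inequality that yields it. The central estimate is simply missing.

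Moreover, the criterion by which you intend to close the argument, namely that ``equality in the chain forces $k$-regularity,'' cannot be realized, because the theorem has non-regular equality cases. The complete split graph $G_0=K_k\vee(n-k)K_1$ has $\delta(G_0)=k$, is not $k$-regular when $n>k+1$, and attains equality: $\rho(G_0)=\frac{k-1}{2}+\sqrt{k(n-k)+\frac{(k-1)^2}{4}}$, while $2e(G_0)-nk=k(n-k)-k$, so the right-hand side of the theorem equals $\rho(G_0)$ exactly (for $k=1$ this is the star $K_{1,n-1}$). Any chain of valid inequalities proving the theorem must be tight on $G_0$, so a chain whose combined equality condition is $k$-regularity must contain a false step. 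The idea that is missing is to abandon the Perron vector in favour of a degree-based test vector: let $\rho^*$ denote the right-hand side of the theorem, so that $\rho^*(\rho^*-k+1)=2e(G)-(n-1)k$ (note $2e(G)\ge nk$ makes $\rho^*\ge k$), and set $z_u=d(u)-k+\rho^*+1>0$. Then
\[
(Az)_u=\sum_{w\sim u}\bigl(d(w)-k\bigr)+d(u)(\rho^*+1)\le\bigl(2e(G)-(n-1)k-d(u)\bigr)+d(u)(\rho^*+1)=\rho^*z_u,
\]
where the inequality uses only $d(w)-k\ge0$ and $\sum_{w\sim u}(d(w)-k)\le\sum_{w\ne u}(d(w)-k)$. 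Since $z>0$ and $Az\le\rho^*z$ entrywise, pairing with a nonnegative left Perron eigenvector of $A$ gives $\rho(G)\le\rho^*$, which is the theorem; and this chain is tight on $G_0$, as any correct argument must be.
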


\begin{theorem}[Feng and Yu \cite{FY}]\label{ThFeYu}
Let $G$ be a graph of order $n$. Then $$q(G)\leq
\frac{2e(G)}{n-1}+n-2.$$
\end{theorem}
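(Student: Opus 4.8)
The plan is to work with the Rayleigh quotient for $Q=Q(G)$ and to exploit the complement. Write $m=e(G)$ and let $x\ge 0$ be a unit Perron eigenvector of $Q$, so that $q(G)=x^{T}Qx$. The starting point is the identity
\[
Q(G)+Q(\overline{G})=J+(n-2)I,
\]
which follows from $A(G)+A(\overline{G})=J-I$ and $D(G)+D(\overline{G})=(n-1)I$. Applying it to $x$ yields the exact relation
\[
q(G)=(n-2)+\big(\mathbf{1}^{T}x\big)^{2}-x^{T}Q(\overline{G})x,
\]
where $\mathbf{1}$ is the all-ones vector. Since $Q(\overline{G})$ is positive semidefinite, the last term is nonnegative; simply discarding it already gives $q(G)\le (n-2)+(\mathbf 1^{T}x)^{2}\le 2n-2$, which is the correct value for $K_n$ but carries no information about $m$. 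The whole point is therefore to \emph{retain} $x^{T}Q(\overline{G})x$ and to lower bound it in a way that introduces the number of non-edges $\overline m=\binom{n}{2}-m$. (The degenerate case $\overline m=0$, i.e.\ $G=K_n$, I would check by hand, as well as the reduction to a single nontrivial component so that $x>0$.)

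To bring in the edge count I would bound the complement term by Cauchy--Schwarz across the $\overline m$ non-edges:
\[
x^{T}Q(\overline{G})x=\sum_{uv\in E(\overline{G})}(x_u+x_v)^2\ \ge\ \frac{1}{\overline m}\Big(\sum_{uv\in E(\overline{G})}(x_u+x_v)\Big)^{2}=\frac{\big(\sum_v \overline d(v)\,x_v\big)^{2}}{\overline m},
\]
where $\overline d(v)=n-1-d(v)$. Summing the eigen-equation $Qx=q(G)x$ over all coordinates gives $2\sum_v d(v)x_v=q(G)\,\mathbf 1^{T}x$, so with $s:=\mathbf 1^{T}x$ one has $\sum_v \overline d(v)x_v=s\big(n-1-\tfrac12 q(G)\big)$. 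Substituting back produces the coupled inequality
\[
q(G)\ \le\ (n-2)+s^{2}\Big(1-\frac{\big(n-1-\tfrac12 q(G)\big)^{2}}{\overline m}\Big),
\]
which, with the true value of $s$, becomes an equality for the star $K_{1,n-1}$ (and, in the separately treated boundary case $\overline m=0$, for $K_n$). This is the reassuring sign that the right-hand side carries exactly the right information.

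The main obstacle is closing this coupled inequality into the clean linear bound $q(G)\le \frac{2m}{n-1}+n-2$ together with the equality characterization. The tempting move of inserting $s^{2}\le n$ (Cauchy--Schwarz, since $\|x\|=1$) is too lossy: it is tight only in the near-regular case, overshoots badly at the star, and in fact merely returns $q(G)\le 2n-2$ again. So the endgame needs a genuinely \emph{joint} estimate of $s$ and $q(G)$ --- for instance coupling the displayed inequality with a second, sharper upper bound on $s^{2}$ extracted from $2\sum_v d(v)x_v=q(G)s$ together with $\sum_v d(v)x_v^{2}\le q(G)$, and then optimizing the resulting one-variable expression.

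It is worth recording that the target is equivalent to the purely spectral statement $\sum_{i\ge 2}q_i\ge (n-2)\big(q_1-(n-1)\big)$, where $q_1\ge\cdots\ge q_n$ are the signless Laplacian eigenvalues. The trace identity $\sum_i q_i=2m$ alone cannot establish it (after rearrangement it becomes a tautology), and neither can the easily proved bound $q_i\le n-2$ for $i\ge 2$ (take $U=\mathbf 1^{\perp}$ in the Courant--Fischer characterisation): these give only the edge-count-free conclusion $q_1\le 2n-2$. Hence some eigenvector-level input as above is unavoidable, and I expect essentially all the work, including the equality analysis distinguishing $K_n$ from $K_{1,n-1}$, to lie in that final optimization.
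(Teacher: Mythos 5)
First, a point of comparison: the paper itself gives no proof of Theorem~\ref{ThFeYu} --- it is quoted from Feng and Yu \cite{FY} as a known tool --- so your proposal can only be judged on its own merits. On those merits, your preparatory algebra is correct: the identity $Q(G)+Q(\overline{G})=J+(n-2)I$, the exact relation $q(G)=(n-2)+s^2-x^{T}Q(\overline{G})x$ with $s=\mathbf{1}^{T}x$, the Cauchy--Schwarz step over non-edges, the relation $2\sum_{v}d(v)x_v=q(G)s$, and the equality check at $K_{1,n-1}$ are all right. But the proof stops exactly where the theorem begins: you never pass from your coupled inequality to $q(G)\le 2e(G)/(n-1)+n-2$, and you say yourself that ``essentially all the work'' lies in that final optimization. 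That is not a deferred computation; it is the entire content of the statement.

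Moreover, the gap cannot be closed by the ingredients you put on the table, because those ingredients are jointly consistent with a violation of the theorem. Writing $m=e(G)$, $\overline{m}=\binom{n}{2}-m$, your scalar constraints are
\[
q\le (n-2)+s^2\Bigl(1-\tfrac{(n-1-q/2)^2}{\overline{m}}\Bigr),\qquad s^2\le n,\qquad s^2\le \tfrac{8m}{q},
\]
the last being exactly what Cauchy--Schwarz extracts from your two auxiliary facts $2\sum_v d(v)x_v=qs$ and $\sum_v d(v)x_v^2\le q$. Now take $n=10$, $q=16$, $\overline{m}=12$ (so $m=33$) and $s^2=9$: all three constraints hold, since $8+9\cdot\tfrac{11}{12}=16.25\ge 16$, $9\le 10$, and $9\le \tfrac{264}{16}=16.5$, yet $q=16$ exceeds the claimed bound $\tfrac{2m}{n-1}+n-2=\tfrac{46}{3}\approx 15.33$. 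Hence no one-variable (or any) optimization over the scalars $(q,s,m,n)$ subject to these inequalities can yield the theorem; a completion would have to go back to the eigenvector and extract information beyond those three scalars, which is precisely the missing idea. For orientation, the known proofs of this bound, including the one in \cite{FY}, do not use the complement at all: they argue locally at a vertex, in the spirit of the bound of Theorem~\ref{ThFYYF} that the paper quotes alongside this one, combined with counting edges in and out of a neighborhood. That elementary route does close, and it also delivers the equality cases $K_n$ and $K_{1,n-1}$ that your approach correctly identifies but cannot yet certify.
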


\begin{theorem}[Bhattacharya, Friedland, and Peled \cite{BFP}]\label{ThBhFrPe}
Let $G$ be a bipartite graph. Then
$$\rho(G)\leq \sqrt{e(G)}.$$
\end{theorem}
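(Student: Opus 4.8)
The plan is to exploit the defining symmetry of the spectrum of a bipartite graph, namely that its adjacency eigenvalues occur in $\pm$-pairs. Write $A$ for the adjacency matrix of $G$, with eigenvalues $\lambda_1\geq\lambda_2\geq\cdots\geq\lambda_n$, so that $\rho(G)=\lambda_1$. I first recall the standard trace identity
$$\sum_{i=1}^{n}\lambda_i^2=\operatorname{trace}(A^2)=\sum_{i,j}A_{ij}^2=2e(G),$$
which holds because $A$ is a symmetric $0$-$1$ matrix and $\sum_{i,j}A_{ij}^2$ simply counts the ordered pairs of endpoints of edges.

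Next I would establish the bipartite spectral symmetry. If $X,Y$ are the two parts of $G$ and $D$ is the diagonal $\pm1$ matrix equal to $+1$ on $X$ and $-1$ on $Y$, then $DAD^{-1}=-A$, since every edge of $G$ joins $X$ to $Y$ and thus picks up a single sign change. Hence $A$ and $-A$ are similar and share the same spectrum, so the eigenvalue multiset is symmetric about $0$; in particular the smallest eigenvalue satisfies $\lambda_n=-\lambda_1=-\rho(G)$.

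The conclusion is then immediate. Discarding the (nonnegative) middle terms of the trace identity and using the symmetry,
$$2e(G)=\sum_{i=1}^{n}\lambda_i^2\geq\lambda_1^2+\lambda_n^2=\rho(G)^2+\rho(G)^2=2\rho(G)^2,$$
whence $\rho(G)^2\leq e(G)$ and therefore $\rho(G)\leq\sqrt{e(G)}$.

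An essentially equivalent route uses the block form $A=\left(\begin{smallmatrix}0&B\\B^{\top}&0\end{smallmatrix}\right)$, where $B$ is the biadjacency matrix between $X$ and $Y$: then $\rho(G)$ equals the largest singular value $\sigma_1$ of $B$, and $\sigma_1^2\leq\sum_i\sigma_i^2=\|B\|_F^2=e(G)$. I expect no real obstacle in this argument; the only step meriting care is the bipartite spectral symmetry, which is entirely standard, while the rest reduces to a one-line trace computation and the trivial comparison of a single term against a sum of nonnegative terms.
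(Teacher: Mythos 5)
Your argument is correct, but there is nothing in the paper to compare it against: the paper does not prove this statement at all, it imports it as Theorem~\ref{ThBhFrPe} from Bhattacharya, Friedland and Peled \cite{BFP} and uses it as a black box in the proof of Theorem~\ref{ThrhoBGBGCqBG}(1). So what you have written is a self-contained proof of a cited result, and it is the standard one: the similarity $DAD^{-1}=-A$ does force the adjacency spectrum to be symmetric about $0$, so $\lambda_n=-\lambda_1=-\rho(G)$, and the trace identity $\sum_i\lambda_i^2=2e(G)$ then gives $2e(G)\geq\lambda_1^2+\lambda_n^2=2\rho(G)^2$, as you say. Your alternative route, $\rho(G)=\sigma_1(B)\leq\|B\|_F=\sqrt{e(G)}$ for the biadjacency matrix $B$, is equally valid and is close in spirit to how the bound is obtained in the cited source. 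Two trivial caveats you should acknowledge: discarding the ``middle terms'' presupposes that $\lambda_1$ and $\lambda_n$ are distinct entries of the sum, i.e.\ $n\geq 2$, and the edgeless case needs a separate (one-line) remark; both are immediate, since a bipartite graph with at least one edge has $n\geq 2$, while an edgeless graph has $\rho(G)=0=\sqrt{e(G)}$.
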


\begin{theorem}[Feng and Yu \cite{FY}, Yu and Fan \cite{YF}]\label{ThFYYF}
Let $G$ be a graph with non-empty edge set. Then $$q(G)\leq
\max\left\{d(u)+\frac{\sum_{v\in N(u)}d(v)}{d(u)}: u\in
V(G)\right\}.$$
\end{theorem}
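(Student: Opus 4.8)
The plan is to exhibit a nonnegative matrix that is similar to the signless Laplacian $Q=A+D$ and whose largest row sum is exactly the right-hand side of the claimed inequality; the result then follows from the elementary fact that the spectral radius of a nonnegative matrix never exceeds its largest row sum.

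First I would dispose of vertices of degree zero. Since $G$ has a non-empty edge set we have $q(G)>0$, whereas each isolated vertex contributes only a zero row and column to $Q$, hence only the eigenvalue $0$. Deleting the isolated vertices therefore changes neither $q(G)$ nor the maximum on the right-hand side (which is naturally taken over the vertices $u$ with $d(u)\ge 1$). Thus I may assume $d(u)\ge 1$ for every $u$, so that the degree matrix $D$ is invertible.

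The key step is to consider the matrix $M=D^{-1}QD$. Being similar to $Q$, it has the same eigenvalues, which are real and nonnegative because $Q$ is positive semidefinite ($\mathbf{x}^{T}Q\mathbf{x}=\sum_{uv\in E}(x_u+x_v)^2\ge 0$); consequently the spectral radius of $M$ coincides with the largest eigenvalue of $Q$, that is, $q(G)=\rho(M)$. A direct computation of its entries gives $M_{uu}=d(u)$ and $M_{uv}=d(v)/d(u)$ whenever $u\sim v$ (and $M_{uv}=0$ otherwise), so $M$ is nonnegative and its $u$-th row sum equals
\[
d(u)+\sum_{v\in N(u)}\frac{d(v)}{d(u)}=d(u)+\frac{\sum_{v\in N(u)}d(v)}{d(u)}.
\]
Applying the bound $\rho(M)\le\max_u\sum_v M_{uv}$, valid for any nonnegative matrix, then yields exactly the asserted inequality.

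There is essentially no serious obstacle here: the only point requiring genuine care is the treatment of degree-zero vertices, which is precisely why the hypothesis that $G$ has a non-empty edge set is invoked. An equivalent route avoids the similarity transformation by working with a positive Perron eigenvector $\mathbf{w}$ of $Q$: setting $x_u=w_u/d(u)$ and evaluating the eigenvalue equation $q\,w_u=d(u)w_u+\sum_{v\sim u}w_v$ at a vertex $u^{*}$ maximizing $x_u$, and using $x_v\le x_{u^{*}}$, gives $q\le d(u^{*})+\sum_{v\sim u^{*}}d(v)/d(u^{*})$, the same bound. I would present whichever of the two formulations reads more cleanly in context, with a mild preference for the matrix version since it makes the equality with the claimed expression entirely transparent.
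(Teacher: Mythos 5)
Your proof is correct. Note that the paper offers no proof of this statement to compare against: Theorem \ref{ThFYYF} is quoted from Feng and Yu \cite{FY} and Yu and Fan \cite{YF}, and is used in the paper only as a tool (namely, to derive Theorem \ref{ThLiNi}). Your argument is the standard one for bounds of this degree/average-of-neighbor-degrees type: after discarding isolated vertices (which affect neither $q(G)$ nor the right-hand side, since $G$ has at least one edge), the matrix $M=D^{-1}QD$ is nonnegative, similar to $Q$, and has $u$-th row sum $d(u)+\frac{\sum_{v\in N(u)}d(v)}{d(u)}$, so the bound follows from the fact that the spectral radius of a nonnegative matrix is at most its maximum row sum; all three ingredients (the similarity, the row-sum computation, and the row-sum bound) check out, and your identification of degree-zero vertices as the only delicate point is accurate, since that is exactly where $D^{-1}$ and the expression $\sum_{v\in N(u)}d(v)/d(u)$ would otherwise break down. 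One small caveat on the eigenvector variant you sketch: for disconnected $G$ a \emph{positive} Perron eigenvector of $Q$ need not exist, so one should either restrict to a component attaining $q(G)$ or run the same argument with a nonnegative eigenvector $\mathbf{w}\neq 0$, taking $u^{*}$ to maximize $w_u/d(u)$ (this maximum is automatically positive, and the inequality $x_v\le x_{u^{*}}$ is all that is used). Your primary matrix argument needs no such adjustment.
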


\begin{theorem}\label{ThLiNi}
Let $G$ be a balanced bipartite graph of order $2n$. Then
$$q(G)\leq \frac{e(G)}{n}+n.$$
\end{theorem}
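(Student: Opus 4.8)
The plan is to invoke the neighbourhood-degree bound for the signless Laplacian (Theorem \ref{ThFYYF}) and then reduce the problem to a short arithmetic inequality that exploits the balanced bipartite structure. Write $m=e(G)$ and let $X,Y$ be the two parts, each of size $n$. If $G$ is edgeless the claim is trivial, since then $q(G)=0\le n$, so assume $G$ has an edge. By Theorem \ref{ThFYYF} we have $q(G)\le\max\{d(u)+\frac{1}{d(u)}\sum_{v\in N(u)}d(v):d(u)\ge 1\}$, so it suffices to prove, for every vertex $u$ of positive degree, that $d(u)+\frac{1}{d(u)}\sum_{v\in N(u)}d(v)\le\frac{m}{n}+n$.

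First I would fix such a $u$, say $u\in X$ (the case $u\in Y$ is identical after swapping the two parts, since $\sum_{v\in Y}d(v)=m$ as well), and abbreviate $d=d(u)$ and $S=\sum_{v\in N(u)}d(v)$. The target inequality is equivalent to $S\le\frac{dm}{n}+d(n-d)$. The key observation is that $S$ admits two crude upper bounds which I would combine: since $N(u)\subseteq Y$ and every vertex of $Y$ has at most $n$ neighbours, $S\le dn$; and since the quantity $S$ counts exactly the distinct edges of $G$ meeting $N(u)$, we also have $S\le m$. Hence $S\le\min\{dn,m\}$.

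It then remains to verify the purely arithmetic fact $\min\{dn,m\}\le\frac{dm}{n}+d(n-d)$. Writing $R=\frac{dm}{n}+d(n-d)$, one computes $R-dn=d(\frac{m}{n}-d)$ and $R-m=(n-d)(d-\frac{m}{n})$, so that $dn\le R$ exactly when $dn\le m$, while $m\le R$ exactly when $m\le dn$ (using $d\le n$, which always holds here). Since $dn$ and $m$ are comparable, whichever of the two is smaller is the one that is $\le R$, giving $\min\{dn,m\}\le R$. Substituting back, $d+S/d\le d+\frac{m}{n}+(n-d)=\frac{m}{n}+n$, and taking the maximum over $u$ yields $q(G)\le\frac{e(G)}{n}+n$.

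I expect the main obstacle to be locating the correct auxiliary bound on $S$. Using only the per-vertex estimate $d(v)\le n$ (equivalently $m(u)\le n$) yields the weaker conclusion $q(G)\le 2n$, which is also what the naive Rayleigh-quotient and Weyl-type arguments produce; the improvement to the average-degree term $e(G)/n$ genuinely requires feeding in the global edge count through the second estimate $S\le m$ and balancing it against $S\le dn$ by taking the minimum. Once that combination is spotted, the remaining case analysis is routine.
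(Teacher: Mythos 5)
Your proposal is correct and follows essentially the same route as the paper: both invoke Theorem \ref{ThFYYF} at a vertex $u$ of positive degree and combine the two bounds $\sum_{v\in N(u)}d(v)\le e(G)$ (valid because $N(u)$ lies entirely in one part) and $\sum_{v\in N(u)}d(v)\le nd(u)$, together with $d(u)\le n$. The only difference is cosmetic: the paper packages the arithmetic as the single factorization $\left(n-d(u)\right)\left(1-\frac{\sum_{v\in N(u)}d(v)}{nd(u)}\right)\ge 0$, whereas you bound the neighbourhood-degree sum by $\min\{d(u)n,e(G)\}$ and split into two cases.
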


\begin{proof}
If $G$ is an edgeless graph, then it is trivially true. Now assume
$G$ contains at least one edge. Let $x$ be a vertex in $V(G)$
maximizing the right hand of the formula in Theorem \ref{ThFYYF}. By
Theorem \ref{ThFYYF},
\begin{align*}
n+\frac{e(G)}{n}-q(G)    &\geq \left(n+\frac{\sum_{y\in N(x)}d(y)}{n}\right)-\left(d(x)+\frac{\sum_{y\in N(x)}d(y)}{d(x)}\right)\\
        &=\left(n-d(x)\right)\left(1-\frac{\sum_{y\in N(x)}d(y)}{nd(x)}\right)\\
        &\geq 0.
\end{align*}
This completes the proof.
\end{proof}

The following two theorems can be proved similarly as Lemma 2.1 in
\cite{BZ} due to Berman and Zhang, and Theorem 2 in
\cite{AM} due to Anderson and Morley (see also Proposition 3.9.1 in \cite{BH}), respectively. We
omit the details of the proofs.

\begin{theorem}\label{ThBeZh}
Let $G$ be a graph with non-empty edge set. Then
$$\rho(G)\geq\min\{\sqrt{d(u)d(v)}: uv\in E(G)\}.$$
Moreover, if $G$ is connected, then equality holds if and only if
$G$ is regular or semi-regular bipartite.
\end{theorem}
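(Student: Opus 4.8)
The plan is to bound $\rho(G)$ from below through the Rayleigh quotient, using a degree-weighted test vector. Recall that $\rho(G)=\max_{\mathbf{x}\neq\mathbf{0}}\mathbf{x}^{\top}A\mathbf{x}/\mathbf{x}^{\top}\mathbf{x}$, where $A$ is the adjacency matrix of $G$. I would take the test vector $\mathbf{x}=(x_v)_{v\in V(G)}$ with $x_v=\sqrt{d(v)}$; since $G$ has a non-empty edge set, this vector is nonzero. For this choice $\mathbf{x}^{\top}A\mathbf{x}=2\sum_{uv\in E(G)}\sqrt{d(u)d(v)}$ and $\mathbf{x}^{\top}\mathbf{x}=\sum_{v\in V(G)}d(v)=2e(G)$.

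Writing $m=\min\{\sqrt{d(u)d(v)}:uv\in E(G)\}$, this yields
$$\rho(G)\geq\frac{\sum_{uv\in E(G)}\sqrt{d(u)d(v)}}{e(G)}\geq\frac{e(G)\cdot m}{e(G)}=m,$$
which is the desired inequality; the second step only uses that every summand is at least $m$. This establishes the bound with no additional hypotheses.

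For the equality characterization I assume $G$ is connected, so $A$ is irreducible and the Perron eigenvector is positive. If $\rho(G)=m$, the squeeze in the displayed chain forces both inequalities to be equalities; in particular $\sqrt{d(u)d(v)}=m$, that is $d(u)d(v)=m^2$, for every edge $uv$. I would then run a two-colouring argument on the degrees: fix a vertex $u$ of degree $a$; each neighbour has degree $m^2/a$, and each neighbour of such a neighbour again has degree $a$. By connectivity only the two degree values $a$ and $m^2/a$ occur; if $a\neq m^2/a$ every edge joins the two degree classes, so $G$ is bipartite and biregular, i.e.\ semi-regular bipartite, while if $a=m^2/a$ then $G$ is regular. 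Conversely, if $G$ is $r$-regular then $\mathbf{x}$ is constant and $\rho(G)=r=m$, and if $G$ is semi-regular bipartite with part-degrees $r_1,r_2$ then $d(u)d(v)=r_1r_2$ on every edge and a direct check gives $(A\mathbf{x})_u=\sum_{v\sim u}\sqrt{d(v)}=d(u)\cdot m/\sqrt{d(u)}=m\sqrt{d(u)}$, so $A\mathbf{x}=\sqrt{r_1r_2}\,\mathbf{x}$ and $\rho(G)=\sqrt{r_1r_2}=m$.

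The main obstacle is the equality analysis rather than the inequality. One must verify that when $d(u)d(v)=m^2$ along every edge the positive vector $\mathbf{x}=(\sqrt{d(v)})$ is a genuine eigenvector of $A$ with eigenvalue $m$, so that, by the uniqueness and positivity of the Perron eigenvector for connected $G$, this eigenvalue is in fact the spectral radius and not merely a lower bound. The other delicate point is that the local condition ``$d(u)d(v)$ is constant along edges'' must be propagated through the whole connected graph to produce exactly the regular / semi-regular bipartite dichotomy, which is precisely what the two-colouring argument above accomplishes.
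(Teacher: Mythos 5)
Your proof is correct and complete: the Rayleigh-quotient bound with test vector $x_v=\sqrt{d(v)}$, the squeeze forcing $d(u)d(v)=m^2$ on every edge, the degree-alternation (two-colouring) argument yielding the regular versus semi-regular bipartite dichotomy, and the Perron--Frobenius certification that the positive eigenvector gives $\rho(G)=m$ in the converse are all sound. Note that the paper does not actually write out a proof of this statement---it only remarks that it ``can be proved similarly as Lemma 2.1 in \cite{BZ}'' and omits the details---and your argument is precisely the Rayleigh-quotient/Perron--Frobenius technique that citation refers to, so your write-up is, if anything, more complete than what the paper provides.
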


\begin{theorem}\label{ThAnMo}
Let $G$ be a graph with non-empty edge set. Then
$$q(G)\geq\min\{d(u)+d(v): uv\in E(G)\}.$$
Moreover, if $G$ is connected, then the equality holds if and only if
$G$ is regular or semi-regular bipartite.
\end{theorem}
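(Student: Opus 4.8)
The plan is to reduce the signless Laplacian spectral radius of $G$ to the adjacency spectral radius of its line graph $L(G)$, where the quantity $\min\{d(u)+d(v):uv\in E(G)\}$ appears naturally as two more than the minimum degree. Let $B$ be the vertex--edge incidence matrix of $G$, so that $B$ has a $1$ in position $(w,e)$ exactly when $w$ is an end-vertex of $e$. A direct computation gives $Q=BB^{\mathsf T}=D+A$, while $B^{\mathsf T}B=2I+A(L(G))$, where $A(L(G))$ is the adjacency matrix of the line graph. Since $BB^{\mathsf T}$ and $B^{\mathsf T}B$ share all nonzero eigenvalues and $G$ has an edge (so $q(G)\geq 2>0$), I would deduce the identity
\[
q(G)=2+\rho(L(G)).
\]

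With this identity in hand, the inequality is immediate from the elementary lower bound for the adjacency spectral radius of any graph with a vertex, namely $\rho(H)\geq \bar d(H)\geq \delta(H)$, where $\bar d(H)$ denotes the average degree (the first inequality via the Rayleigh quotient of the all-ones vector, the second trivially). Applying this to $H=L(G)$ and noting that the vertex of $L(G)$ corresponding to the edge $uv$ has degree $d(u)+d(v)-2$, so that $\delta(L(G))=\min\{d(u)+d(v):uv\in E(G)\}-2$, I obtain
\[
q(G)=2+\rho(L(G))\geq 2+\delta(L(G))=\min\{d(u)+d(v):uv\in E(G)\},
\]
as desired.

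For the equality characterization under the assumption that $G$ is connected, I would trace the two inequalities above. When $G$ is connected and has an edge, $L(G)$ is connected, so $\rho(L(G))=\delta(L(G))$ forces $\bar d(L(G))=\delta(L(G))$ via the sandwich $\delta\leq\bar d\leq\rho$, which holds if and only if $L(G)$ is regular. Thus equality in the theorem is equivalent to $L(G)$ being regular, i.e., to $d(u)+d(v)$ being constant over all edges $uv\in E(G)$. The remaining step is the classical structural fact that, for connected $G$, the edge-sum $d(u)+d(v)$ is constant precisely when $G$ is regular (value $2r$) or semi-regular bipartite (value $a+b$, with the two sides having degrees $a$ and $b$); conversely each of these clearly yields a regular line graph.

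The routine parts are the matrix identities and the average-degree bound. The main obstacle---really the only point requiring care---is the equality analysis: one must justify that $\rho(H)=\delta(H)$ for connected $H$ forces regularity (which follows from $\delta\leq\bar d\leq\rho$ together with the equality conditions of the Rayleigh quotient), and then convert ``$L(G)$ regular'' into the clean dichotomy ``regular or semi-regular bipartite'', while also checking the degenerate cases (a single edge, or $G$ a disjoint union of edges) so that the boundary behaviour of the identity $q(G)=2+\rho(L(G))$ is fully accounted for.
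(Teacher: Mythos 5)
Your proposal is correct, and it follows essentially the route the paper itself indicates: the paper gives no written proof of this theorem, saying only that it can be proved in the manner of Anderson and Morley's Theorem 2 (see also Proposition 3.9.1 in Brouwer--Haemers), and that method is precisely your factorization $Q=BB^{T}$, $B^{T}B=2I+A(L(G))$ through the line graph, followed by an elementary spectral lower bound and the equality analysis via regularity of $L(G)$. The only immaterial difference is that you bound $\rho(L(G))$ from below by the average degree (Rayleigh quotient with the all-ones vector), whereas the Anderson--Morley template bounds the spectral radius of the nonnegative matrix $B^{T}B$ by its extreme row sums (Perron--Frobenius); both give the same conclusion, including the conversion of ``$L(G)$ regular'' into the regular/semi-regular-bipartite dichotomy for connected $G$.
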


Let $G$ be a graph and $u,v\in V(G)$. We construct a new graph $G'$
in the following way: for every $w\in
N(u)\backslash(N(v)\cup\{v\})$, replace the edge $uw$ by a new edge
$vw$. The above operation of graphs, introduced by Kelmans
\cite{Ke}, is called the Kelmans operation. (See pp.44 in \cite{BH}.) Wu, Xiao, and Hong \cite{WXH}
proved that the spectral radius of a connected graph increases
under the Kelmans operation. For general graphs, the similar result was obtained by Csikv\'ari
\cite{C} later, independently. For connected graphs, a similar observation
also holds for the signless Laplacian spectral radius
under the Kelmans operation, see Feng \cite{F}.

\begin{theorem}\label{ThCs}
Let $G$ be a graph and $G'$ be a graph obtained from $G$ by a Kelmans
operation. Then \\
(1) (Wu, Xiao, and Hong \cite{WXH}, Csikv\'ari \cite{C}) $\rho(G')\geq\rho(G)$; and\\
(2) $q(G')\geq q(G)$.
\end{theorem}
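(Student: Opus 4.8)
The plan is to prove both inequalities through the Rayleigh variational characterization, exploiting the edge-based quadratic form of the signless Laplacian. Recall that for any $\mathbf{y}\in\mathbb{R}^{n}$ one has the identity $\mathbf{y}^{T}Q(G)\mathbf{y}=\sum_{ij\in E(G)}(y_i+y_j)^2$, and consequently $q(G)=\max_{\|\mathbf{y}\|=1}\sum_{ij\in E(G)}(y_i+y_j)^2$, the maximum being attained at a nonnegative top eigenvector $\mathbf{x}$ of $Q(G)$ (which, when $G$ is connected, is strictly positive and unique up to scaling by Perron--Frobenius, since $Q(G)$ is then irreducible and nonnegative). The whole strategy rests on the observation that the Kelmans operation alters only the edges incident to the moved set $S:=N(u)\setminus(N(v)\cup\{v\})$: it deletes $\{uw:w\in S\}$ and inserts $\{vw:w\in S\}$, while leaving the edge $uv$ (if present) and every edge $uw$ with $w\in N(u)\cap N(v)$ untouched; no multi-edge is created because $w\notin N(v)$ for $w\in S$.

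First I would fix the Perron vector $\mathbf{x}$ of $Q(G)$ and arrange, by naming as $v$ the vertex with the larger Perron weight, that $x_v\geq x_u$. Evaluating the quadratic form on the two graphs and cancelling the common edges, the only surviving terms come from $S$, so that
\begin{align*}
\mathbf{x}^{T}Q(G')\mathbf{x}-\mathbf{x}^{T}Q(G)\mathbf{x}
 &=\sum_{w\in S}\bigl[(x_v+x_w)^2-(x_u+x_w)^2\bigr]\\
 &=(x_v-x_u)\sum_{w\in S}(x_u+x_v+2x_w).
\end{align*}
Since $\mathbf{x}\geq 0$ and $x_v\geq x_u$, the right-hand side is nonnegative, whence by the Rayleigh principle $q(G')\geq\mathbf{x}^{T}Q(G')\mathbf{x}\geq\mathbf{x}^{T}Q(G)\mathbf{x}=q(G)$, which is part~(2). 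Part~(1) is entirely parallel: using instead $\mathbf{y}^{T}A(G)\mathbf{y}=2\sum_{ij\in E(G)}y_iy_j$ and the adjacency Perron vector, the same cancellation gives $\mathbf{x}^{T}A(G')\mathbf{x}-\mathbf{x}^{T}A(G)\mathbf{x}=2(x_v-x_u)\sum_{w\in S}x_w\geq 0$, and the Rayleigh principle finishes it.

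The delicate point is the sign of $x_v-x_u$, that is, the claim that the operation may be taken in the direction that moves edges toward the heavier vertex. For connected $G$ this is clean: Perron--Frobenius furnishes a positive eigenvector, and one simply labels the endpoints so that $x_v\geq x_u$; the conclusion $q(G')\geq q(G)$ then holds for that labelling, which is exactly how the inequality is used when normalizing extremal graphs. The honest care needed is (a) to record that one is choosing this favorable orientation rather than an arbitrary one, and (b) to cover the possibly disconnected case, where one takes a nonnegative (rather than strictly positive) top eigenvector; the quadratic-form computation above is unchanged, and the Rayleigh lower bound $q(G')\geq\mathbf{x}^{T}Q(G')\mathbf{x}$ requires no positivity, so the argument survives verbatim once the orientation is fixed. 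I expect pinning down this orientation convention, rather than any computation, to be the only real obstacle.
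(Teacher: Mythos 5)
Your proposal is correct and is essentially the paper's own proof: both take a nonnegative unit top eigenvector of $Q(G)$, use the direction-symmetry of the Kelmans operation to assume WLOG that edges move toward the vertex with the larger eigenvector entry, compare the quadratic forms on the affected edges, and finish by the Rayleigh principle. The only cosmetic differences are that you package the change via the identity $\mathbf{y}^{T}Q\mathbf{y}=\sum_{ij\in E}(y_i+y_j)^2$ while the paper splits it into the $\mathbf{x}^{T}A\mathbf{x}$ and $\mathbf{x}^{T}D\mathbf{x}$ increments (algebraically the same terms), and that you also write out part (1), which the paper attributes to Wu--Xiao--Hong and Csikv\'ari rather than reproving.
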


\begin{proof}
For the convenience of readers, we write the detailed proof of (2) here.
Let $A$ and $A'$ be the adjacency matrices, and $D$
and $D'$ be the degree matrices, of $G$ and $G'$, respectively. Let
$(A+D)\bix=q(G)\bix$, where $\bix\geq 0$ and $\bix^T\bix=1$. For two
vertices $u$ and $v$ corresponding to the Kelmans operation, without
loss of generality, let $x_u\geq x_v$. Set
$W=N(v)\backslash(N(u)\cup\{u\})$. Then
\begin{align*}
q(G')   &\geq\bix^T(A'+D')\bix\\
        &=\bix^TA'\bix+\bix^TD'\bix\\
        &=\bix^TA\bix+2(x_u-x_v)\sum_{w\in W}x_w+\bix^TD\bix+(x_u^2-x_v^2)|W|\\
        &\geq \bix^T(A+D)\bix\\
        &=q(G).
\end{align*}
Thus the inequality holds.
\end{proof}

\begin{lemma}\label{LeCompare}
\begin{align*}
(1)\ &\rho(\bL_n^0)=\rho(K_{n-1})=n-2,  q(\bL_n^0)=q(K_{n-1})=2n-4,
\rho(\overline{\bL_n^0})=\rho(K_{1,n-1})=\sqrt{n-1}.
\end{align*}\\[-20mm]
\begin{align*}
(2)\ &\rho(L_n^1)>\rho(K_{n-1})=n-2,  q(L_n^1)>q(K_{n-1})=2n-4,
\rho(\overline{L_n^1})=\rho(K_{1,n-2})=\sqrt{n-2}.
\end{align*}\\[-20mm]
\begin{align*}
(3) \mbox{ For } k\geq 1,\ & \rho(\bN_n^k)>\rho(\bL_n^k)=\rho(K_{n-k-1})=n-k-2,\\
    & q(\bN_n^k)>q(\bL_n^k)=q(K_{n-k-1})=2n-2k-4, \mbox{ and}\\
    & \rho(\overline{\bN_n^k})\geq\rho(\overline{\bL_n^k})=\rho(K_{k+1,n-k-1})=\sqrt{(k+1)(n-k-1),}\ \mbox{with equality \ \ \ } \\
            & \mbox{only if } n \mbox{ is even and } k=n/2-1.
\end{align*}\\[-20mm]
\begin{align*}
(4) \mbox{ For } k\geq 2,\ & \rho(N_n^k)>\rho(L_n^k)>\rho(K_{n-k})=n-k-1,\\
    & q(N_n^k)>q(L_n^k)>q(K_{n-k})=2n-2k-2, \mbox{ and}\\
    & \rho(\overline{N_n^k})\geq\rho(\overline{L_n^k})=\rho(K_{k,n-k-1})=\sqrt{k(n-k-1),}\ \mbox{with equality only if \ \ \ } \\
            & n \mbox{ is odd and } k=(n-1)/2.
\end{align*}\\[-20mm]
\begin{align*}
(5) \mbox{ For } k\geq 1,\ & \rho(B_n^k)>\rho(K_{n,n-k})=\sqrt{n(n-k)},\ q(B_n^k)>q(K_{n,n-k})=2n-k, \mbox{\ \ \ \ \ \ \ \ \ }\\
    & \rho(\widehat{B_n^k})=\rho(K_{k,n-k})=\sqrt{k(n-k)}, \ q(\widehat{B_n^k})=q(K_{k,n-k})=n.
\end{align*}
\end{lemma}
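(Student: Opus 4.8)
The plan is to reduce every quantity in the five parts to one of a few standard spectral values, and then handle the strict comparisons separately, by monotonicity or by a Rayleigh estimate. First I would record $\rho(K_m)=m-1$, $q(K_m)=2m-2$, $\rho(K_{a,b})=\sqrt{ab}$, and, since $K_{a,b}$ is bipartite (so its signless Laplacian and Laplacian spectra agree), $q(K_{a,b})=a+b$; stars are the case $a=1$, and adjoining isolated vertices changes neither $\rho$ nor $q$. Using $\overline{G_1\vee G_2}=\overline{G_1}+\overline{G_2}$ and $\overline{G_1+G_2}=\overline{G_1}\vee\overline{G_2}$ I would identify each complement and quasi-complement explicitly: $\overline{\bL_n^0}=K_{1,n-1}$, $\overline{L_n^1}$ is $K_{1,n-2}$ with one isolated vertex, $\overline{\bL_n^k}=K_{k+1,n-k-1}$, $\overline{L_n^k}$ is $K_{k,n-k-1}$ with one isolated vertex, $\widehat{B_n^k}=K_{k,n-k}$ with isolated vertices, while $\overline{\bN_n^k}$ and $\overline{N_n^k}$ are each a complete split graph (a clique joined to an independent set) together with isolated vertices. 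This yields all the clique and complete-bipartite values in the statement at once.

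For the strict inequalities whose smaller side is a clique or complete-bipartite value, I would use the fact that a connected graph has strictly larger $\rho$ and $q$ than any of its proper subgraphs (Perron--Frobenius). Indeed $L_n^1\supsetneq K_{n-1}$, and $\bN_n^k\supsetneq K_{n-k-1}$ while $\bL_n^k$ is disconnected with largest component $K_{n-k-1}$, so $\rho(\bL_n^k)=\rho(K_{n-k-1})$ and $q(\bL_n^k)=q(K_{n-k-1})$; likewise $L_n^k\supsetneq K_{n-k}$ and $B_n^k\supsetneq K_{n,n-k}$. As each supergraph is connected, all of these comparisons are strict.

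For the complement comparisons I would use the equitable partition of a complete split graph into its clique and its independent set, so that $\rho(\overline{\bN_n^k})$ and $\rho(\overline{N_n^k})$ are the largest roots of the $2\times2$ quotient matrices $\left(\begin{smallmatrix}k & n-2k-1\\ k+1 & 0\end{smallmatrix}\right)$ and $\left(\begin{smallmatrix}k-1 & n-2k\\ k & 0\end{smallmatrix}\right)$ respectively. Comparing each root with $\sqrt{(k+1)(n-k-1)}$, respectively $\sqrt{k(n-k-1)}$, and squaring, the inequality collapses after cancellation to the single condition $n\ge 2k+2$, respectively $n\ge 2k+1$; this holds throughout the admissible range, with equality exactly at the boundary $n=2k+2$ (that is, $n$ even and $k=n/2-1$), respectively $n=2k+1$ ($n$ odd and $k=(n-1)/2$), which is the stated equality case.

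The genuinely delicate comparisons are $\rho(N_n^k)>\rho(L_n^k)$ and $q(N_n^k)>q(L_n^k)$ for $k\ge2$, and these are the main obstacle. Note that Kelmans' operation (Theorem \ref{ThCs}) is unavailable here: since $e(N_n^k)=e(L_n^k)+\binom{k}{2}$, the two graphs have different numbers of edges and so are not linked by edge-preserving Kelmans moves. Instead I would argue by a Rayleigh estimate. Both graphs consist of a base $K_{n-k}$ with $k$ extra vertices; I map the base of $L_n^k$ onto that of $N_n^k$ and the attached $k$-clique of $L_n^k$ onto the independent $k$-set of $N_n^k$, and transport the Perron vector of $A(L_n^k)$ (resp.\ of $Q(L_n^k)$) to a unit vector $\mathbf{x}$ on $V(N_n^k)$. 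Writing $\alpha,\beta$ for the Perron values on the attached $k$-set and on the non-central part of the base, the map gains $k(k-1)$ adjacent pairs and loses $\binom{k}{2}$, so $\rho(N_n^k)\ge\mathbf{x}^{T}A(N_n^k)\mathbf{x}=\rho(L_n^k)+k(k-1)\alpha(2\beta-\alpha)$. The eigenequations of $L_n^k$ give $\alpha/\beta=(\rho-n+k+2)/(\rho-k+1)<2$ (using only $\rho>3k-n$), so the extra term is positive and $\rho(N_n^k)>\rho(L_n^k)$. For the signless Laplacian the same transport contributes, in addition, a degree correction $k(k-1)\beta^2$ from the $k-1$ base vertices that become universal in $N_n^k$, giving $q(N_n^k)\ge q(L_n^k)+k(k-1)(\beta^2+2\alpha\beta-\alpha^2)$; here $\alpha<\beta$ (again from the eigenequations, now using $n>2k+1$) makes the bracket positive, so $q(N_n^k)>q(L_n^k)$.
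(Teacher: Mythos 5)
Your proposal is correct, and for the routine parts --- (1), (2), the non-complement half of (3), and (5) --- it coincides with the paper's proof, which disposes of them by strict monotonicity of $\rho$ and $q$ under edge deletion in connected graphs. For the complement inequalities in (3) and (4), your quotient-matrix computation is essentially the paper's computation in different clothing: the paper quotes the explicit formula for $\rho(K_k\vee(n-2k)K_1)$ from \cite{CDS} and compares it with $\sqrt{k(n-k-1)}$ after squaring, and both arguments collapse to $n\geq 2k+1$ (resp.\ $n\geq 2k+2$) with the stated equality cases. In fact you treat (3) more carefully than the paper does, since the complement inequality in (3) does not follow from edge-deletion monotonicity alone; one either computes as you do, or observes that $\overline{\bN_n^k}$ and $\overline{\bL_n^k}$ are $\overline{N_{n+1}^{k+1}}$ and $\overline{L_{n+1}^{k+1}}$ up to an isolated vertex, so that (3) is a corollary of (4). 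Where you genuinely diverge is the pair $\rho(N_n^k)>\rho(L_n^k)$ and $q(N_n^k)>q(L_n^k)$: the paper performs $k-1$ Kelmans operations on $L_n^k$ to obtain a \emph{proper subgraph} of $N_n^k$ and then invokes Theorem \ref{ThCs} together with strict subgraph monotonicity for connected graphs, whereas you transport the Perron vector along the natural bijection and estimate Rayleigh quotients. Your bookkeeping is right ($k(k-1)$ gained pairs of weight $\alpha\beta$, $\binom{k}{2}$ lost pairs of weight $\alpha^2$, degree correction $k(k-1)\beta^2$ in the $Q$-case), and so are the eigenvector-ratio bounds; your route is longer but self-contained (it never needs Theorem \ref{ThCs}) and yields an explicit lower bound on the spectral gap, while the paper's is a two-line reduction to machinery it develops anyway.

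Two corrections. First, your side remark that the Kelmans approach is ``unavailable'' because $e(N_n^k)=e(L_n^k)+\binom{k}{2}$ is mistaken: Kelmans moves do preserve edge count, but that only rules out reaching $N_n^k$ itself; the paper reaches a \emph{proper subgraph} of $N_n^k$, and your own edge count shows there is room for exactly $\binom{k}{2}$ missing edges. (This misstatement does not damage your proof, which never uses Kelmans.) Second, in the $Q$-estimate you justify $\beta^2+2\alpha\beta-\alpha^2>0$ via $\alpha<\beta$, which by your own derivation requires $n>2k+1$; but the lemma must also cover the admissible boundary $n=2k+1$, which is precisely the equality case in (4). There your eigenequations give $\alpha=\beta$, so the bracket equals $2\beta^2>0$ and the conclusion survives --- all that is actually needed is $\alpha<(1+\sqrt{2})\beta$ --- but as written your justification excludes this case; patch that one line.
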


\begin{proof}
(1)--(5) other than (4) can be deduced by the fact that the (signless
Laplacian) spectral radius decreases after deleting an edge in
connected graphs.

Now we prove (4).
It is not difficult to see that if we do $k-1$ Kelmans operations
on $L_n^k$ ($k\geq 2$), then we can obtain a proper subgraph of
$N_n^k$. By Theorem \ref{ThCs}, $\rho(N_n^k)>\rho(L_n^k)$ and
$q(N_n^k)>q(L_n^k)$. In the following we will prove
$\rho(\overline{N_n^k})\geq\rho(\overline{L_n^k})=\sqrt{k(n-k-1)}$
for $k\geq 2$, with equality only if $n$ is odd and $k=(n-1)/2$.

Note that $\overline{N_n^k}=K_k\vee(n-2k)K_1$. From Theorem 2.8 in
\cite{CDS}, we have the formula
$$\rho(K_k\vee
(n-2k)K_1)=\frac{k-1+\sqrt{4k(n-k)-(3k-1)(k+1)}}{2}.$$ Thus
\begin{align*}
\left(2\rho(\overline{L_n^k})-(k-1)\right)^2    &=4k(n-k-1)+(k-1)^2-4(k-1)\sqrt{k(n-k-1)}\\
        &\leq 4k(n-k-1)+(k-1)^2-4k(k-1)\\
        &=4k(n-k)-(3k-1)(k+1)\\
        &=\left(2\rho(\overline{N_n^k})-(k-1)\right)^2.
\end{align*}
Since $\rho(\overline{N_n^k})\geq (k-1)/2$, $\rho(\overline{N_n^k})\geq\rho(\overline{L_n^k})$.
Note that the equality holds only if either $k=1$ (which is not in our
assumption) or $n$ is odd and $k=(n-1)/2$.

The proof is complete.
\end{proof}

\section{Proofs of the theorems}

\textbf{Proof of Theorem \ref{ThrhoG}.}

(1) By Lemma \ref{LeCompare} and Theorem \ref{ThNi'},
$$n-k-2<\rho(G)\leq\frac{k-1}{2}+\sqrt{2e(G)-nk+\frac{(k+1)^2}{4}}.$$
Thus, when $n\geq(k^2+7k+8)/2$, we have
\begin{align*}
e(G)>\frac{n^2-(2k+3)n+2(k+1)^2}{2}\geq\binom{n-k-2}{2}+(k+1)(k+2).
\end{align*}
By Lemma \ref{LeeGT}, $G$ is traceable
or $G\subseteq \bL_n^k$ or $\bN_n^k$. But if $G\subseteq \bL_n^k$
for $k\geq 1$ or $G\subset \bN_n^k$, then $\rho(G)<\rho(\bN_n^k)$, a
contradiction. Thus $G=\bN_n^k$.

(2) By Lemma \ref{LeCompare} and Theorem \ref{ThNi'},
$$n-k-1<\rho(G)\leq\frac{k-1}{2}+\sqrt{2e(G)-nk+\frac{(k+1)^2}{4}}.$$
Thus, when $n\geq(k^2+6k+4)/2$, we have
\begin{align*}
e(G)>\frac{n^2-(2k+1)n+k(2k+1)}{2}\geq\binom{n-k-1}{2}+(k+1)^2.
\end{align*}
By Lemma \ref{LeeGH}, $G$ is Hamiltonian
or $G\subseteq L_n^k$ or $N_n^k$. But if $G\subseteq L_n^k$ for
$k\geq 2$ or $G\subset N_n^k$, then $\rho(G)<\rho(N_n^k)$, a
contradiction. Thus $G=N_n^k$. The proof is complete. {\hfill$\Box$}

\vskip\baselineskip\noindent\textbf{Proof of Theorem \ref{ThrhoGC}.}

(1) The proof is based on the assertion (2), which will be proved
later. Let $G'=G\vee K_1$. Then $n(G')=n+1$,
$\delta(G')=\delta(G)+1\geq k+1$ and
$$\rho(\overline{G'})=\rho(\overline{G})\leq\rho(\overline{\bL_n^k})=\sqrt{(k+1)(n-k-1)}=\rho(\overline{L_{n+1}^{k+1}}).$$
By (2), $G'$ is Hamiltonian unless $G'=L_{n+1}^{k+1}$, or
$n+1=2(k+1)+1$ and $G\in\mathcal{H}_{n+1}$. Thus $G$ is traceable
unless $G=\bL_n^k$, or $n=2k+2$ and $G\in\mathcal{H}_n$.

(2) Let $G'=cl(G)$. If $G'$ is Hamiltonian, then so is $G$ by
Theorem \ref{ThBoCh}. Now we assume that $G'$ is not Hamiltonian.
Note that $G'$ is closed. Thus every two nonadjacent vertices $u,v$
have degree sum at most $n-1$, i.e.,
$$d_{\overline{G'}}(u)+d_{\overline{G'}}(v)\geq 2(n-1)-(n-1)=n-1.$$
Note that every non-trivial component of $\overline{G'}$ has a
vertex of degree at least $(n-1)/2$ and hence of order at least
$(n+1)/2$. This implies that $\overline{G'}$ has exactly one
nontrivial component. Since $d(u)\geq k$ and $d(v)\geq k$, we have
$d_{\overline{G'}}(u)\leq n-k-1$ and $d_{\overline{G'}}(v)\leq
n-k-1$. Thus $d_{\overline{G'}}(u)\geq k$ and
$d_{\overline{G'}}(v)\geq k$. This implies that
$$d_{\overline{G'}}(u)d_{\overline{G'}}(v)\geq d_{\overline{G'}}(u)(n-1-d_{\overline{G'}}(u))\geq k(n-k-1),$$
with equality if and only if (up to symmetry),
$d_{\overline{G'}}(u)=k$ and $d_{\overline{G'}}(v)=n-k-1$.

By Lemma \ref{LeCompare} and Theorem \ref{ThBeZh},
$$\sqrt{k(n-k-1)}\geq\rho(\overline{G})\geq\rho(\overline{G'})
\geq\min_{uv\in
E(\overline{G'})}\sqrt{d_{\overline{G'}}(u)d_{\overline{G'}}(v)}\geq\sqrt{k(n-k-1)}.$$
This implies that $\rho(\overline{G'})=\sqrt{k(n-k-1)}$ and there is
an edge $uv\in E(\overline{G'})$ such that $d_{\overline{G'}}(u)=k$
and $d_{\overline{G'}}(v)=n-k-1$. Let $H$ be the component of
$\overline{G'}$ containing $uv$. By Theorem \ref{ThBeZh}, $H$ is
regular or semi-regular bipartite. This implies that every two
nonadjacent vertices in $G'$ have degree sum $n-1$. By Theorem
\ref{ThAiCh}, $G'=L_n^k$ or $n=2k+1$ and $G'\in\mathcal{H}_n$. It is
easy to find that for any (spanning) subgraph of $L_n^k$ or any
(spanning) subgraph of a graph in $\mathcal{H}_n$ (when $n=2k+1$),
if it is not $L_n^k$ or is not in $\mathcal{H}_n$, then it has the
complement with spectral radius greater than
$\rho(\overline{L_n^k})$. Thus $G=L_n^k$ or $n=2k+1$ and
$G\in\mathcal{H}_n$. The proof is complete. {\hfill$\Box$}

\vskip\baselineskip\noindent\textbf{Proof of Theorem \ref{ThqG}.}

(1) By Lemma \ref{LeCompare} and Theorem \ref{ThFeYu},
$$2n-2k-4<q(G)\leq\frac{2e(G)}{n-1}+n-2.$$ Thus, when $n\geq(3k^2+9k+8)/2$, we have
\begin{align*}
e(G)    &>\frac{(n-1)(n-2k-2)}{2}\\
        &=\frac{n^2-(2k+3)n+2(k+1)}{2}\\
        &\geq\binom{n-k-2}{2}+(k+1)(k+2).
\end{align*}
By Lemma \ref{LeeGT}, $G$ is traceable
or $G\subseteq \bL_n^k$ or $\bN_n^k$. But if $G\subseteq \bL_n^k$
for $k\geq 1$ or $G\subset \bN_n^k$, then $q(G)<q(\bN_n^k)$, a
contradiction. Thus $G=\bN_n^k$.

(2) By Lemma \ref{LeCompare} and Theorem \ref{ThFeYu},
$$2n-2k-2<q(G)\leq\frac{2e(G)}{n-1}+n-2.$$
Thus, when $n\geq(3k^2+5k+4)/2$, we have
\begin{align*}
e(G)    &>\frac{(n-1)(n-2k)}{2}\\
        &=\frac{n^2-(2k+1)n+2k}{2}\\
        &\geq\binom{n-k-1}{2}+(k+1)^2.
\end{align*}
By Lemma \ref{LeeGH}, $G$ is
Hamiltonian or $G\subseteq L_n^k$ or $N_n^k$. But if $G\subseteq
L_n^k$ for $k\geq 2$ or $G\subset N_n^k$, then $q(G)<q(N_n^k)$, a
contradiction. Thus $G=N_n^k$. The proof is complete. {\hfill$\Box$}

\vskip\baselineskip\noindent\textbf{Proof of Theorem
\ref{ThrhoBGBGCqBG}.}

(1) By Lemma \ref{LeCompare} and Theorem \ref{ThBhFrPe},
$$\sqrt{n(n-k)}<\rho(G)\leq\sqrt{e(G)}.$$
Thus, we obtain
\begin{align*}
e(G)>n(n-k)\geq n(n-k-1)+(k+1)^2
\end{align*}
when $n\geq (k+1)^2$. By Lemma \ref{LeeBGH}, $G$ is Hamiltonian or $G\subseteq B_n^k$. But
if $G\subset B_n^k$, then $\rho(G)<\rho(B_n^k)$, a contradiction.
Thus $G=B_n^k$.

(2) By Lemma \ref{LeCompare} and Theorem \ref{ThLiNi},
$$2n-k<q(G)\leq \frac{e(G)}{n}+n.$$
Thus, there holds
\begin{align*}
e(G)>n(n-k)\geq n(n-k-1)+(k+1)^2
\end{align*}
when $n\geq (k+1)^2$. By Lemma \ref{LeeBGH}, $G$ is Hamiltonian or $G\subseteq B_n^k$. But
if $G\subset B_n^k$, then $q(G)<q(B_n^k)$, a contradiction. Thus
$G=B_n^k$.

(3) Let $G'=cl_B(G)$. If $G'$ is Hamiltonian, then so is $G$ by
Theorem \ref{ThBoCh'}. Now we assume that $G'$ is not Hamiltonian.
Note that $G'$ is B-closed. Thus every two nonadjacent vertices
$x\in X,y\in Y$ in distinct partite sets $X,Y$ have degree sum at
most $n$, i.e.,
$$d_{\widehat{G'}}(x)+d_{\widehat{G'}}(y)\geq 2n-n=n.$$
Since $\delta(G')\geq\delta(G)\geq k$, we can see that
$d_{\widehat{G'}}(x)\leq n-k$ and $d_{\widehat{G'}}(y)\leq n-k$.
Thus $d_{\widehat{G'}}(x)\geq k$ and $d_{\widehat{G'}}(y)\geq k$.
This implies that $$d_{\widehat{G'}}(x)d_{\widehat{G'}}(y)\geq d_{\widehat{G'}}(x)(n-d_{\widehat{G'}}(x))\geq
k(n-k),$$ with equality if and only if (up to symmetry)
$d_{\widehat{G'}}(x)=k$ and $d_{\widehat{G'}}(y)=n-k$. By Theorem
\ref{ThBeZh},
$$\sqrt{k(n-k)}\geq\rho(\widehat{G})\geq\rho(\widehat{G'})\geq\min_{xy\in
E(\widehat{G'})}\sqrt{d_{\widehat{G'}}(x)d_{\widehat{G'}}(y)}\geq\sqrt{k(n-k)}.$$
This implies that $\rho(\widehat{G'})=\sqrt{k(n-k)}$ and there is an
edge $xy\in E(\widehat{G'})$ such that $d_{\widehat{G'}}(x)=k$ and
$d_{\widehat{G'}}(y)=n-k$. Let $H$ be the component of
$\widehat{G'}$ containing $xy$. By Theorem \ref{ThBeZh}, $H$ is a
semi-regular bipartite graph, say, with partite sets $X'\subseteq
X$ and $Y'\subseteq Y$, and for every vertex $x'\in X'$,
$d(x')=d(x)=k$, and for every vertex $y'\in Y'$, $d(y')=d(y)=n-k$.
If $H=K_{k,n-k}$, then $G'\subseteq B_n^k$. If $H\neq K_{k,n-k}$,
then $n(H)>n$. Note that every nontrivial component of
$\widehat{G'}$ has order at least $(n+1)$. Thus $H$ is the unique
non-trivial component of $\widehat{G'}$. This implies that every two
nonadjacent vertices in distinct partite sets in $G'$ have degree
sum at least $n$. By Theorem \ref{ThFeJaPo}, $G'\in\mathcal{B}_n^k$
or $G'=\varGamma_1$ or $\varGamma_2$ for $n=4$ and $k=2$. In any
case, we can see that $G\subseteq B_n^k$ or $G\subseteq\varGamma_1$
or $\varGamma_2$ for $n=4$ and $k=2$. Note that for any (spanning)
subgraph of $\varGamma_1$, $\varGamma_2$ or $B_n^k$, if is not
$\varGamma_1$, or $\varGamma_2$, or a graph in $\mathcal{B}_n^k$,
then it has the quasi-complement with spectral radius greater than
$\rho(\widehat{B_n^k})$. Thus $G\in\mathcal{B}_n^k$ or
$G=\varGamma_1$ or $\varGamma_2$ for $n=4$ and $k=2$. The proof is
complete. {\hfill$\Box$}

\vskip\baselineskip\noindent\textbf{Proof of Theorem \ref{ThqBGC}.}

Let $G'=cl_B(G)$. If $G'$ is Hamiltonian, then so is $G$ by Theorem
\ref{ThBoCh'}. Now assume that $G'$ is not Hamiltonian. Similarly
as the proof of Theorem \ref{ThrhoBGBGCqBG}, for every two
nonadjacent vertices $x\in X,y\in Y$ in distinct partite sets
$X,Y$ of $G'$, we get
$$d_{\widehat{G'}}(x)+d_{\widehat{G'}}(y)\geq n.$$
By Theorem \ref{ThAnMo}, we have
$$n\geq q(\widehat{G})\geq q(\widehat{G'})\geq\min_{xy\in E(\widehat{G'})}(d(x)+d(y))\geq n,$$
This implies that $q(\widehat{G'})=n$ and there is an edge $xy\in
E(\widehat{G'})$ such that
$d_{\widehat{G'}}(x)+d_{\widehat{G'}}(y)=n$. Let $H$ be the
component of $\widehat{G'}$ containing $xy$. By Theorem
\ref{ThAnMo}, $H$ is a semi-regular bipartite graph, say, with
partite sets $X'\subseteq X$ and $Y'\subseteq Y$. If $H$ is a
complete bipartite graph $K_{k,n-k}$ for some $k$, then $G'\subseteq
B_n^k$. Otherwise, $n(H)>n$. Note that every nontrivial component of
$\widehat{G'}$ has order at least $n$. Thus $H$ is the unique
non-trivial component of $\widehat{G'}$. This implies that every two
nonadjacent vertices in distinct partite sets in $G$ have degree
sum at least $n$. By Theorem \ref{ThFeJaPo},
$G'\in\bigcup_{k=1}^{n/2}\mathcal{B}_n^k$, or $G'=\varGamma_1$ or
$\varGamma_2$ for $n=4$. In any case, we can see that $G\subseteq
B_n^k$ for $1\leq k\leq n/2$, or $G\subseteq\varGamma_1$ or
$\varGamma_2$ for $n=4$. Note that every (spanning) subgraph of
$\varGamma_1$, $\varGamma_2$ or $B_n^k$, $1\leq k\leq n/2$, if is
not $\varGamma_1$ or $\varGamma_2$, or a graph in
$\mathcal{B}_n^k$, then has the quasi-complement with signless Laplacian
spectral radius greater than $n$. Thus
$G\in\bigcup_{k=1}^{n/2}\mathcal{B}_n^k$, or $G=\varGamma_1$ or
$\varGamma_2$ for $n=4$. The proof is complete. {\hfill$\Box$}

\section*{Acknowledgements}

Part of this work has been done, while the second author was
visiting University of Science and Technology of China on March of
2015. He is grateful for the outstanding hospitality of Professor
Jie Ma. He is also grateful to Dr. Jun Ge for many invaluable
discussions.

The authors are very grateful to two anonymous referrers for their invaluable
suggestions which largely improve the presentation of the paper, especially
for one referee who points out that for connected graphs, Theorem 2.12 originally appeared in \cite{WXH,F},
and recommends the related paper \cite{HZ} to them.

\end{document}